\documentclass[leqno,11pt]{article}

\usepackage[margin=2.5cm]{geometry}
\usepackage{amsmath}
\usepackage{amsfonts}
\usepackage{array}
\usepackage{enumerate}
\usepackage{graphicx}
\usepackage{amssymb}
\usepackage{amsthm}
\usepackage[sort&compress,comma,square,numbers]{natbib}
\usepackage{xcolor}
\usepackage{chemarr}
\usepackage{hyperref}

\usepackage[version=3]{mhchem}
\usepackage{chemfig} 
\usepackage{babel}
\usepackage{soul}


\newtheorem{theorem}{Theorem}[section]
\newtheorem{proposition}[theorem]{Proposition}
\newtheorem{lemma}[theorem]{Lemma}
\newtheorem{corollary}[theorem]{Corollary}

\theoremstyle{definition}
\newtheorem{definition}[theorem]{Definition}
\newtheorem{remark}[theorem]{Remark}
\newtheorem{example}[theorem]{Example}

\newcommand{\R}{\mathbb{R}}
\newcommand{\N}{\mathbb{N}}

\DeclareMathOperator{\diag}{diag}
\DeclareMathOperator{\im}{im} 
\DeclareMathOperator{\supp}{supp}

\renewcommand{\k}{\kappa}

\newcommand{\norm}[2][\relax]{\ifx#1\relax \ensuremath{\left\Vert#2\right\Vert} \else \ensuremath{\left\Vert#2\right\Vert_{#1}}\fi}


\definecolor{NiceBlue}{rgb}{0.2,0.2,0.75}

\begin{document}

\title{Critical parameters for singular perturbation reductions of chemical reaction networks}

\author{Elisenda Feliu\\Department of Mathematical Sciences, University of Copenhagen\\Universitetsparken 5, 2100
Copenhagen, Denmark\\ \tt{efeliu@math.ku.dk}\\
\\
Sebastian Walcher\\
Lehrstuhl A f\"ur Mathematik, RWTH Aachen\\
52056 Aachen, Germany\\
\tt{walcher@matha.rwth-aachen.de}\\
\\
Carsten Wiuf\\Department of Mathematical Sciences, University of Copenhagen\\Universitetsparken 5, 2100
Copenhagen, Denmark\\ \tt{wiuf@math.ku.dk}\\
}


\maketitle
\begin{abstract} 
We are concerned with polynomial ordinary differential systems that arise from modelling chemical reaction networks. For such systems, which may be of high dimension and may depend on many parameters, it is frequently of interest to obtain a reduction of dimension in certain parameter ranges. Singular perturbation theory, as initiated by Tikhonov and Fenichel, provides a path toward such reductions. In the present paper we discuss parameter values  that lead to singular perturbation reductions (so-called Tikhonov-Fenichel parameter values, or TFPVs).  An algorithmic approach is known, but it is feasible for small dimensions only. Here we characterize conditions for classes of reaction networks for which TFPVs  arise by turning off reactions  (by setting rate parameters to zero), or by removing certain species (which relates to the classical quasi-steady state approach to model reduction).
In particular, we obtain definitive results for the class of complex balanced reaction networks (of deficiency zero) and  first order reaction networks.
\end{abstract}

{\bf MSC (2020):} 92C45, 34E15, 80A30

\medskip
{\bf Key words}: Reaction networks, dimension reduction, invariant sets, critical manifold, quasi-steady state.

\section{Introduction}
The modelling of chemical reaction networks  frequently leads to high-dimensional parameter depen\-dent systems of ordinary differential equations (ODEs).  Even in the presence of a well-established structure theory for large classes of reaction networks, reducing the dimension of such systems is desirable for several reasons: From a quantitative perspective in the laboratory, parameter identification is frequently unfeasible for the full system but {might be} possible for a reduced equation. The Michaelis-Menten system and generalizations can be seen as examples of this; see e.g.\ Segel and Slemrod \cite{SSl}, Keener and Sneyd \cite{KeSn}. From a qualitative vantage point, one strategy to prove special features such as the existence of periodic solutions, or multistationarity, is to prove such features for a reduced system and show that they persist for the full system in some parameter range. For a recent example of this strategy, see \cite{FRW}. {Thus it is of general interest to identify parameter domains where a systematic reduction is possible.}

Typically (although not exclusively) the reduction procedures are based on singular perturbation theory as developed by Tikhonov \cite{tikh} and Fenichel \cite{fenichel}. In the present paper, we will discuss singular perturbation reductions and critical parameters that permit reductions of this kind.  The focus will be on characterizing such critical parameters that correspond naturally to structural features of the chemical reaction network.

A frequently used approach to finding appropriate parameters for singular perturbation scenarios goes back to a classical paper by Heineken et al.\ \cite{hta}. The method relies on an adroit scaling of suitable variables (based on an intuitive understanding of the processes in the reaction network) and ideally leads to a system with slow and fast variables to which Tikhonov's and Fenichel's theorems are applicable. From another perspective, a singular perturbation approach for systems with prescribed slow and fast reactions was discussed by Schauer and Heinrich \cite{HeSch}.
More recently, a complete characterization of the parameter values (called Tikhonov-Fenichel parameter values, briefly TFPVs) which give rise to singular perturbations, and of their critical manifolds, was obtained in A.\ Goeke's dissertation \cite{godiss} and the ensuing papers \cite{gw2,gwz,gwz3} by Goeke et al.  Moreover for polynomial or rational systems, an algorithmic path exists toward determining these parameter values. The theory was applied to a number of reaction networks, including  standard reaction networks from biochemistry \cite{KeSn}, and for these all possible singular perturbation reductions could be determined.  In addition, it turned out that the algorithmically determined TFPVs for these systems readily admit an interpretation in terms of chemical species concentrations and reaction rates: Frequently these TFPVs correspond to a ``switching off'' of certain reactions, or a removal of certain chemical species. This is the vantage point for the present paper. Since there is a natural limit to any algorithmic approach for systems with large numbers of variables or parameters, generalizing such structural insights  is of interest.

From a mathematical as well as from a chemical perspective it seems desirable to understand whether (and how) special properties of reaction networks imply the existence of particular classes of TFPVs.
The purpose of the present paper is to contribute toward this understanding. We will focus on  reaction networks with mass-action kinetics, hence on polynomial differential equations.  Our goal is to employ the structure of chemical reaction networks to obtain heuristics for finding TFPV candidates (respectively, candidates for scaling) in a first step and then, in a second step, proceed to verify the TFPV property for some reasonably large and relevant classes of reaction networks.  We make substantial use of the structure theory going back to  Horn and Jackson \cite{horja}, Feinberg \cite{feinberg-balance} and others.  In terms of chemical reaction networks, we are concerned with slow and fast reactions, on the one hand. On the other hand, we investigate the provenance of quasi-steady state phenomena for chemical species, and their naturally associated ``slow-fast'' systems.  Our main results apply in particular to weakly reversible reaction networks of deficiency zero.

Specifically, in Section \ref{sec:CRN_TFPV} we first consider TFPVs that arise from turning off reactions, and identify graphical means for their identification (Theorems \ref{thm_redCB} and \ref{cbthm}). We also provide an explanation of why TFPVs in many cases belong to proper coordinate subspaces (Proposition~\ref{prop:noTFPV}).  In particular, we obtain a complete characterization for weakly reversible systems of deficiency zero. Continuing, in Section \ref{sec:Scale}, we characterise sets of species (so-called LTC species sets) that {``shut down''} the  reaction network when the corresponding variables are zero (hence, the species are present in zero concentration). Such species sets naturally lead to slow-fast systems (in a weak sense), and we further investigate their relation to linear first integrals and give conditions for when an LTC species set is the support of a linear first integral (Proposition \ref{scacor}). We proceed to discuss conditions for TFPV for systems on stoichiometric compatibility classes (Proposition \ref{scctfp} and its corollaries). Finally, we briefly consider combining the approaches to turn off certain reactions and to remove certain species.

\medskip
The paper is organized as follows. Section \ref{sec:prelim} contains preliminaries on reaction networks, TFPVs and (in a weak sense, formally) slow-fast  dynamical systems. Section \ref{sec:CRN_TFPV}, in the context of reaction networks, discusses  TFPVs defined by rate parameters.  Section \ref{sec:Scale} builds on Section \ref{sec:prelim} and connects results of Section \ref{sec:CRN_TFPV} to the classical scaling approach and slow-fast systems. The results are illustrated by examples. In particular, we make use of many standard textbook reaction networks, such as the Michaelis-Menten reaction network  (albeit its reversible version) that serves as benchmarking example.

\section{Preliminaries}\label{sec:prelim}

We let $\R, \R_{\geq 0}, \R_{>0}$ denote the sets of real, non-negative real and positive real numbers,  respectively. Also,  we let  $\N_0$ denote the set of   non-negative integers.
Given $m\in\mathbb N_0$, a {\em coordinate subspace} of $\mathbb R^m$ is defined by $x_{i_1}=\cdots =x_{i_k}=0$ for some $k\in\{0,\ldots,m\}$ and $i_1<\cdots< i_k$. It is \emph{proper} if $k>0$.  The {\em support} $\supp(x)$ of $x\in\mathbb R^m$ is the set of all indices $i$ with $x_i\not=0$, $i=1,\ldots,n$. For $y=(y_1,\ldots,y_n)\in\N_0^n$ and $x=(x_1,\ldots,x_n)\in\R_{\ge 0}^n$, we define $x^y=\prod_{i=1}^n x_i^{y_i}$. If $M=(m_1\ldots m_k)$, $m_i\in\N^n_0$, $i=1,\ldots,k$, is an $(n\times k)$-matrix, then we define $x^M$ as the vector   $(x^{m_1},\ldots,x^{m_k})\in\R^k_{\ge0}$.

\subsection{Reaction networks}\label{crnsubsec}

We consider spatially homogeneous chemical reaction networks   with constant thermodynamical parameters and kinetics of mass-action type. The mathematical theory of these reaction networks was initiated and developed in seminal work by Horn and Jackson \cite{horja}, and Feinberg \cite{feinberg-balance}. We will refer to Feinberg's recent monograph \cite{feinbergbook} as a basic source. First we introduce the notion of a reaction network and fix some terminology.

\begin{definition}\label{crndef}
A \emph{mass-action reaction network} over a set of species $\mathcal{X}=\{X_1,\dots,X_n\}$ is a finite labelled directed graph $G= (\mathcal{Y},\mathcal{R},\k)$ with node set $\mathcal Y$ and edge set $\mathcal R$ such that 
\[ \mathcal{Y} \subseteq  \left\{ \sum_{i=1}^n \alpha_{i}X_i \mid \alpha_i\in\N_0, \ i=1,\ldots,n\right\}\]
 consists of non-negative integer linear combinations in $\mathcal{X}$, and
 $\k$ labels edges by positive real numbers.
 Isolated nodes, but not self-edges, are allowed.
We refer to the nodes as \emph{complexes}, to the edges as \emph{reactions}, and to the labels as \emph{rate parameters}. Every species is assumed to be  in some complex with a  positive coefficient.
 Throughout we let $d$ be the cardinality of $\mathcal{Y}$ and $m$ the cardinality of $\mathcal{R}$. 

A reaction network $\widetilde G$ 
is a \emph{subnetwork} of another reaction network $G$ with species set $\mathcal{X}$, if $\widetilde G$ is a subdigraph of $G$.
\end{definition}

We enumerate the set of complexes in some way, and thus write $Y_j = \sum_{i=1}^n y_{ij} X_i$ with $y_{ij}\in \N_0$. The $y_{ij}$'s are referred to as \emph{stochiometric coefficients}. 
 A labelled reaction between the complexes $Y_j, Y_\ell$ is  written  as
\[ Y_j \cee{->[\k_{\ell j}]} Y_\ell, \qquad \k_{\ell j}>0. \]  
Here $Y_j$ is called a {\em reactant} complex and $Y_\ell$ a {\em product} complex.
Note the reversal of the subindex of $\k$ in the labels.
A numbering of the elements of $\mathcal R$ by $1,\ldots,m$, provides an ordering of $\mathcal{R}$ and we  identify the collection of $\k_{\ell j}$ with a vector $\k \in \mathbb R_{>0}^m$, ordered in the same way as $\mathcal R$, such that $\k_i=\k_{\ell j}$ if $Y_j \cee{->[\k_{\ell j}]} Y_\ell$ is the $i$-th reaction. We will use this convention  without further reference.

The zero complex $0$ is allowed by definition. Reactions with reactant $0$ are called \emph{inflow reactions}, and  account  for production or influx of species.

As a reaction network is given as a directed graph, terminology and properties  from graph theory apply. Moreover, special terminology has been developed, parallel to terminology in graph theory. We will refer to a reaction network where all connected components of the digraph are strongly connected as \emph{weakly reversible}, and otherwise apply standard terminology. 

The evolution of  the species concentrations in time is modelled by means of a system of ODEs,  assuming \emph{mass-action kinetics}.  Denote by $x(t)=(x_1(t),\dots,x_n(t))$ the vector of concentrations of the species $X_1,\dots,X_n$ at time $t$. 
Define the \emph{complex matrix} by
\begin{equation*}\label{massacmat}
Y=\left(y_{ij}\right)_{1\le i\le n,\, 1\le j\le d} \ \in \mathbb R^{n\times d}, 
\end{equation*}
consisting  of the stoichiometric coefficients of the complexes, and let $y_1,\dots,y_d$ denote its columns. 
We let $B$ be the \emph{reactant matrix} with $i$-th column $y_j$ if $Y_j$ is the reactant of the $i$-th reaction,   
and $N\in \R^{n\times m}$  the matrix, referred to as the \emph{stoichiometric matrix}, with $i$-th column given by $y_\ell-y_j$ if $Y_j \cee{->} Y_\ell$ is the $i$-th reaction.  With this notation, the system of ODEs becomes:
\begin{equation}\label{eq:ODE1}
\dot{x}= N \, \diag(\k)\,  x^B, \qquad x\in \R^n_{\geq 0},
\end{equation}
where reference to $t$ is omitted and $\k\in \R^m_{>0}$.  The sets $\R^n_{>0}$ and $\R^n_{\geq 0}$ are positively invariant for \eqref{eq:ODE1}  \cite{volpert}.
Furthermore, there is a useful decomposition of the right-hand side of \eqref{eq:ODE1} in terms of the Laplacian of the reaction network. The \emph{Laplacian matrix} $A(\k) = (a_{ij})_{1\le, i,j\le d}\in \R^{d\times d}$ is given by
\[ a_{ij} = \k_{ij}, \quad i\neq j, \qquad a_{jj} = -\sum_{j\neq \ell} {\k_{\ell j}},\qquad \textrm{for }\quad i,j=1,\ldots,d, \]
where $\k_{ij}=0$ if there is no reaction $Y_j\rightarrow Y_i$. 
Then,   \eqref{eq:ODE1} agrees with
\begin{equation}\label{eq:ODE2}
\dot{x}(t)= Y A(\k)\,  x^Y, \qquad x\in \R^n_{\geq 0},
\end{equation}
(See also Feinberg \cite[Subsection 16.1]{feinbergbook} for further background.)

\medskip
System \eqref{eq:ODE1} often admits {\em stoichiometric first integrals.} These are non-zero linear forms
\[
\phi(x)=\alpha_1x_1+\cdots+\alpha_nx_n
\]
with coefficients $\alpha_i\in\R$, $i=1,\ldots,n$,  such that 
\begin{equation*}\label{stoiconseq}
(\alpha_1,\ldots,\alpha_n)\cdot N=0 \qquad \big(\textrm{equivalently,\quad}(\alpha_1,\ldots,\alpha_n)\cdot Y A(\k)=0\quad \textrm{for all }\k\in\R^m_{>0}\big). 
\end{equation*}
{Note that  $\alpha_1,\ldots,\alpha_n$ might be chosen as integers, since $N$ has integer entries.}

\begin{definition}\label{def:dim}
The image of the stoichiometric matrix $N$  is the {\em stoichiometric subspace}, and the intersection of every coset of this subspace with the {non-negative} orthant is a {\em stoichiometric compatibility class (SCC)}.

The \emph{dimension}   (respectively, \emph{codimension}) of the mass-action reaction network is by definition the dimension (respectively, codimension) of the stoichiometric subspace.
\end{definition}

In principle, system \eqref{eq:ODE1} might admit further linear first integrals. However, the following result 
says that it does not happen for \emph{realistic} networks.

\begin{lemma}[\cite{feinberg-invariant}] \label{lemma:invariant}
If every connected component of a reaction network has exactly one terminal strongly connected component, then every linear first integral of \eqref{eq:ODE1} is stoichiometric.
\end{lemma}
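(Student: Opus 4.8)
The plan is to encode the linear form $\phi(x)=\alpha_1x_1+\cdots+\alpha_nx_n$ by the vector of \emph{complex values} $\eta=Y^\top\alpha\in\R^d$, whose $j$-th entry is $\phi(Y_j)=\alpha^\top y_j$, and to restate both properties in the lemma as conditions on $\eta$ relative to the reaction digraph of $G$.

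First I would use the Laplacian form \eqref{eq:ODE2} of the system: $\phi$ is a first integral precisely when $\alpha^\top\big(YA(\k)\,x^Y\big)=\eta^\top A(\k)\,x^Y$ vanishes on $\R^n_{>0}$, hence (the complexes being pairwise distinct, so the monomials $x^{y_1},\dots,x^{y_d}$ are linearly independent) precisely when $\eta^\top A(\k)=0$. On the other hand $\phi$ is stoichiometric, i.e.\ $\alpha^\top N=0$, exactly when $\alpha^\top(y_\ell-y_j)=0$, i.e.\ $\eta_j=\eta_\ell$, for every reaction $Y_j\to Y_\ell$; equivalently, $\eta$ is constant on each connected component of $G$. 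So the lemma reduces to the graph-theoretic assertion: if every connected component of $G$ has exactly one terminal strongly connected component, then $\eta^\top A(\k)=0$ forces $\eta$ to be constant on each connected component.

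To establish this I would work one connected component at a time, since $A(\k)$ is block diagonal along connected components (no reaction joins two of them). Fix a connected component $L$ with its unique terminal strong component $T$; a forward directed walk from any complex of $L$ must end in a terminal strong component, so $T$ is reachable in $G$ from every complex of $L$. Reading column $j$ of the equation $\eta^\top A(\k)=0$ gives $\sum_{i:\,Y_j\to Y_i}\k_{ij}(\eta_i-\eta_j)=0$ for each complex $Y_j$. Put $c=\max_{j\in L}\eta_j$, attained at some $Y_{j_0}$; then every summand of the identity at $j_0$ is $\le 0$, so each vanishes, whence $\eta_i=c$ for every out-neighbour $Y_i$ of $Y_{j_0}$, and these out-neighbours again attain the maximum. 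Iterating, $\eta\equiv c$ on the set of complexes reachable from $Y_{j_0}$, which contains $T$. Running the same argument with $c'=\min_{j\in L}\eta_j$ shows $\eta\equiv c'$ on a set containing $T$ as well; hence $c=c'$ and $\eta$ is constant on $L$.

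The crux — and the only step requiring any care — is this combinatorial propagation together with the observation that the unique terminal strong component is reachable from everywhere in its connected component (which in turn rests on the condensation of $L$ being a finite acyclic digraph with a single sink); the monomial-independence reduction is routine. I would also note the alternative, less self-contained route: the identity $\operatorname{rank}A(\k)=d-(\text{number of terminal strongly connected components of }G)$ shows, under the hypothesis, that $\dim\ker A(\k)^\top$ equals the number of connected components of $G$, and since the indicator vectors of these components are linearly independent elements of $\ker A(\k)^\top$ they must span it, giving the claim at once.
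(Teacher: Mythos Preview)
The paper does not prove this lemma; it is stated with a citation to Feinberg and Horn \cite{feinberg-invariant} and no argument is given. Your proof is correct, and both routes you outline are sound. It is worth noting that your ``less self-contained'' alternative via $\operatorname{rank}A(\k)=d-T$ is precisely what the paper itself later records in Lemma~\ref{lem:Laplacian}(a),(c): under the hypothesis one has $T=r$, and the $r$ component-indicator vectors listed there span the left-kernel of $A(\k)$, which forces $\eta=Y^\top\alpha$ to be constant on each connected component. So while the paper gives no proof at the point of the lemma, the ingredients for your second argument appear verbatim in Section~3; your maximum-principle argument is a clean, self-contained alternative that avoids invoking the Matrix-Tree rank formula.
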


\begin{example}\label{ex:stoich}
Consider the mass-action reaction network with species $X_1,\, X_2$ and reactions
\begin{equation}\label{eq:net1}
0 \cee{<-[\k_2]} X_1 \cee{->[\k_1]} 2X_1, \qquad X_1 \cee{<=>[\k_3][\k_4]}X_2. 
\end{equation}
The corresponding ODE system is given by
\begin{align*}
\dot x_1&=  (\k_1-\k_2) x_1 -\k_3 x_1+\k_4 x_2, &
\dot x_2 &= \k_3x_1-\k_4 x_2.
\end{align*}
The reaction network has no linear first integrals for generic $\k$, but when $\k_1=\k_2$,  the vector $(1,1)$ defines  one. 
This reaction network has one connected component, but two terminal strongly connected components, namely $\{ 0\}$ and $\{2X_1\}$. 

For the network $X_2 \ce{<-[\k_1]}  X_1  \ce{->[\k_2]} X_3$, the dimension is $1$, but {for all $\k$} there are two linearly independent linear first integrals: a stoichiometric linear first integral $\phi_1=x_1+x_2+x_3$, and a non-stoichiometric, $\phi_2=\k_2 x_2 + \k_1 x_3$. 
\end{example}

While it is possible at the outset to reduce the dimension of system \eqref{eq:ODE1} via these first integrals, for the purpose of the present paper it seems appropriate to keep the representation \eqref{eq:ODE1} until at a later stage.

\subsection{Tikhonov-Fenichel parameter values (TFPVs)}\label{tfpvsubsec}

Throughout, when refering to  \emph{singular perturbation reduction}, we mean this in the sense of Tikhonov \cite{tikh}  and Fenichel \cite{fenichel}. In order to identify parameters that give rise to singular perturbation reductions  the following approach was taken in Goeke's dissertation \cite{godiss} and the subsequent papers \cite{gw2,gwz}.

Consider a parameter-dependent ODE {system},
\begin{equation}\label{sys-tpwI}
\dot x = h(x,\pi),\quad x\in \Omega\subseteq  \mathbb R^n,\quad \pi\in\Pi\subseteq\mathbb R^m
\end{equation}
with $h(x,\pi)$ polynomial in $x$ and $\pi$.
We let $D_1h(x,\pi)$ and $D_2 h(x,\pi)$ denote the partial derivatives with respect to $x$ and $\pi$, respectively. Given \(\pi\in \Pi\), we denote by $ \mathcal{V}(h(\cdot,  \pi))$ the zero set of $x\mapsto h(x,\pi)$, and let $n-s^*$ be the generic dimension (with respect to $\pi$) 
 of the vector subspace generated by the entries of $h(x,\pi)$ for $x\in \Omega$. In addition, we require that the generic rank of $D_1h(x,\pi)$, with $x\in\R^n$, equals $n-s^*$ 
 
{In the setting of mass-action reaction networks, this subspace is equal to the stoichiometric subspace under the hypotheses of Lemma~\ref{lemma:invariant}.    
In this case, $s^*$ is the codimension of the reaction network, according to Definition~\ref{def:dim}. 
}

\medskip
The existence of singular perturbation reductions coincides with the existence of  {\em Tikhonov-Fenichel parameter values (TFPVs)}. 

\begin{definition}\label{def:TFPV}
A \emph{TFPV for dimension $s$} ($s^*< s< n$)  of  system  \eqref{sys-tpwI} is a  parameter \(\widehat \pi\in \Pi\), such that the following hold:
\begin{enumerate}[(i)]
 \item The \emph{critical variety} {$\mathcal{V}(h(\cdot, \widehat \pi))\cap \Omega$}   contains {an irreducible} component \(Z\) of dimension \(s\).
 \item There is a Zariski open subset $\widetilde Z\subseteq Z$ such that for all $x\in \widetilde Z$ one has
\[
{\rm rank}\,D_1h(x,\widehat\pi)=n-s
\quad
\textrm{and} 
 \quad
 \mathbb R^n = {\rm Ker}\ D_1h(x,\widehat \pi) \oplus {\rm Im}\ D_1h(x,\widehat \pi).
\]
\item There exists $x_0\in\widetilde Z$ such that all  non-zero eigenvalues of $D_1h(x_0,\widehat \pi)$ have negative real part.
\end{enumerate}

We let $\Pi_s\subseteq\Pi$ denote the set of TFPVs for dimension $s>s^*$. 
\end{definition}

Note that provided (i) holds, then (ii) and (iii) are together equivalent to 
\begin{enumerate}
 \item[(ii')] There exists $x_0\in\widetilde Z$ such that $D_1h(x_0,\widehat \pi)$ has exactly $n-s$ non-zero eigenvalues (counted with multiplicity), which additionally have negative real part.
\end{enumerate}
The conditions imply that the critical manifold $\widetilde Z$ is locally exponentially attracting.
We  have the following characterization  \cite{godiss,gwz}.
 
\begin{proposition}
Given a parameter $\pi\in \Pi$ and any smooth curve $\varepsilon\mapsto \varphi(\varepsilon)$ in the parameter space $\Pi$ with $\varphi(0)=\widehat\pi$, the system
\[\dot x = h(x,\varphi(\epsilon))=h(x,\widehat\pi)+\varepsilon D_2h(x,\widehat\pi)\varphi'(0)+\cdots\]
admits a singular perturbation reduction {in the sense of Tikhonov and Fenichel} if and only if $\pi$ is a TFPV.
\end{proposition}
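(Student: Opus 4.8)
The plan is to match each of the three defining conditions of a TFPV, read near a generic point of the critical component, with one structural ingredient of a Tikhonov--Fenichel standard form, and to argue in both directions. Throughout I would write $h^{(0)}(x):=h(x,\widehat\pi)$ and $h^{(1)}(x):=D_2h(x,\widehat\pi)\varphi'(0)$, so that the perturbed system reads $\dot x = h^{(0)}(x)+\varepsilon h^{(1)}(x)+O(\varepsilon^2)$; only $\widehat\pi=\varphi(0)$ and $\varphi'(0)$ enter the leading two orders, and the point of the argument is that the mere \emph{existence} of a reduction will depend on $\widehat\pi$ alone (the curve affecting only the explicit shape of the reduced equation).

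For the implication ``$\widehat\pi$ a TFPV $\Rightarrow$ reduction exists'', I would fix $s$ with $s^*<s<n$ and the data $Z$, $\widetilde Z$, $x_0$ from Definition~\ref{def:TFPV}. The crucial step is a local factorization $h^{(0)}(x)=P(x)\,\mu(x)$ around $x_0$ (a Hadamard-type argument, $Z$ being locally a smooth $s$-manifold), with $P(x)$ an $n\times(n-s)$ matrix of full column rank, $\mu\colon\R^n\to\R^{n-s}$ a submersion vanishing exactly on $Z$ near $x_0$, and $D\mu(x_0)P(x_0)$ invertible; this last property is equivalent to the direct sum $\R^n=\ker D_1h^{(0)}(x_0)\oplus\im D_1h^{(0)}(x_0)$ in (ii), and since $\mu(x_0)=0$ the eigenvalues of $D\mu(x_0)P(x_0)$ are exactly the non-zero eigenvalues of $D_1h^{(0)}(x_0)=P(x_0)D\mu(x_0)$. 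Completing $\mu$ to a local diffeomorphism $x\mapsto(u,v)$ with $u=\mu(x)\in\R^{n-s}$ and $v\in\R^s$ a chart of $Z$, the system becomes $\dot u=(D\mu\,P)(x)\,u+\varepsilon(\dots)+O(\varepsilon^2)$, $\dot v=(\dots)$, i.e.\ Tikhonov standard form whose layer equation has critical manifold $\{u=0\}$; by (iii) this manifold is normally hyperbolic and exponentially attracting, so Tikhonov's theorem --- equivalently Fenichel's geometric version --- supplies the reduction onto $Z$. Since $x_0\in\widetilde Z$ was arbitrary, $\widetilde Z$ is Zariski-dense in $Z$, and the construction used only $\varphi(0)=\widehat\pi$, this works along $Z$ for every curve $\varphi$.

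For the converse I would start from the assumed reduction: by the meaning of ``singular perturbation reduction in the sense of Tikhonov and Fenichel'', after a coordinate change on some open set the system $\dot x=h(x,\varphi(\varepsilon))$ is in standard form with $s$ slow and $n-s$ fast coordinates, $s^*<s<n$, possessing a normally hyperbolic, exponentially attracting critical manifold $M$. Setting $\varepsilon=0$ and reading off: $M$ is an $s$-dimensional subset of $\mV(h^{(0)})\cap\Omega$, so the component through a generic point of $M$ gives the irreducible $s$-dimensional $Z$ of (i); at such a point the fast block of $D_1h^{(0)}$ is invertible while the slow block vanishes, so $\mathrm{rank}\,D_1h^{(0)}=n-s$ with kernel the tangent space of $M$ and image a complement, which yields (ii) on a suitable Zariski-open $\widetilde Z$; and exponential attractivity forces the $n-s$ non-zero eigenvalues of $D_1h^{(0)}$ to have negative real part, giving (iii). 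Hence $\widehat\pi\in\Pi_s$.

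The step I expect to be the main obstacle is the $(\Leftarrow)$ passage from the purely algebraic data of Definition~\ref{def:TFPV} to an honest local Tikhonov standard form: producing the factorization $h^{(0)}=P\mu$ with all nondegeneracy properties intact, verifying that the transformed fast subsystem has the asserted linearization, and invoking the correct singular perturbation theorem (Fenichel's, since one only has normal hyperbolicity plus attractivity rather than isolated asymptotically stable rest points of the layer problem). A secondary, bookkeeping-type difficulty present in both directions is reconciling the ``irreducible component / Zariski-dense open subset'' formulation of the TFPV conditions with the ``in a neighbourhood of one point'' formulation of the reduction; this is routine once one uses that $\dim\mV(h^{(0)})$ and $\mathrm{rank}\,D_1h^{(0)}$ attain their generic values off a proper subvariety, and the constructive form of the whole argument is carried out in \cite{godiss,gwz}.
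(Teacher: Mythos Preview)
The paper does not give its own proof of this proposition; it is stated with a citation to \cite{godiss,gwz} and used as a black box. Your sketch is correct and in fact reproduces the strategy of those references: the local factorization $h^{(0)}=P\,\mu$ with $D\mu(x_0)P(x_0)$ invertible is precisely the ``product decomposition'' that is the central construction in \cite{gw2,gwz}, and the equivalence of the rank/direct-sum/spectral conditions (ii)--(iii) of Definition~\ref{def:TFPV} with normal hyperbolicity and exponential attractivity of the critical manifold is exactly how those axioms are engineered there. Your identification of the main technical point (building the factorization and invoking Fenichel rather than classical Tikhonov) is also accurate; this is carried out in detail in \cite{gw2}.
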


Thus one may think of a TFPV as a (``degenerate'') parameter set from which singularly perturbed systems emanate.

If $\Pi$ and $\Omega$ are semi-algebraic sets, then  $\Pi_s$ is a semi-algebraic set as well \cite{gwz}. In any case, the Zariski closure of $\Pi_s$ exists and is denoted by 
\begin{equation}\label{eq:Ws}
W_s := \overline{\Pi_s}^{Zar}.
\end{equation}

An alternative characterization of TFPVs and the basis for an algorithmic approach to TFPVs is the following \cite{godiss,gwz}. Consider the characteristic polynomial
\begin{equation*}
\chi(\tau,x,\pi)=\tau^n+\sigma_{1}(x,\pi)\tau^{n-1}+\cdots+ \sigma_{n-1}(x,\pi)\tau+\sigma_n(x,\pi)
\end{equation*}
of  $D_1h(x,\pi)$. Then, given $s^*<s<n$, a parameter value $\widehat\pi$ is a TFPV with locally exponentially attracting critical manifold $\widetilde Z$ (depending on $\widehat \pi$) of dimension $s$, 
if and only  if the following hold for some $x_0 \in \widetilde Z$:
\begin{enumerate}
\item[(iv)] $h(x_0,\widehat\pi)=0$.
\item[(v)] The characteristic polynomial $\chi(\tau,x,\pi)$ satisfies
\begin{enumerate}
\item[(1)] $\sigma_n(x_0,\widehat\pi)=\cdots=\sigma_{n-s+1}(x_0,\widehat\pi)=0$;
\item[(2)] all roots of $\chi(\tau,x_0,\widehat\pi)/\tau^s$ have negative real part.
\end{enumerate}
\item[(vi)] The system $\dot x=h(x,\widehat\pi)$ admits $s$ independent local analytic first integrals at $x_0$.
\end{enumerate}
Therefore,  a starting point for computing TFPVs is as follows: With $h(x_0,\widehat\pi)=0$ and $\sigma_n(x_0,\widehat\pi)=\cdots=\sigma_{n-s+1}(x_0,\widehat\pi)=0$, one sees that $(x_0,\widehat\pi)$ is a solution to $n+s>n$ equations for $x\in\mathbb R^n$, given $\widehat\pi$. In turn, this allows to obtain conditions on $\widehat\pi$ for general polynomial systems via elimination theory.

The validity of the hypotheses for Tikhonov's and Fenichel's theorems depend on the ambient space, and thus may change when passing to an invariant subspace. As a consequence, the notion of TFPV may also  depend on the ambient space. For reaction networks this observation is relevant when passing to {SCCs}.


\subsection{Slow-fast systems and scalings}\label{slofasubsec}

In the present paper,  we call a smooth system of the form
\begin{equation}\label{slofa}
\begin{aligned}
\dot u_1&=  f_1(u_1,u_2,\varepsilon),\\
\dot u_2&= \varepsilon \,f_2(u_1,u_2,\varepsilon),\\
\end{aligned}
\end{equation}
on an open subset of $\mathbb R^s\times\mathbb R^r\times \mathbb R$, with a parameter {$\varepsilon$ in a neighborhood of  $0$}, a {\em slow-fast system}.

A classical approach to a rigorous foundation of quasi-steady state phenomena in chemical reaction networks goes back to Heineken et al. \cite{hta}:  In order to obtain a slow-fast system from \eqref{sys-tpwI} some variables of the system that satisfy  a compatibility condition are scaled  by a small parameter. We outline a simplified version of this technique:
Given {a smooth} curve $\varepsilon \mapsto\pi^*+\varepsilon \rho+\ldots$ in the  parameter space (with $\pi^*$ not necessarily a TFPV), we obtain a system 
\begin{equation}\label{tayloreps}
h(x,\pi^*+\varepsilon \rho+\cdots)=h^{(0)}(x)+\varepsilon h^{(1)}(x)+\varepsilon^2 h^{(2)}(x)+\cdots=:h^*(x,\varepsilon)
\end{equation}
with ``small" parameter $\varepsilon$. Note that
\begin{equation*}\label{eq:h0}
h^{(0)}(x) = h(x,\pi^*).
\end{equation*}

As for the compatibility condition, we follow \cite{lawa}.

\begin{definition}\label{def:indexset}
An index set
\[
\{i_1,\ldots,i_r\},\quad 1\leq i_1<\cdots<i_r\leq n,\quad 1\leq r<n,
\]
is called an {\em LTC index set} for \eqref{tayloreps}, and the set of corresponding variables {$\{x_{i_1},\dots,x_{i_r}\}$} an {\em LTC variable set}, if 
\begin{equation}\label{eq:LCTeqn}
h^{(0)}(x)=0 ,\quad \text{whenever}\quad x_{i_1}=\cdots=x_{i_r}=0.
\end{equation}
 (The acronym stands for ``locally Tikhonov consistent'' \cite{lawa}.) 
 \end{definition}

If the ODE system models a reaction network, then  the corresponding  species set $\{X_{i_1},\ldots,X_{i_r}\}$ is called  a set of {\em LTC species}. If the concentrations of all the species in an LCT set are all zero, then no reaction can take place.
Note that \eqref{eq:LCTeqn} cannot be fulfilled if there are inflow reactions in the reaction network, as $h^{(0)}(x)$ contains a non-zero
 constant monomial.

For an  LCT index set $\{i_1,\dots,i_r\}$, define
\[u_1:=\begin{pmatrix} x_{i_1}\\ \vdots\\ x_{i_r}\end{pmatrix},\]
and collect the remaining variables in $u_2$. Partitioning 
\[
x=\begin{pmatrix}u_1\\ u_2\end{pmatrix},
\]
and rewriting
$h^*(x,\varepsilon)= : \,g(u_1,u_2,\varepsilon)$,
one  obtains a system 
\begin{equation}\label{ltcslofa}
\begin{aligned}
\dot u_1&=  g_1(u_1,u_2,\varepsilon),\\
\dot u_2&= g_2(u_1,u_2,\varepsilon),\\
\end{aligned}
\end{equation}
with $g(0,u_2,\varepsilon)=0$. Scaling $u_1=\varepsilon \, u_1^*$, one can write
$g_i(\varepsilon\,  u_1^*,u_2,\varepsilon)=\varepsilon\,  \widehat g_i(u_1^*,u_2,\varepsilon)$,  
with $\widehat g_1,\,\widehat g_2$ being polynomials, provided that $g_1,\,g_2$ are so, arriving at the slow-fast system as in \eqref{slofa},
\begin{equation}\label{forslofa}
\begin{aligned}
\dot u_1^*&= \widehat g_1(u_1^*,u_2,\varepsilon),\\
\dot u_2&= \varepsilon \,\widehat{g}_2(u_1^*,u_2,\varepsilon).
\end{aligned}
\end{equation}

In the singular perturbation reduction following Heineken et al.\ \cite{hta}, one applies Tikhonov's theorem to \eqref{forslofa}, upon verifying the necessary conditions.  In the literature, a frequently used shortcut is to directly solve $g_1(u_1,u_2,\varepsilon)=0$ (with small $\varepsilon)$ for $u_1$ and substitute the result into the second equation of \eqref{ltcslofa}. We will refer to this procedure as {\em classical QSS reduction.} Note that without  {further analysis, e.g.\ verifying the hypotheses for Tikhonov's theorem}, this is a purely formal procedure.

Obviously, any superset of an LTC species set is also an LTC species set, but minimal LTC species sets are of primary interest: For non-minimal LTC species sets the partial derivative $D_1\widehat g_1$ necessarily has zero columns, so a local resolution of the implicit equation $\widehat g_1=0$ cannot exist.

Since solutions of \eqref{forslofa} are bounded on compact subsets of their maximal existence interval, one finds $u_1=O(\varepsilon)$ on these compact subintervals. But it is not guaranteed that system \eqref{ltcslofa} admits a local $(n-r)$-dimensional invariant manifold close to $u_1=0$ for small positive $\varepsilon$, hence there remains the question whether a singular perturbation reduction exists. Thus, LTC variable sets provide candidates for Tikhonov-Fenichel reductions, but these need further investigation. Moreover, even in the singular perturbation setting, there may not be a connection to TFPVs. We will get back to this later.

In some cases, direct application of Tikhonov-Fenichel does not work, but singular perturbation reduction with a critical variety of higher dimension is possible. For instance, Schneider and Wilhelm \cite{SchWi} considered a scenario, where the fast part of \eqref{forslofa} admits non-trivial first integrals. In such a setting, the partial derivative $D_1\widehat g_1$ cannot have full rank, but if the rank is full on every level set of the first integrals, and the non-zero eigenvalues have negative real parts, then reduction works. (Conversely, the local existence of such first integrals is also necessary  \cite[Prop. 2]{gw2}.)

\medskip 
For  reaction networks it is of interest to understand whether first integrals of system \eqref{ltcslofa} (and thus of \eqref{tayloreps}) carry over, upon scaling, to the fast system at $\varepsilon=0$, and to a possible reduction. We first note an obvious fact.

\begin{lemma} \label{lem:int}
Let $\phi(u_1,u_2,\varepsilon)$ denote a smooth first integral of system \eqref{ltcslofa}. Then, 
\[
\psi(u_1^*,u_2,\varepsilon)=\phi(\varepsilon u_1^*,u_2,\varepsilon)
\]
is a smooth first integral of system \eqref{forslofa}.
\end{lemma}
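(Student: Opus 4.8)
The plan is to verify directly that $\psi$ is constant along trajectories of system \eqref{forslofa}, by relating the time derivative of $\psi$ to that of $\phi$. Since $\phi$ is a first integral of \eqref{ltcslofa}, we have the identity
\[
D_1\phi(u_1,u_2,\varepsilon)\,g_1(u_1,u_2,\varepsilon) + D_2\phi(u_1,u_2,\varepsilon)\,g_2(u_1,u_2,\varepsilon) = 0
\]
for all $(u_1,u_2,\varepsilon)$ in the relevant open set (here $\varepsilon$ is treated as a parameter, so only the $u_1,u_2$ components of the gradient enter). The substitution $u_1 = \varepsilon u_1^*$ is an admissible substitution into this identity, so it continues to hold with $u_1$ replaced by $\varepsilon u_1^*$ everywhere.

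Next I would compute the Lie derivative of $\psi$ along \eqref{forslofa}. By the chain rule, $D_1\psi(u_1^*,u_2,\varepsilon) = \varepsilon\,(D_1\phi)(\varepsilon u_1^*,u_2,\varepsilon)$ and $D_2\psi(u_1^*,u_2,\varepsilon) = (D_2\phi)(\varepsilon u_1^*,u_2,\varepsilon)$. Therefore, along \eqref{forslofa},
\[
\dot\psi = D_1\psi\cdot \widehat g_1 + D_2\psi\cdot \varepsilon\widehat g_2 = \varepsilon\,(D_1\phi)(\varepsilon u_1^*,u_2,\varepsilon)\,\widehat g_1 + \varepsilon\,(D_2\phi)(\varepsilon u_1^*,u_2,\varepsilon)\,\widehat g_2.
\]
Now I invoke the defining relations $g_i(\varepsilon u_1^*,u_2,\varepsilon) = \varepsilon\,\widehat g_i(u_1^*,u_2,\varepsilon)$: multiplying the evaluated first-integral identity above by $1$ and comparing, we get $\varepsilon\,(D_1\phi)(\varepsilon u_1^*,\ldots)\widehat g_1 + \varepsilon\,(D_2\phi)(\varepsilon u_1^*,\ldots)\widehat g_2 = (D_1\phi)(\varepsilon u_1^*,\ldots)\,g_1(\varepsilon u_1^*,\ldots) + (D_2\phi)(\varepsilon u_1^*,\ldots)\,g_2(\varepsilon u_1^*,\ldots) = 0$. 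Hence $\dot\psi = 0$ along \eqref{forslofa}, which is the claim. Smoothness of $\psi$ is immediate since it is a composition of the smooth map $\phi$ with the smooth (polynomial) substitution $(u_1^*,u_2,\varepsilon)\mapsto(\varepsilon u_1^*,u_2,\varepsilon)$.

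There is essentially no serious obstacle here; the lemma is labelled "an obvious fact" and the argument is a one-line chain-rule computation. The only point requiring a modicum of care is bookkeeping: keeping track of which arguments the partial derivatives of $\phi$ are evaluated at (namely $(\varepsilon u_1^*, u_2,\varepsilon)$, not $(u_1^*,u_2,\varepsilon)$), and remembering that $\varepsilon$ is held fixed as a parameter when forming the Lie derivative, so that the $\partial_\varepsilon\phi$ terms do not appear in the first-integral condition. One should also note that for this to make sense one needs $(\varepsilon u_1^*, u_2,\varepsilon)$ to lie in the domain of $\phi$, which holds on the open set where \eqref{forslofa} is defined, by construction of the scaling.
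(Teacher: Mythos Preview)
Your proof is correct and is exactly the direct chain-rule verification the paper has in mind; the paper does not supply a proof at all, merely labelling the lemma ``an obvious fact.''
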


Letting $\varepsilon\to 0$ in Lemma~\ref{lem:int}, one sees that $\phi(0,u_2,0)$ is (constant or) a first integral of the fast system
\[
\begin{aligned}
\dot u_1^*&= \widehat g_1(u_1^*,u_2,0),\\
\dot u_2&= 0,\\
\end{aligned}
\]
which is a true but uninteresting fact due to $\dot u_2=0$. However, we do have:

\begin{proposition}\label{slofafiprop}
Let $\phi(u_1,u_2,\varepsilon)$ denote a smooth first integral of system \eqref{ltcslofa}. Then, the following hold:
\begin{enumerate}[(a)]
\item If $\phi(0,u_2,0)=0$, then $\phi(\varepsilon u_1^*,u_2,\varepsilon)=\varepsilon \widehat\phi(u_1^*,u_2,\varepsilon)$, and $\widehat \phi$ is a first integral of \eqref{forslofa}, and $\widehat \phi(u_1^*,u_2,0)$ is (constant or) a first integral of the fast system.
\item If system \eqref{forslofa} admits a singular perturbation reduction  
(directly or in the sense of \cite[Prop. 3]{lawa}) then $\widehat \phi(u_1^*,u_2,0)$ is (constant or) a first integral of the reduced system on the critical manifold.
\end{enumerate}
\end{proposition}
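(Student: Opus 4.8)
\textbf{Proof plan for Proposition~\ref{slofafiprop}.}

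The plan is to exploit Lemma~\ref{lem:int} together with the structure of the scaling $u_1 = \varepsilon u_1^*$. For part~(a), start from the fact that $\psi(u_1^*,u_2,\varepsilon) = \phi(\varepsilon u_1^*, u_2, \varepsilon)$ is a first integral of \eqref{forslofa} by Lemma~\ref{lem:int}. The hypothesis $\phi(0,u_2,0)=0$ says precisely that $\psi(u_1^*,u_2,0)=\phi(0,u_2,0)=0$, so $\psi$ vanishes identically on the hyperplane $\varepsilon=0$. Since $\phi$ is smooth (and, in the polynomial setting, $\psi$ is a polynomial in which every monomial carries a factor of $\varepsilon$ coming from the substitution $u_1 \mapsto \varepsilon u_1^*$ except those independent of $u_1$, which vanish by the hypothesis), Hadamard's lemma gives a smooth $\widehat\phi$ with $\psi = \varepsilon\,\widehat\phi(u_1^*,u_2,\varepsilon)$. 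Because $\psi$ is a first integral of \eqref{forslofa} and $\varepsilon$ is constant along trajectories, $\widehat\phi = \psi/\varepsilon$ is a first integral of \eqref{forslofa} for each fixed $\varepsilon\neq 0$; by continuity of $\tfrac{d}{dt}\widehat\phi$ in $\varepsilon$ (using that $\widehat g_1,\widehat g_2$ are smooth up to $\varepsilon=0$), it remains a first integral at $\varepsilon = 0$. Setting $\varepsilon=0$ in the identity $\nabla\widehat\phi\cdot(\widehat g_1, \varepsilon\widehat g_2)^{T}=0$ then shows that $\widehat\phi(u_1^*,u_2,0)$ is constant or a first integral of the fast system $\dot u_1^* = \widehat g_1(u_1^*,u_2,0)$, $\dot u_2 = 0$.

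For part~(b), invoke the singular perturbation reduction of \eqref{forslofa}. When the reduction holds directly (Tikhonov--Fenichel), the critical manifold is contained in the zero set $\widehat g_1(u_1^*,u_2,0)=0$, and the reduced dynamics live on this manifold; the key point is that a first integral of the fast layer that is simultaneously "inherited" from a genuine first integral of the full $\varepsilon$-family restricts to a first integral of the reduced flow. Concretely, $\widehat\phi(u_1^*,u_2,\varepsilon)$ being a first integral of \eqref{forslofa} for all small $\varepsilon$ means it is constant along the slow flow on any locally invariant slow manifold $M_\varepsilon$ close to the critical manifold; passing to the limit $\varepsilon\to 0$ along $M_\varepsilon$, which converges to the critical manifold, and using that the reduced vector field is the limit of the (rescaled) slow flow, yields that $\widehat\phi(u_1^*,u_2,0)$ is constant or a first integral of the reduced system. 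In the more general setting of \cite[Prop.~3]{lawa}, where the fast system carries its own first integrals and the reduction takes place on their common level sets, the same argument applies verbatim once one restricts to a level set: $\widehat\phi$ is a first integral of \eqref{forslofa}, hence of its reduction on that level set.

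The main obstacle I expect is the interchange of limits in part~(b): one needs that a first integral of the perturbed slow--fast system, restricted to the family of invariant slow manifolds $M_\varepsilon$, passes to a first integral of the reduced system in the limit $\varepsilon\to 0$. This requires care about the regularity of $M_\varepsilon$ in $\varepsilon$ (Fenichel's theorem gives smoothness of $M_\varepsilon$ in $\varepsilon$ on compact sets, which suffices) and about the precise relation between the reduced vector field and the $\varepsilon\to 0$ limit of the slow flow in rescaled time. In the setting of \cite[Prop.~3]{lawa} the analogous facts are already established, so the cleanest route is to quote that machinery and note that a first integral of the ambient $\varepsilon$-family is automatically a first integral of whatever reduced system that machinery produces, because the reduction is a restriction-and-projection of the original flow. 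The algebraic parts (Hadamard factorization in part~(a), the chain-rule identities) are routine and will be stated without detailed computation.
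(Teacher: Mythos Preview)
Your proposal is correct and aligns with the paper's approach: the paper's proof simply states that (a) is ``straightforward'' and that (b) is shown in \cite[Prop.~4]{lawa}. Your expansion of (a) via Lemma~\ref{lem:int} and Hadamard-type factorization is exactly the intended straightforward argument, and for (b) your recognition that the cleanest route is to invoke the reduction machinery from \cite{lawa} matches the paper (note the paper cites Proposition~4 there, not Proposition~3, for this conclusion).
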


\begin{proof} (a) is  straightforward. (b) is shown in \cite[Prop. 4]{lawa}.
\end{proof}

 {
Since stoichiometric first integrals are of particular importance for  reaction networks, we note a special case:
Let $\phi=m_1 u_1+m_2u_2$ be a first integral of \eqref{ltcslofa} with row vectors $m_1,\,m_2$. If $m_2=0$, then $\phi$ is also a first integral of the fast system. (Note that this is a rather restrictive condition.)
 }
\begin{example}\label{mmdisguiseex}
Consider the reversible Michaelis-Menten reaction network \cite{KeSn},
\begin{equation}\label{mmcrn}
X_1+X_2\cee{<=>[\k_1][\k_{-1}]} X_3\cee{<=>[\k_2][\k_{-2}]}  X_4+X_2,
\end{equation}
where $X_1$ chemically is a substrate, $X_2$ an enzyme,  $X_3$ an intermediate complex, and  $X_4$ a product, formed by conversion of the substrate $X_1$.
Using \eqref{eq:ODE1}, we obtain the ODE system
\begin{align*} 
\dot x_1 &= -\k_1x_1x_2 + \k_{-1}x_3, \\
\dot x_2 &= -\k_1x_1x_2 + (\k_{-1}+\k_2)x_3- \k_{-2}x_2x_4,\\
\dot x_3 &=  \k_1x_1x_2 - (\k_{-1}+\k_2)x_3+ \k_{-2}x_2x_4,\\
\dot x_4 &= \k_2x_3 - \k_{-2}x_2x_4.
\end{align*}
The right-hand side vanishes for instance when $x_2=x_3=0$, hence $\{x_2,x_3\}$ is an LTC variable set.   Upon scaling $x_2=\varepsilon\,  x_2^*$ and $x_3=\varepsilon \,  x_3^*$, we obtain the slow-fast system
\begin{align*} 
\dot x_1 &=\varepsilon\,  (-\k_1x_1x_2^* + \k_{-1}x_3^*), \\
\dot x_2^* &= -\k_1x_1x_2^* + (\k_{-1}+\k_2)x_3^*- \k_{-2}x_2^*x_4,\\
\dot x_3^* &= \k_1x_1x_2^* - (\k_{-1}+\k_2)x_3^*+ \k_{-2}x_2^*x_4,\\
\dot x_4 &=\varepsilon\,  ( \k_2x_3^* - \k_{-2}x_2^*x_4).
\end{align*}

Tikhonov's theorem is not directly applicable to this slow-fast system, since the rank condition is not satisfied.  {But a step-by-step approach yields a reduction to dimension one: With the first integral $\phi_1=x_2+x_3$ one obtains a three-dimensional system for $x_1,\,x_2^*$ and $x_4$, for which a singular perturbation reduction to dimension two exists. Then with Proposition \ref{slofafiprop} the first integral $\phi_2=x_1+x_3+x_4$ allows a further reduction to dimension one.}
\end{example}

\section{TFPVs for reaction networks}\label{sec:CRN_TFPV}

\subsection{General considerations}

While the notion of TFPV applies to all parameter dependent polynomial (and more general) vector fields, special properties of reaction networks impose restrictions. 
We give an elementary illustration of this fact.

\begin{example}
Consider the linear differential equation in $\mathbb R^2$,
\[
\dot x =\begin{pmatrix}\alpha_{11}&\alpha_{12}\\ \alpha_{21}&\alpha_{22}\end{pmatrix}x + b =A \,x +b,
\]
where the second equality defines $A$, and  $\alpha_{ij}\in\R$, $i,j=1,2$. By Definition~\ref{def:TFPV}(i), a TFPV satisfies 
\[
0= \det A= \alpha_{11}\alpha_{22}-\alpha_{12}\alpha_{21}.
\]
This relation defines a cone in the parameter space. On the other hand, by \eqref{eq:ODE2}, every linear $2\times 2$ system describing a first order reaction network with two species (hence, the reaction network has the complexes $X_1,X_2$ and possibly $0$), takes the form \[
\dot x =\begin{pmatrix}-\k_{21}-\k_{31} &\k_{12}\\ \k_{21}&-\k_{12}-\k_{32}\end{pmatrix}\,x + 
\begin{pmatrix} \k_{13} \\ \k_{23}\end{pmatrix},
\]
with non-negative $\k_{ij}$ ($\k_{ij}$ is zero if the corresponding reaction does not exist). The determinant condition on the Jacobian of the system simplifies to 
\[
\k_{21}\k_{32} + \k_{31}\k_{12} + \k_{31}\k_{32}=0 \quad \Leftrightarrow  \quad \k_{21}\k_{32} =\k_{31}\k_{12} =\k_{31}\k_{32}=0,
\]
due to non-negativity. In addition, the existence of stationary points requires conditions on $\k_{13}$ and $\k_{23}$. Evaluating the TFPV conditions, one sees that they all admit an interpretation in the reaction network framework: Certain reactions are being ``switched off''.  Furthermore, the conditions for TFPVs to exist yield very simple irreducible components of $W_s$, namely coordinate subspaces.
\end{example}

For a number of standard reaction networks in biochemistry (in particular those described in the first chapter of Keener and Sneyd \cite{KeSn}), all TFPVs were determined algorithmically in Goeke's dissertation \cite{godiss} and in the subsequent papers \cite{gwz,gwz3}.  It turned out that all of these admit an interpretation as a degenerate scenario in reaction network terms, via ``switched off'' reactions or missing species (and in some cases a combination of these). Based on these observations, and employing the theory of reaction networks, we will investigate conditions on reaction networks that guarantee the existence of singular perturbation scenarios.

For the reaction networks discussed in \cite{gwz,gwz3}, one finds that every irreducible component of $W_s$ (see \eqref{eq:Ws}) is just a coordinate subspace.
Indeed, it is not easy to find (realistic) systems where some component of $W_s$ is not  a coordinate subspace. 
This may be the case when non-stoichiometric first integrals exist for only some $\k$:
Let $s^*$ be the codimension of the reaction network (the number of independent stoichiometric first integrals). 
Assume the set $\widetilde{\Pi}$ of $\k$'s that give rise to extra linear first integrals is a proper algebraic variety and hence has measure zero (as for the reaction network in \eqref{eq:net1} in Example \ref{ex:stoich}). Any point in $\widetilde{\Pi}$ is a candidate for a  TFPV in dimension $s>s^*$, if furthermore the critical manifold {intersects the non-negative orthant} and is attracting. 
Going back to network \eqref{eq:net1}, the set $\widetilde{\Pi}$ consists of  TFPVs and is characterized by the condition $\k_1=\k_2$, as one easily verifies that there exists a linearly attracting critical manifold.
 
An artificial way to construct further examples where the set of TFPVs is not included in a coordinate subspace, is to consider any parametrized polynomial system for which the  dimension of the set of stationary points is larger than $s^*$ for some choice of parameters, and furthermore, all negative monomials of the $i$-th polynomial are multiples of $x_i$. The latter is enough to constructively  interpret the system as arising from a mass-action reaction network \cite{erdi-toth}, though the  networks obtained in this way are typically not realistic. The following example is generated in this way. 

\begin{example}\label{ex:minus}
Consider the following mass-action reaction network
\[ X_2 \cee{->[\k_1]} X_1+X_2 \cee{->[\k_2]} X_1,  \cee{->[\k_3]}  0 \qquad 2X_1  \cee{->[\k_4]} X_2+2X_1. 
\]
The associated ODE system in $\R^2_{\geq 0}$ is
\[ \dot{x}_1 = \k_1 x_2 - \k_3 x_1,\qquad \dot{x}_2 = -\k_2 x_1x_2 + \k_4 x_1^2 = x_1( -\k_2 x_2 + \k_4 x_1).\] 
Generically, the variety of stationary points consists of the point $(0,0)$ and has dimension $s^*=0$. 
However, when $\k_1\k_4=\k_2\k_3$, then the variety has dimension one and consists of the line $\k_1 x_2=\k_3 x_1$.  Additionally, a direct computation shows that the critical manifold is attracting for $(x_1,x_2)\in \R^2_{\geq 0}$. Hence, $\k_1\k_4=\k_2\k_3$ defines a set of TFPVs for dimension one. 
In this case, there are no linear first integrals. 
\end{example}

We now turn to system \eqref{eq:ODE1}, and first establish conditions to ensure that every TFPV lies in some proper coordinate subspace, thus every  irreducible component of $W_s$ is contained in some coordinate subspace.
If we require the critical manifold to intersect the positive orthant, the existence of TFPVs $\widehat\k\in \R^m_{>0}$ is easily precluded for important classes of reaction networks.

In preparation for Proposition~\ref{prop:noTFPV} we introduce some objects and some notation. Let $N'\in \R^{s^*\times m}$ consist of $s^*$ linearly independent rows of the stoichiometric matrix $N$. Moreover let $E\in \R^{m \times q}$ be a matrix whose columns are the extreme rays of the polyhedral cone $\ker(N)\cap \R^m_{\geq 0}$, and for $\lambda\in\R^m$ denote by $ \diag( E\lambda )$ the matrix with the entries of $E\lambda$ in the diagonal, and zeros off-diagonal. Consider the  matrix
\begin{equation}\label{eq:convex}
 N' \diag( E\lambda ) B^\top
\end{equation}
(with $B$ the reactant matrix, see Subsection~\ref{crnsubsec}). Finally, let $\Lambda$ be the set of $\lambda\in \R^q_{\geq 0}$ such that $E\lambda \in \R^m_{>0}$.  (The particular choice of $N'$ will be irrelevant.)

 \begin{proposition}\label{prop:noTFPV}
 Let $G$ be a mass-action reaction network of codimension $s^*$. With the notation introduced above, assume $G$ belongs to one of the following cases:
 \begin{enumerate}[(a)]
 \item  The set of positive stationary points $\mathcal{V}_\k\subseteq \R^n_{>0}$ admits a {smooth} parametrization of the form 
$\R^{s^*}_{>0}  \xrightarrow{\varphi_\k}  \mathcal{V}_\k$, 
with $\im (\varphi_\k)= \mathcal{V}_\k$ for all $\k\in \R^m_{>0}$.
\item The reaction network is \emph{injective} \cite{Feliu-inj}, hence the coefficient $\sigma_{n-s^*}(x,\k)$ of $\tau^{s^*}$ of the characteristic polynomial  of the Jacobian of system \eqref{eq:ODE1} is a polynomial in $x$ and $\k$ with only non-negative coefficients. 
\item For all $\lambda\in \Lambda$, at least  one of the minors of $N' \diag( E\lambda ) B^\top$ is non-zero.
\end{enumerate}
Then,  there are no TFPVs $\widehat\k\in \R^m_{>0}$ for which some {irreducible} component $Z$ of the critical variety intersects the positive orthant. 
  \end{proposition}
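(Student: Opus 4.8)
The plan is to show that each of the three hypotheses forces the key TFPV conditions (iv)--(v) from the algorithmic characterization to be incompatible with a critical point lying in $\R^n_{>0}$. Recall that a TFPV $\widehat\k$ for dimension $s>s^*$ requires, at some $x_0$ in the critical manifold, that $h(x_0,\widehat\k)=0$ and that $\sigma_n(x_0,\widehat\k)=\cdots=\sigma_{n-s+1}(x_0,\widehat\k)=0$; since $s>s^*$, this in particular forces $\sigma_{n-s^*}(x_0,\widehat\k)=0$ as well (because $n-s^*>n-s$, wait --- one must be careful: $n-s+1\le n-s^*$ iff $s^*\le s-1$ iff $s>s^*$, so indeed $\sigma_{n-s^*}$ is among the coefficients required to vanish). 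Moreover $x_0\in Z\cap\R^n_{>0}$ means $x_0$ is a positive stationary point. So the heart of the argument is: under any of (a), (b), (c), the quantity $\sigma_{n-s^*}(x_0,\widehat\k)$ cannot vanish at a positive stationary point $x_0$ with $\widehat\k\in\R^m_{>0}$.

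For case (b) this is essentially immediate: injectivity gives $\sigma_{n-s^*}(x,\k)$ as a polynomial in $x$ and $\k$ with only non-negative coefficients, and --- this is the point one must pin down --- at least one coefficient is strictly positive (so the polynomial is not identically zero; otherwise the network would have excess first integrals contradicting the definition of $s^*$). Hence $\sigma_{n-s^*}(x_0,\widehat\k)>0$ for $x_0\in\R^n_{>0}$, $\widehat\k\in\R^m_{>0}$, a contradiction. For case (a), I would argue that the existence of a smooth parametrization $\varphi_\k:\R^{s^*}_{>0}\to\mathcal V_\k$ with full image, valid for \emph{all} $\k\in\R^m_{>0}$, pins the local dimension of the positive stationary variety to exactly $s^*$ at every positive point and for every positive rate vector; consequently the critical variety through $x_0$ cannot have an irreducible component of dimension $s>s^*$ meeting $\R^n_{>0}$, directly contradicting Definition~\ref{def:TFPV}(i). (One should be slightly careful that $\im(\varphi_\k)=\mathcal V_\k$ together with smoothness bounds $\dim\mathcal V_\k\le s^*$; combined with the generic rank hypothesis on $D_1h$ giving $\dim\ge s^*$ generically, the component through $x_0$ has dimension exactly $s^*$.) For case (c), I would use the standard convex-parameter / flux-cone description of positive stationary points: $x_0\in\R^n_{>0}$ is a stationary point of \eqref{eq:ODE1} iff the flux vector $\diag(\widehat\k)x_0^B\in\ker N\cap\R^m_{>0}$, hence equals $E\lambda$ for some $\lambda\in\Lambda$. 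The Jacobian at $x_0$ is $N\diag(\text{flux})\,(\text{something})$ --- more precisely $D_1h(x_0,\widehat\k)=N\diag(\widehat\k)\,D_1(x^B)|_{x_0}$, and $D_1(x^B)|_{x_0}=\diag(\dots)B^\top\diag(1/x_0)$ up to arranging; the upshot is that the relevant $s^*\times s^*$ minors of the Jacobian are, up to nonzero positive factors, the minors of $N'\diag(E\lambda)B^\top$. If one of these is nonzero, then $\sigma_{n-s^*}(x_0,\widehat\k)\ne0$ (as $\sigma_{n-s^*}$ is, up to sign, the sum of the $(n-s^*)$-minors of $D_1h$, and on the stoichiometric subspace this reduces to the $s^*\times s^*$ minors coming from $N'$), again contradicting $\sigma_{n-s^*}(x_0,\widehat\k)=0$.

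The main obstacle I anticipate is case (c): getting the bookkeeping exactly right between the full Jacobian $D_1h(x_0,\widehat\k)\in\R^{n\times n}$ (which is singular, with kernel transverse to the stoichiometric subspace), its restriction/projection to the $s^*$-codimensional picture, and the claim that $\sigma_{n-s^*}$ equals the sum of the relevant minors, up to a nonzero diagonal scaling by coordinates of $x_0$ and by $\widehat\k$. One needs the Cauchy--Binet-type identity expressing the $(n-s^*)$-th elementary symmetric function of eigenvalues in terms of the minors indexed by the independent rows $N'$, and one must verify the factorization $D_1(x^B)|_{x_0}=\diag(\widehat\k^{-1}\cdot\text{flux})\,B^\top\,\diag(x_0)^{-1}$ so that the diagonal matrix appearing is exactly $\diag(E\lambda)$ after absorbing $\widehat\k$. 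I would isolate this as a lemma ("the $s^*\times s^*$ minors of the Jacobian at a positive stationary point are nonzero scalar multiples of the corresponding minors of $N'\diag(E\lambda)B^\top$"), prove it by direct computation, and then cases (b) and (c) both become statements that $\sigma_{n-s^*}(x_0,\widehat\k)\ne0$, while case (a) is handled separately by the dimension argument above; in all three the conclusion follows because $s>s^*$ forces $\sigma_{n-s^*}(x_0,\widehat\k)=0$.
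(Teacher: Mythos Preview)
Your approach is correct and essentially the same as the paper's, which is extremely terse: for (a) it simply says the parametrization forces $\dim\mathcal V_\k=s^*$; for (b) it uses sign-definiteness of $\sigma_{n-s^*}$ at positive arguments; and for (c) it cites \cite{Pascual:ACR} for the conclusion that the Jacobian has rank $n-s^*$ at every positive stationary point. Your flux-cone factorization $D_1h(x_0,\widehat\k)=N\,\diag(E\lambda)\,B^\top\,\diag(x_0)^{-1}$ is exactly what underlies that citation, so you are reconstructing the reference rather than diverging from the paper.

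One small correction in your handling of (c): a single nonzero maximal minor of $N'\diag(E\lambda)B^\top$ gives you $\operatorname{rank} D_1h(x_0,\widehat\k)=n-s^*$, and this already contradicts TFPV condition~(ii), which demands rank $n-s<n-s^*$ on $\widetilde Z$. It does \emph{not} by itself give $\sigma_{n-s^*}(x_0,\widehat\k)\ne 0$, because $\sigma_{n-s^*}$ is (up to sign) the sum of the \emph{principal} $(n-s^*)$-minors of $D_1h$, and these could cancel even when a non-principal minor of that size survives. So finish case~(c) via the rank statement directly, not through $\sigma_{n-s^*}$; this also spares you the Cauchy--Binet bookkeeping you anticipated as the main obstacle. (Relatedly, the relevant minors have size $(n-s^*)\times(n-s^*)$, matching $\operatorname{rank} N$, rather than $s^*\times s^*$.)
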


\begin{proof}
(a) The parametrization gives that the dimension of $\mathcal{V}_\k$ is $s^*$ for all $\k\in \R^m_{>0}$.  (b) For $\widehat\k$ to be a TFPV, we require $\sigma_{n-s^*}({x_0},\widehat\k)=0$ for some $x_0\in Z$, which occurs only  if  $\widehat\k$  or $x_0$  belong to a coordinate subspace. (c) The condition implies that the Jacobian of \eqref{eq:ODE1} has rank $n-s^*$ at any positive stationary point \cite{Pascual:ACR}. 
\end{proof}

\begin{remark} 
We make a few observations regarding the relevance of the criteria  in Proposition~\ref{prop:noTFPV}.
\begin{itemize}
\item Condition (c) holds for surprisingly many networks {and is computationally easy to verify}. When $\Lambda=\R^q_{>0}$, which occurs often, then condition (c) holds if there is one minor with all non-zero coefficients of the same sign. 
\item Many realistic reaction networks admit parametrizations in the sense of Proposition~\ref{prop:noTFPV}(a): Among these are reaction networks admitting toric steady states \cite{PerezMillan}, complex-balancing equilibria \cite{horja,feinberg-balance,Craciun-Sturmfels}  (see also Subsection~\ref{sec:CB}), and there are many reaction networks  for which parametrizations can be found using linear elimination of some variables in terms of the rest \cite{Fel_elim,saez_elimination} {(see \cite{FeliuPlos} for a short account on how to find parametrizations). }
\item Injective reaction networks admit at most one equilibrium in each SCC \cite{craciun-feinbergI,Feliu-inj}, and several criteria, in addition to the one stated in Proposition~\ref{prop:noTFPV}(b), have been established.  These criteria involve graphical conditions \cite{banaji-craciun1,banaji-craciun2} and sign vectors \cite{MullerSigns}.
\end{itemize}
\end{remark}

To include TFPVs with  critical manifold intersecting the positive orthant, it is appropriate (and necessary in the cases {covered in Proposition~\ref{prop:noTFPV}}) to deviate from the convention in Definition~\ref{crndef} and allow the rate parameters to be zero, thus change the parameter range of $\k$ to $\R_{\geq 0}^m$. Passing  from generic $\kappa\in\mathbb R^m_{>0}$ to a special $\widehat \kappa \in\mathbb R^m_{\geq 0}$  may be  seen as considering a subnetwork of the original reaction network.  To indicate this, we make the following definition.

\begin{definition}\label{rem:switching}
Let $\kappa\in\R_{\geq 0}^m$. We denote by $G(\k)$ the subnetwork obtained from $G$ by removing the reactions with indices in $\{1,\ldots,m\}\setminus \text{supp}(\k)$, that is, the $i$-th reaction is removed if $\k_i=0$, for $i=1,\dots,m$. Isolated nodes are not removed from  $G(\k)$, and hence  $G$ and $G(\k)$ have the same set of complexes and species. 
\end{definition}

Note that $G(\k)=G(\widetilde\k)$ as long as $ \text{supp}(\k)=\text{supp}(\widetilde\k)$.

\begin{proposition}\label{prop:generic_dimension}
Let $G$ be a mass-action reaction network of codimension $s^*$. Let $h(x,\k)$ denote the right-hand side of \eqref{eq:ODE2}. 
\begin{enumerate}[(a)]
\item Let $\k^*\in \R^m_{>0}$. If there exists $x^*\in \R^n_{>0}\cap \mathcal{V}(h(\cdot,\k^*))$  such that 
$D_1 h(x^*,\k^*)$ has rank   $n-s^*$ (respectively, additionally $n-s^*$ eigenvalues with negative real part), 
then  the same holds for a norm-open neighborhood of $\k^*$, and thus for a Zariski dense subset of $\R^m_{>0}$ containing $\k^*$. In particular, an irreducible component of $\mathcal{V}(h(\cdot,\k^*))$ has dimension $s^*$ and intersects the positive orthant. 
\item If $\k^*\in\mathbb R^m_{\geq 0}$  is a TFPV for dimension $s>s^*$ with $s$ the codimension of $G(\k^*)$, then the minimal   coordinate subspace containing $\k^*$ {is contained in} $W_s$. 
\end{enumerate}
\end{proposition}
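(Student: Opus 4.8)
The plan is to prove the two parts separately, as they have rather different flavors: (a) is a semicontinuity/openness argument, while (b) is a Zariski-closure argument that leverages the definition of $W_s$ as the Zariski closure of $\Pi_s$.

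For part (a), the plan is as follows. The rank of $D_1h(x,\k)$ at a point is at least $n-s^*$ on a neighborhood of any point where it equals $n-s^*$, since the non-vanishing of some $(n-s^*)\times(n-s^*)$ minor is an open condition in $(x,\k)$; combined with the global upper bound $\operatorname{rank} D_1h \le n-s^*$ (valid because the rows of $h$ span a space of dimension at most $n-s^*$ — the stoichiometric subspace, by the discussion following \eqref{sys-tpwI} and Lemma~\ref{lemma:invariant}), we get that the rank equals $n-s^*$ on an open neighborhood. To keep a stationary point nearby I would invoke the implicit function theorem: at $x^*$, after passing to a stoichiometric compatibility class through $x^*$ to remove the kernel directions, the restricted vector field has an invertible Jacobian, so there is a smooth branch of stationary points $x(\k)$ depending on $\k$ near $\k^*$, staying in $\R^n_{>0}$ by continuity. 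The eigenvalue condition (the $n-s^*$ nonzero eigenvalues having negative real part) is then preserved on a possibly smaller neighborhood by continuity of the spectrum; since the Jacobian already has the right rank there, zero is an eigenvalue of multiplicity exactly $s^*$ and the remaining ones vary continuously. Finally, a norm-open subset of $\R^m_{>0}$ is Zariski dense, and the claim about the dimension of an irreducible component of $\mathcal V(h(\cdot,\k^*))$ follows since $x^*$ is a smooth point of the stationary variety at which the Jacobian of the restricted system is invertible, forcing the local dimension to be exactly $s^*$.

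For part (b), the key observation is that $\Pi_s$ (the set of TFPVs for dimension $s$) only depends on $\k^*$ through the subnetwork $G(\k^*)$, or rather through the support of $\k^*$: indeed the defining conditions (iv)–(vi) of a TFPV involve $h(\cdot,\k^*)$ and its Jacobian, and $h(\cdot,\k^*)$ only sees the reactions in the support. So I would argue that if $\k^*\in\Pi_s$, then every $\widetilde\k\in\R^m_{\ge 0}$ with $\operatorname{supp}(\widetilde\k)\subseteq\operatorname{supp}(\k^*)$ for which the critical component still has dimension $s$ and remains attracting is also in $\Pi_s$ — more carefully, within the minimal coordinate subspace $L$ containing $\k^*$ (i.e. $L=\{\k:\ \operatorname{supp}(\k)\subseteq\operatorname{supp}(\k^*)\}$), the TFPV conditions cut out a relatively open subset containing $\k^*$ by the openness argument of part (a) applied to the subnetwork $G(\k^*)$: here $s$ plays the role of $s^*$ for $G(\k^*)$, so the relevant rank is $n-s$, and a relatively Zariski-dense subset of $L$ consists of TFPVs for dimension $s$. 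Hence $\k^*$ lies in the closure within $L$ of $\Pi_s\cap L\subseteq\Pi_s$, so $L\subseteq\overline{\Pi_s}^{Zar}=W_s$.

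The main obstacle I anticipate is the bookkeeping in part (b) around which conditions are genuinely open. The eigenvalue sign condition (iii)/(v)(2) is open, and the rank condition is open as in (a), but condition (i)/(iv) — that the critical variety has an irreducible component of the \emph{correct} dimension $s$ through the relevant stationary point — requires care: one must ensure that perturbing $\k^*$ within $L$ does not drop or raise the dimension of that component. This is exactly where part (a), applied to $G(\k^*)$ with codimension $s$, does the work: the existence of a positive stationary point with Jacobian of rank exactly $n-s$ pins the local dimension of the stationary variety to $s$ on a neighborhood. I would need to check that the point $x_0\in\widetilde Z$ from the TFPV conditions can be taken in the positive orthant, or else run the implicit-function argument on the appropriate coordinate subspace if the critical manifold lies on a boundary face; handling that case distinction cleanly is the fiddly part.
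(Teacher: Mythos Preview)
Your proposal is correct and follows essentially the same route as the paper. In part (a) the paper encodes the negative-real-part condition via positivity of Hurwitz determinants rather than direct continuity of the spectrum, but otherwise the argument (openness of the rank condition, Implicit Function Theorem to track the stationary point, Euclidean open $\Rightarrow$ Zariski dense) is the same; in part (b) the paper likewise applies (a) to $G(\k^*)$ restricted to the minimal coordinate subspace $C$, obtains a relatively norm-open (hence Zariski-dense in $C$) set of TFPVs, and concludes $C\subseteq W_s$. The positivity issue you flag for $x_0$ is not addressed explicitly in the paper's proof either; it is implicitly assumed that the critical manifold furnishes a point in $\R^n_{>0}$ to which (a) applies.
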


\begin{proof} (a) 
We will use that $D_1 h(x^*,\k^*)$ has $n-s^*$ eigenvalues with negative real part, if and only if the corresponding $n-s^*$ Hurwitz determinants {of its characteristic polynomial, divided by $\tau^{s^*}$,} are positive, {see Gantmacher \cite[Ch. V, section 6]{Gant}.}
The rank of $D_1 h(x^*,\k^*)$ being $n-s^*$ implies that an irreducible component of $\mathcal{V}(h(\cdot,\k^*))$ has dimension $s^*$ and intersects the positive orthant \cite[\S9.6 Thm 9]{cox:little:shea}.
Let $V$ be the real algebraic variety in the variables $x,\k$ consisting of points where $h(x,\k)=0$ and $D_1 h(x,\k)$ has rank strictly smaller than $n-s^*$, respectively, at least one of the Hurwitz determinants vanishes. 
By hypothesis, there exists $(x^*,\k^*)\in \R^{n+m}_{>0} \setminus V$ satisfying  
$h(x^* ,\k^*)=0$. Let $U\subseteq \R^{n+m}_{>0} \setminus V$ be an open Euclidean ball containing $(x^*,\k^*)$. 
The intersection of $U$  and the zero set of $h$, which is non-empty, consists of points $(x,\k)$ such that $x\in \mathcal{V}(h(\cdot,\k))$ and the Jacobian has maximal rank $n-s^*$, respectively, all Hurwitz determinants are positive. 
The projection $\widehat{U}$ of $U$ onto $\R^m_{>0}$ in the variable  $\k$ contains a non-empty open Euclidian ball of parameters $\k_0$ for which there exists $x_0 \in \R^n_{>0}$ such that $D_1 h(x_0,\k_0)$ has rank   $n-s^*$, respectively, additional $n-s^*$ eigenvalues with negative real part.  {By the Implicit Function Theorem 
applied to  $h$ at $(x^*,\k^*)$, $\widehat{U}$  contains an open ball centred at $\k^*$ such that 
$ \R^n_{>0}\cap \mathcal{V}(h(\cdot,\k))\neq \emptyset$ for all $\k$ in the ball. As any Euclidean ball is Zariski dense, this concludes the proof of (a). }

(b) Let $C$ be the minimal   coordinate subspace containing $\k^*$. The parameter $\k^*$ and the reaction network $G(\k^*)$ satisfy the hypotheses of (a), after restricting $\R^m_{>0}$ to   $C$. Therefore, there exists {a norm-open and} Zariski dense set $U$ (relative to $C$) such that  any $\k'\in U\subseteq C\cap \R^m_{>0}$  is a TFPV for dimension $s$ {and thus $U\subseteq W_s$.}
Since $U$  is Zariski dense in $C$, its Zariski closure is $C$ {and it follows that $C\subseteq W_s$}. 
\end{proof}

Proposition~\ref{prop:generic_dimension}(b) does not imply that all rate parameters in the coordinate subspace are TFPVs given that one is a TFPV, but only that this is the case in an open set relative to the coordinate subspace. 
The next example illustrates this.

\begin{example}
Consider the following  mass-action reaction network,
\begin{align*}
X_1+X_2 & \cee{->[\k_1]} 2X_1,  & X_1+2X_2 & \cee{->[\k_3]} 3X_1,  &  0  & \cee{<=>[\k_5][\k_6]} X_1, \\
X_1+X_2 & \cee{->[\k_2]} 2X_2, & X_1+2X_2 & \cee{->[\k_4]} 3X_2.
\end{align*}
The associated ODE system in $\R^2_{\geq 0}$ is
\begin{align*}
\dot{x}_1 &= (\k_1-\k_2)x_1x_2 + (2\k_3  -\k_4 )x_1 x_2^2 + \k_5 - \k_6 x_1,\\ 
\dot{x}_2 &= (-\k_1 + \k_2)x_1x_2  +(-2\k_3 + \k_4 )x_1x_2^2.
\end{align*}
The  codimension of the reaction network is $s^*=0$, and the reaction network has   one positive equilibrium
$(\tfrac{\k_5}{\k_6}, \tfrac{ \k_2-\k_1}{2\k_3-\k_4})$, provided the second entry is positive. 
Consider a parameter value of the form $\widehat\k=(\k_1,\k_2,\k_3,\k_4,0,0)$, {which corresponds to removing the pair of reactions $0  \cee{<=>} X_1$}. Then, the stoichiometric subspace of $G(\widehat\k)$  has codimension $s=1$, and the stationary variety consists of the two coordinate axes together with the line $x_2=\tfrac{\k_2-\k_1}{2\k_3-\k_4}$,  provided this expression is positive. In this case, there is a critical manifold of dimension one in $\R^2_{>0}$. 
The line intersecting the positive orthant is attracting if $\k_1>\k_2$ and repelling if $\k_2>\k_1$. Hence, $\widehat\k$ is a TFPV for dimension one if and only if $\k_1>\k_2$ and $\k_4>2\k_3$.  We also have that the 
minimal coordinate subspace $C$ containing $\widehat\k$ belongs to $W_1$.

If we now consider   $\widehat\k=(\k_1,0,0,\k_4,0,0)$, the codimension of $G(\widehat\k)$  is also $s=1$, and the positive part of the stationary variety consists of the  attracting line $x_2=\tfrac{\k_1}{\k_4}$. Hence,  
 $\widehat\k$ is a TFPV for dimension one. 
 The minimal coordinate subspace containing $\widehat\k$ is not an irreducible component of $W_1$, as  it is a proper Zariski closed set of  the coordinate subspace $C$.
 \end{example}

In what follows we consider   TFPVs   for two classes of reaction networks, namely first order reaction networks and complex-balanced reaction networks.  Due to special properties of the Laplacian matrix, TFPVs for complex-balanced reaction networks can be identified. 
Our results build on the understanding of the kernel of $A(\k)$ in \eqref{eq:ODE2}. Therefore, we first review  key results about Laplacian matrices and especially their kernel, using a graphical approach.

\subsection{Some properties of Laplacian matrices}

In this subsection we recall and review some properties of Laplacian and compartmental matrices.  For the following known facts refer e.g. to Jacquez and Simon \cite[Subsection 4.1]{JaSi}.

The Laplacian matrix of a directed graph (and thus the Laplacian $A(\k)$ of a reaction network) 
 is a compartmental matrix. We recall some notions.
\begin{itemize}
\item  A quadratic matrix with real entries is called a compartmental matrix if all its off-diagonal entries are $\geq 0$ and all its column sums are $\leq 0$. 
\item Given non-negative real numbers $\sigma_{ij}$, $1\leq i,\, j\leq n$, and $\tau_k$, $1\leq k\leq n$, the matrix
\begin{equation}\label{compmat}
L(\sigma, \tau):=
\begin{pmatrix}-\sum_\ell\sigma_{\ell 1 }-\tau_1& \sigma_{12} & \cdots & \sigma_{1d}\\
                                                                    \sigma_{21}&  & \ddots &\vdots\\
                                                                    \vdots& \ddots &  \ddots  & \sigma_{d-1,d}\\
                                                                     \sigma_{d1}&\cdots &  \sigma_{d,d-1} & -\sum_\ell\sigma_{ \ell d}-\tau_d\end{pmatrix}\in\mathbb R^{d\times d},
\end{equation}
with $\sigma=(\sigma_{ij})$ and $\tau=(\tau_k)$, is compartmental. In turn, every compartmental $d\times d$ matrix has a representation of the form \eqref{compmat}, with uniquely determined $\sigma_{ij}$ and $\tau_k$.
\item The Laplacian $A(\k)$ of a reaction network satisfies $\sigma=\k$ and $\tau=0$. Hence, $A(\k)=L(\k,0)$ and column sums are zero. \end{itemize}

\begin{lemma}\label{compmatlem} Let $L(\sigma,\tau)$ be a compartmental matrix as in \eqref{compmat}. Then,
all eigenvalues of $L(\sigma,\tau)$ have non-positive real part, and any eigenvalue with real part zero is equal to zero. Moreover $\mathbb R^n$ is the direct sum of the kernel and the image of $L(\sigma,\tau)$.
\end{lemma}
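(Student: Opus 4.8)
The statement has two parts: (i) all eigenvalues of $L(\sigma,\tau)$ have non-positive real part, with any purely imaginary eigenvalue being $0$; and (ii) $\R^d$ decomposes as the direct sum of $\ker L(\sigma,\tau)$ and $\im L(\sigma,\tau)$. (Note: the statement writes $\R^n$, but the matrix is $d\times d$; I will work in $\R^d$.) The plan is to derive (i) from the Gershgorin circle theorem applied to the \emph{transpose} $L(\sigma,\tau)^\top$, and then to derive (ii) as a purely linear-algebraic consequence of (i), namely that the eigenvalue $0$ (if present) is semisimple.

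\textbf{Step 1: Eigenvalue location via Gershgorin.} Since $L(\sigma,\tau)$ and $L(\sigma,\tau)^\top$ have the same eigenvalues, I would apply Gershgorin to $M := L(\sigma,\tau)^\top$, whose rows are the columns of $L(\sigma,\tau)$. The $j$-th diagonal entry of $M$ is $m_{jj} = -\sum_{\ell\neq j}\sigma_{\ell j} - \tau_j \le 0$, and the $j$-th off-diagonal row sum is $\sum_{i\neq j}|m_{ji}| = \sum_{i\neq j}\sigma_{ij} = -m_{jj} - \tau_j$. Hence the $j$-th Gershgorin disc is centred at $m_{jj}\le 0$ with radius $-m_{jj}-\tau_j$; it is contained in the closed disc of radius $|m_{jj}|$ about $m_{jj}$, which lies in the closed left half-plane $\{\operatorname{Re}z\le 0\}$ and touches the imaginary axis only at $0$ (and only when $\tau_j=0$ and the radius equals $|m_{jj}|$). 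Therefore every eigenvalue $\lambda$ of $L(\sigma,\tau)$ satisfies $\operatorname{Re}\lambda\le 0$, and if $\operatorname{Re}\lambda=0$ then $\lambda$ lies in the intersection of all the relevant discs with the imaginary axis, forcing $\lambda=0$. This proves the first two assertions.

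\textbf{Step 2: The zero eigenvalue is semisimple.} To get the direct-sum decomposition $\R^d = \ker L \oplus \im L$, it suffices to show that the algebraic multiplicity of the eigenvalue $0$ equals its geometric multiplicity, equivalently that there is no nontrivial Jordan block for $0$, equivalently $\ker L = \ker L^2$. The cleanest route is a Lyapunov-type argument: consider the linear ODE $\dot y = L(\sigma,\tau)^\top y$ (working with the transpose, which is row-diagonally-dominant-like) and the function $V(y) = \max_j |y_j|$, or alternatively use the $\ell^1$ argument on $\dot x = L x$ with $V(x) = \sum_j |x_j|$, which is non-increasing along trajectories because column sums are $\le 0$. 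Boundedness of all trajectories (from Step 1, since there are no eigenvalues with positive real part and any imaginary-axis eigenvalue is $0$, one only needs to rule out a Jordan block at $0$, which would produce linearly growing solutions) then forces $0$ to be semisimple. Concretely: if $0$ had a $2\times 2$ Jordan block, there would be vectors $v, w$ with $Lv=0$, $Lw=v\neq 0$, giving the solution $x(t) = w + tv$, which is unbounded, contradicting that $V(x(t))=\sum_j|x(t)_j|$ is non-increasing. Hence $0$ is semisimple, $\ker L \cap \im L = \{0\}$, and by dimension count $\R^d = \ker L \oplus \im L$.

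\textbf{Main obstacle.} Step 1 is routine. The part requiring care is Step 2: one must argue that no Jordan block of size $\ge 2$ is attached to the eigenvalue $0$. The $\ell^1$-monotonicity argument above handles this cleanly for $L$ itself (using that the column sums of $L$ are $\le 0$, so $\frac{d}{dt}\sum_j |x_j| = \sum_j \operatorname{sgn}(x_j)\,\dot x_j \le \sum_j \dot x_j = (\text{column sums})\cdot x \le 0$ wherever the signs are constant, with the standard Dini-derivative justification at sign changes). One could alternatively cite the structure theory of compartmental/M-matrix-type matrices (e.g. Jacquez and Simon \cite[Subsection 4.1]{JaSi}) for the semisimplicity of the zero eigenvalue, which is exactly the reference already invoked in the surrounding text; in a write-up I would do both—state the Gershgorin bound explicitly and then invoke the cited monograph (or the short $\ell^1$ argument) for the splitting.
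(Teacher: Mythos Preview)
The paper does not actually prove this lemma; it states it as a known fact, referring to Jacquez and Simon \cite[Subsection 4.1]{JaSi}. Your proposed argument therefore supplies more than the paper does, and the overall strategy---Gershgorin on the transpose for the spectral location, then an $\ell^1$-Lyapunov argument to exclude nontrivial Jordan blocks at $0$---is a standard and correct route.

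One computation in Step~2 needs repair. The displayed chain
\[
\sum_j \operatorname{sgn}(x_j)\,\dot x_j \;\le\; \sum_j \dot x_j \;=\; (\text{column sums})\cdot x \;\le\; 0
\]
is wrong as written: the first inequality fails unless the negative-sign coordinates happen to have $\dot x_j\ge 0$, and the last inequality needs $x\ge 0$. The correct estimate uses instead that the \emph{off-diagonal} entries of $L$ are non-negative. Writing $L=(L_{ji})$, for $x$ with no zero coordinate one has
\[
\operatorname{sgn}(x_j)\,(Lx)_j
= L_{jj}\,|x_j| + \sum_{i\neq j} L_{ji}\,\operatorname{sgn}(x_j)\,x_i
\;\le\; L_{jj}\,|x_j| + \sum_{i\neq j} L_{ji}\,|x_i|,
\]
and summing over $j$ gives
\[
\frac{d}{dt}\sum_j |x_j| \;\le\; \sum_i |x_i|\Bigl(\sum_j L_{ji}\Bigr) \;\le\; 0,
\]
since each column sum $\sum_j L_{ji}$ is $\le 0$. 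With this correction (and the Dini-derivative caveat you already noted at sign changes), the semisimplicity argument goes through: a nontrivial Jordan block at $0$ would produce the real solution $x(t)=w+tv$ with $\sum_j|x_j(t)|\to\infty$, contradicting the monotonicity just established. Everything else in your outline is fine.
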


Consider a mass-action  reaction network $G$.  Let $G_1,\dots,G_r$  be the connected components of $G$ and further order the set of complexes 
 according to the  connected component they belong to. 
 If $A_i(\k)$ stands for the Laplacian matrix of $G_i$, then $A(\k)$ becomes a block diagonal matrix with $r$ blocks,
\[ A(\k)= \begin{pmatrix}
A_1(\k) & 0 & \dots & 0 \\
0 & A_2(\k)  & \dots & 0  \\
\vdots & \vdots & \ddots & \vdots \\
0 & 0 & \dots & A_r(\k) 
\end{pmatrix}\in \R^{d\times d}.\]

The form of the kernel of the Laplacian matrix $A(\k)$ of a digraph with $\k\in \R^m_{>0}$ is  well known, in particular, in the context of reaction networks \cite[Thm 16.4.2]{feinbergbook}. It derives from the Matrix-Tree theorem \cite{Tutte-matrixtree,mirzaev-laplacian,ChaikenKleitman}. 
\begin{itemize}
\item The dimension of the kernel of $A(\k)$ agrees with the number of terminal strongly connected components and is independent of $\k\in \R^m_{>0}$. 
\item If the digraph is strongly connected, then $\dim \ker A(\k)=1$ and a generator of $\ker A(\k)$ is given by the sequence of signed principal  minors (which are positive). 
\item If the digraph is not strongly connected, then any complex in the support of a vector in $\ker A(\k)$   belongs to a terminal strongly connected component. 
Furthermore, a basis of $\ker A(\k)$ can be chosen such that the support of each vector is exactly   one terminal strongly connected component and the non-zero entries  are positive. These entries arise as the signed principal  minors of the restriction of the matrix to the nodes in the component.
\item The vector $e=(1,\dots,1)$ belongs to the left-kernel of $A(\k)$, and generates it when the digraph has one terminal strongly connected class.
\end{itemize}

These facts lead to the following lemma. 

\begin{lemma}\label{lem:Laplacian}
Let $G$ be a mass-action reaction network with labelling $\k\in \R^m_{>0}$. 
Let $G_1,\dots,G_r$ be the connected components of $G$ and assume that the set of complexes is ordered in accordance with the components. Let $T$  be the number of terminal strongly connected components of $G$.

Then, the rank of $A(\k)$ does not depend on the choice of $\k\in \R^m_{>0}$, and in particular
\begin{enumerate}[(a)]
\item $\dim \ker A(\k)= T$.
\item $\ker A(\k)$ has non-trivial intersection with the  positive orthant $\mathbb R_{>0}^d$, if and only if $G$ is weakly reversible.
\item 
 The left-kernel of $A(\k)$ contains the following row vectors, one for each connected component: \[
\big(e^{(1)}, 0,\ldots,0\big), \ldots, \big(0,\ldots,0, e^{(r)}\big), \quad\text{with}\quad e^{(i)}=\left(1,\ldots,1\right)
\]
of size the number of nodes of $G_i$.
 If each connected component of $G$ has exactly one terminal strongly connected component, then these vectors span the left-kernel of $A(\k)$.
 \end{enumerate}
\end{lemma}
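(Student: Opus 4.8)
\textbf{Proof plan for Lemma \ref{lem:Laplacian}.}

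The plan is to assemble the statement from the block-diagonal decomposition of $A(\k)$ together with the itemized facts on Laplacian kernels recalled just above the lemma. First I would reduce everything to the connected (hence single-block) case: since $A(\k)=\diag(A_1(\k),\dots,A_r(\k))$ with the complexes ordered by component, we have $\ker A(\k)=\bigoplus_i \ker A_i(\k)$ and similarly for the left-kernel and the rank, so each of (a), (b), (c) follows once the corresponding statement is known for each $G_i$. The independence of $\rank A(\k)$ on $\k\in\R^m_{>0}$ is then immediate from the recalled fact that $\dim\ker A_i(\k)$ equals the number of terminal strongly connected components of $G_i$, a combinatorial quantity not depending on $\k$.

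For part (a), summing $\dim\ker A_i(\k)$ over $i$ gives the total number $T$ of terminal strongly connected components of $G$, since each terminal strongly connected component lies in exactly one connected component. For part (b), I would argue as follows. If $G$ is weakly reversible, every connected component $G_i$ is strongly connected, so by the recalled Matrix-Tree facts $\ker A_i(\k)$ is spanned by a vector of positive signed principal minors, i.e.\ a strictly positive vector; concatenating these gives a strictly positive vector in $\ker A(\k)$, so the intersection with $\R^d_{>0}$ is non-trivial. Conversely, suppose $v\in\ker A(\k)\cap\R^d_{>0}$. Writing $v=(v^{(1)},\dots,v^{(r)})$ blockwise, each $v^{(i)}\in\ker A_i(\k)$ is strictly positive, in particular has full support; but the recalled structure of $\ker A_i(\k)$ says that the support of any kernel vector is contained in the union of the terminal strongly connected components of $G_i$, so $G_i$ must itself be a single terminal strongly connected component, i.e.\ $G_i$ is strongly connected. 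Hence every connected component of $G$ is strongly connected, which is the definition of weak reversibility.

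For part (c), the recalled fact that $e=(1,\dots,1)$ lies in the left-kernel of the Laplacian of any digraph (equivalently, $A(\k)$ has zero column sums, as noted in the bullet list on compartmental matrices) applies to each $A_i(\k)$, giving $e^{(i)}A_i(\k)=0$ with $e^{(i)}=(1,\dots,1)$ of the appropriate size; padding with zeros in the other blocks yields the $r$ claimed row vectors in the left-kernel of $A(\k)$. For the spanning statement, if each $G_i$ has exactly one terminal strongly connected component then by (a) (applied to $G_i$) $\dim\ker A_i(\k)=1$, so $\dim$ of the left-kernel of $A_i(\k)$ is also $1$ and is therefore spanned by $e^{(i)}$; the left-kernel of the block-diagonal $A(\k)$ is the direct sum of these one-dimensional spaces, which is exactly the span of the $r$ vectors exhibited. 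The main obstacle, such as it is, is purely expository: the proof is essentially a bookkeeping exercise in transporting the recalled single-digraph facts through the block decomposition, and the only point requiring a little care is the converse direction of (b), where one must invoke the support characterization of kernel vectors rather than just their dimension.
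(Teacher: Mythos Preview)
Your proposal is correct and takes essentially the same approach as the paper: both derive the lemma directly from the bulleted Matrix-Tree facts stated just before it, using the block-diagonal decomposition of $A(\k)$ to reduce to single connected components. The paper's own proof is a two-line reference to those facts, while you have simply spelled out the bookkeeping (and the support argument for the converse of (b)) in more detail.
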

\begin{proof}
(a-b)  are direct consequences of the properties of the kernel of $A(\k)$ discussed above.
(c) The column sums in each block $A_i(\k)$ are zero, as each submatrix is a Laplacian. The second part follows from (a), as $T=r$.
\end{proof}

With the notation in Lemma~\ref{lem:Laplacian}, if a connected component of $G$ has more than one terminal strongly connected component, then the vectors given in Lemma~\ref{lem:Laplacian}(c) do not form a basis of the left-kernel of $A(\k)$. To obtain a basis, one has to augment them by  vectors that might depend on the particular entries of $A(\k)$, that is, on $\k$; see Example \ref{ex:stoich} for an illustration.

\subsection{Complex-balancing and TFPVs}\label{sec:CB}

We now turn to an important class of reation networks called complex-balanced reaction networks and the existence of TFPVs for this class.  Complex-balanced reaction networks are characterised by their equilibria,  called \emph{complex-balanced} equilibria. 
According to Horn and Jackson \cite{horja}  (see also Feinberg \cite[Ch.~15ff.]{feinbergbook}), a \emph{positive} equilibrium $z\in\mathbb R_{>0}^n$ of \eqref{eq:ODE2} is   complex-balanced for the parameter value $\kappa^*$ if 
\[A(\kappa^*) {z^Y}=0.\]
By Lemma~\ref{lem:Laplacian}(b), the existence of a positive complex-balanced equilibrium implies that the reaction network is weakly reversible \cite[Proposition 16.5.7]{feinbergbook}. However, weak reversibility is not a sufficient condition. 
Since $z$ is an equilibrium for  system \eqref{eq:ODE2} if and only if $YA(\kappa)  {z^Y}=0$, one needs to understand the relation between ${\rm Ker}\,YA(\kappa)$ and ${\rm Ker}\,A(\kappa)$. Obviously the latter is a subset of the former.

The following proposition gathers  well-known facts \cite[Thm 15.2.2,Thm 15.2.4, Lemma 16.3.1]{feinbergbook}.

\begin{proposition}\label{prop:cb}
Let $G=(\mathcal{Y},\mathcal{R},\k)$ be a mass-action reaction network with $d$ complexes, $r$ connected components, and codimension $s^*$.
\begin{itemize}
\item
Let $e_1,\ldots,e_d$ denote the standard basis of $\mathbb R^d$ and let $\Delta:=\left\{{e_j-e_i \mid\, Y_i\rightarrow Y_j}\in{\mathcal R}\right\}\subseteq \mathbb R^d.$
Then 
\[
{\rm Ker}\,YA(\kappa)={\rm Ker}\,A(\kappa)\oplus \left({\rm Ker}\,Y\cap {\rm span}\,\Delta\right).
\]
The dimension $\delta$ of ${\rm Ker}\,Y\cap {\rm span}\,\Delta$ is called the \emph{deficiency},
and satisfies $\delta=d- (n-s^*)- r$.
\item  If system \eqref{eq:ODE2} admits a complex-balanced equilibrium in $\mathbb R^n_{>0}$, then every   SCC  contains precisely one positive equilibrium, which also is complex-balanced, and the Jacobian has $n-s^*$ eigenvalues with negative real part and $s^*$ zero eigenvalues (counted with multiplicity).
  As a consequence, the positive equilibria of the system form a manifold of dimension $s^*$. 
\end{itemize}
\end{proposition}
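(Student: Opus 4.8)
The plan is to derive all three assertions from the structure of the Laplacian recorded in Lemmas~\ref{compmatlem}--\ref{lem:Laplacian} together with the Horn--Jackson Lyapunov function; these are classical facts, going back to Horn and Jackson \cite{horja} and to Feinberg \cite{feinberg-balance,feinbergbook}, and for the second and third assertions the existence of a positive complex-balanced equilibrium already forces $G$ to be weakly reversible by Lemma~\ref{lem:Laplacian}(b). For the first assertion, note that each column of $A(\k)$ equals $\sum_{Y_j\to Y_\ell}\k_{\ell j}(e_\ell-e_j)$, so $\im A(\k)\subseteq\operatorname{span}\Delta$; and since each connected component of $G$ is connected, its edge differences span the hyperplane of zero-sum vectors in the corresponding coordinate block, so $\dim\operatorname{span}\Delta=d-r$. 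When $G$ is weakly reversible one has $\dim\im A(\k)=d-T=d-r$ by Lemma~\ref{lem:Laplacian}(a), hence $\im A(\k)=\operatorname{span}\Delta$. Since $\R^d=\ker A(\k)\oplus\im A(\k)$ by Lemma~\ref{compmatlem} and $A(\k)v\in\operatorname{span}\Delta$ for every $v$, one has $v\in\ker YA(\k)$ iff $A(\k)v\in\ker Y\cap\operatorname{span}\Delta$; splitting $v$ along $\ker A(\k)\oplus\im A(\k)$ and using that $A(\k)$ restricts to an isomorphism of $\operatorname{span}\Delta$ gives $\dim\ker YA(\k)=\dim\ker A(\k)+\dim(\ker Y\cap\operatorname{span}\Delta)$, with the two summands independent since a nonzero element of $\ker A(\k)$ has constant sign on each component and so is not zero-sum there; this is the asserted decomposition. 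For the dimension count, $Y$ maps $\operatorname{span}\Delta$ onto the stoichiometric subspace $\im N$, of dimension $n-s^*$, so $\delta=\dim(\ker Y\cap\operatorname{span}\Delta)=(d-r)-(n-s^*)=d-(n-s^*)-r$ by rank--nullity.

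Next I would fix a positive complex-balanced equilibrium $z$, so $A(\k)\,z^Y=0$, and consider $V(x)=\sum_{i=1}^n\bigl(x_i\ln(x_i/z_i)-x_i+z_i\bigr)$ on $\R^n_{>0}$. The Horn--Jackson computation yields $\dot V=\bigl(\ln x-\ln z\bigr)\cdot h(x)\le 0$, with equality exactly when $A(\k)\,x^Y=0$; since $h(x)=0$ forces $\dot V(x)=0$, every positive equilibrium is complex-balanced, the converse being trivial. Because $\ker A(\k)$ is, on each strongly connected component, one-dimensional and positively generated, for $x\in\R^n_{>0}$ the equation $A(\k)\,x^Y=0$ is equivalent to $x^{y_j}/z^{y_j}$ being constant along each component, i.e.\ to $\ln x-\ln z\in(\im N)^\perp$; hence the set of positive equilibria equals $\exp\!\bigl(\ln z+(\im N)^\perp\bigr)$, which, being the image of an affine subspace of dimension $\dim(\im N)^\perp=s^*$ under the diffeomorphism $\exp$, is an $s^*$-dimensional manifold. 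Finally, since $\nabla V(x)=\bigl(\ln(x_i/z_i)\bigr)_i$, a point of a given SCC $(x_0+\im N)\cap\R^n_{>0}$ lies in this set iff it is a critical point of the restriction of $V$ to that SCC; strict convexity of $V$ leaves at most one such point, while a coercivity argument as in \cite[Thm 15.2.4]{feinbergbook} produces at least one, so each SCC contains exactly one positive equilibrium.

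For the spectral statement, since $\im h\subseteq\im N$ the Jacobian $J:=D_1h(z)$ satisfies $\im J\subseteq\im N$ and hence has at least $s^*$ zero eigenvalues. For the remaining ones I would exploit the structure at a complex-balanced point: writing $J=YA(\k)\,\diag(z^Y)\,Y^\top\diag(1/z)$, the matrix $M:=A(\k)\,\diag(z^Y)$ is again a rescaled graph Laplacian with $M\mathbf 1=A(\k)z^Y=0$ and zero column sums, so its symmetric part is a symmetric negative semidefinite Laplacian, and conjugating $J$ by $\diag(\sqrt{z_i})$ exhibits $J$ as similar to $PMP^\top$ with $P$ real --- a matrix with negative semidefinite symmetric part. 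Combined with the fact that, $G$ being weakly reversible, the kernel of $M$ coincides with that of its symmetric part, one concludes (essentially \cite[Lemma 16.3.1]{feinbergbook}) that the nonzero eigenvalues of $J$ have negative real part, the zero eigenvalue is semisimple, and $\operatorname{rank} J=n-s^*$, so that the zero eigenvalue has multiplicity exactly $s^*$. I expect this last point --- extracting the precise spectral data, rather than merely reading Lyapunov stability off $\dot V\le 0$ --- to be the main obstacle, since it is the one place where one genuinely uses the algebraic rigidity of complex-balanced Laplacians beyond the soft Lyapunov argument; the remaining steps are bookkeeping with the Laplacian properties already in hand.
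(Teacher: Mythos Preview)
The paper does not prove this proposition; it is presented as a collection of well-known facts with references to Feinberg's monograph \cite[Thm~15.2.2, Thm~15.2.4, Lemma~16.3.1]{feinbergbook}. Your sketch follows exactly the classical Horn--Jackson--Feinberg line those references encode, so there is no alternative approach in the paper to contrast against.

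Two remarks on your argument. For the kernel decomposition you invoke $\im A(\k)=\operatorname{span}\Delta$, which requires $T=r$ (in particular it holds under weak reversibility), whereas the first bullet is stated without that hypothesis; your deficiency formula $\delta=d-(n-s^*)-r$, by contrast, is obtained in full generality from rank--nullity for $Y|_{\operatorname{span}\Delta}$ and does not need this. More delicately, the displayed identity fails as a literal equality of subspaces of $\R^d$: for $0\leftrightarrow X_1\leftrightarrow 2X_1$ one has $\ker Y\cap\operatorname{span}\Delta=\operatorname{span}\{(1,-2,1)\}$, and $(1,-2,1)\in\ker YA(\k)$ only when $\k_{21}+2\k_{12}=2\k_{32}+\k_{23}$. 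What your argument correctly delivers is the dimension count $\dim\ker YA(\k)=\dim\ker A(\k)+\delta$, with the second summand realised through the isomorphism $A(\k)|_{\im A(\k)}$; this is the substantive content, and the $\oplus$ in the proposition should be read in that spirit rather than as an internal direct sum, so your ``this is the asserted decomposition'' is right in substance if not to the letter. Your treatment of the second bullet is the standard one and is correct modulo the cited coercivity and spectral lemmas; your candid flag on the spectral step is apt, since that is indeed where the work in \cite[Lemma~16.3.1]{feinbergbook} lies.
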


If $G$ is weakly reversible and $\delta=0$, then all positive equilibria are complex-balanced, irrespective of the (positive) reaction rate constants. In general, there are $\delta$ algebraically independent relations on the rate parameters $\k$, characterizing when the reaction network admits positive complex-balanced equilibria.  These relations are explicit \cite{Craciun-Sturmfels,Dickenstein:2011p1112,feliu:node}. 
Complex-balanced equilibria form a manifold of dimension $s^*$, and the rank of the Jacobian of system \eqref{eq:ODE2} evaluated at the equilibrium is   $n-s^*$.

In the following theorem we use the notation $G(\widehat\k)$ introduced in Definition \ref{rem:switching}. 

\begin{theorem}\label{thm_redCB}
Let $G$ be a mass-action reaction network of  codimension $s^*$. 
Let $\widehat\k\in \R^m_{\geq 0}$ such that
\begin{itemize}
\item $G(\widehat\k)$ is weakly reversible of codimension   $s>s^*$.
\item The non-zero coordinates of $\widehat\k$ satisfy the relations for the existence of {positive} complex-balanced equilibria in $G(\widehat\k)$.
\end{itemize}
 Then, $\widehat\k$ is a TFPV for dimension $s$ of  system \eqref{eq:ODE2}. Furthermore, the minimal coordinate subspace containing $\widehat\k$ is contained in $W_s$. 
\end{theorem}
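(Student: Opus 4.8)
The plan is to verify the TFPV conditions (iv), (v), (vi) from the excerpt at a suitably chosen base point, using the decomposition $h(x,\widehat\k) = YA(\widehat\k)x^Y$ and Proposition~\ref{prop:cb}. First I would pass to the subnetwork $G(\widehat\k)$: since $\widehat\k_i = 0$ precisely kills the $i$-th reaction, the right-hand side of \eqref{eq:ODE2} for $G$ evaluated at $\widehat\k$ coincides with the right-hand side of \eqref{eq:ODE2} for $G(\widehat\k)$ with its (positive) rate vector $\k' := \widehat\k|_{\supp(\widehat\k)}$. By hypothesis $G(\widehat\k)$ is weakly reversible and its non-zero rates satisfy the complex-balancing relations, so $G(\widehat\k)$ admits a positive complex-balanced equilibrium $z \in \R^n_{>0}$. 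Take $x_0 = z$ and $Z$ the irreducible component of the critical variety through $z$. Then condition (iv), $h(z,\widehat\k) = 0$, is immediate, and Proposition~\ref{prop:cb} (second bullet, applied to $G(\widehat\k)$) tells us that the positive equilibria of $G(\widehat\k)$ form a manifold of dimension $s = \operatorname{codim} G(\widehat\k)$, and that $D_1 h(z,\widehat\k)$ has exactly $n-s$ eigenvalues with negative real part and $s$ zero eigenvalues counted with multiplicity. That is precisely condition (ii') of the excerpt, which — given (i) — is equivalent to (ii) and (iii); and it also gives (v)(1) (the last $s$ coefficients $\sigma_n,\dots,\sigma_{n-s+1}$ vanish since $0$ is an eigenvalue of multiplicity $s$) and (v)(2) (the remaining roots have negative real part). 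So the bulk of Definition~\ref{def:TFPV} is handed to us by complex-balancing theory.

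The two points that still need care are (i), that $Z$ has dimension exactly $s$, and (vi), the existence of $s$ independent local analytic first integrals at $z$. For (i): the equilibrium manifold of $G(\widehat\k)$ has dimension $s$ globally (Proposition~\ref{prop:cb}), and by complex-balancing it is the full positive stationary variety, locally cut out near $z$ by a submersion of rank $n-s$ (the rank statement for the Jacobian); hence the irreducible component $Z$ through $z$ has dimension $s$, and it meets the positive orthant by construction. For (vi): weak reversibility of $G(\widehat\k)$ means every connected component is strongly connected, so in particular every connected component has exactly one terminal strongly connected component; Lemma~\ref{lemma:invariant} then applies and every linear first integral of the $G(\widehat\k)$-system is stoichiometric. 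There are $s$ independent stoichiometric first integrals (by definition of codimension), and these are global analytic first integrals, hence in particular $s$ independent local analytic ones at $z$. This closes (vi). Assembling (iv), (v) with the attracting-spectrum part, and (vi), the alternative characterization in the excerpt gives that $\widehat\k$ is a TFPV for dimension $s$.

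For the final sentence — that the minimal coordinate subspace $C$ containing $\widehat\k$ lies in $W_s$ — I would simply invoke Proposition~\ref{prop:generic_dimension}(b): we have just shown $\widehat\k$ is a TFPV for dimension $s$ with $s$ equal to the codimension of $G(\widehat\k)$, which is exactly the hypothesis of that proposition, so $C \subseteq W_s$ follows directly.

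The main obstacle, I expect, is the bookkeeping in part (i): one must be careful that the $s$-dimensional equilibrium manifold of the \emph{subnetwork} $G(\widehat\k)$ really is the local critical variety of $h(\cdot,\widehat\k)$ near $z$ as a variety over $\R$, i.e. that there is no lower-dimensional extra branch through $z$ and that the dimension count from Proposition~\ref{prop:cb} (which is about positive equilibria) transfers to the irreducible component $Z$ in the sense required by Definition~\ref{def:TFPV}(i). This is where the rank equality $\operatorname{rank} D_1 h(z,\widehat\k) = n-s$ does the work, via the implicit function theorem / the fact that a variety defined by a map of constant rank $n-s$ near a point is locally a manifold of dimension $s$; I would spell this out but expect it to be routine once the spectral data from complex-balancing is in hand. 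Everything else is a direct citation of Proposition~\ref{prop:cb}, Lemma~\ref{lemma:invariant}, and Proposition~\ref{prop:generic_dimension}(b).
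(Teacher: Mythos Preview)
Your proposal is correct and follows essentially the same route as the paper: verify the alternative TFPV characterization (iv)--(vi) at a positive complex-balanced equilibrium of $G(\widehat\k)$ using Proposition~\ref{prop:cb} for the dimension of the equilibrium set and the spectral data of the Jacobian, then invoke Proposition~\ref{prop:generic_dimension}(b) for the inclusion $C\subseteq W_s$. The paper's own proof is considerably terser but relies on exactly the same two propositions; your additional care with condition (i) via the rank of $D_1h$ and the implicit function theorem is a reasonable elaboration of what the paper leaves implicit. One small remark: your appeal to Lemma~\ref{lemma:invariant} for (vi) is harmless but unnecessary---the $s$ independent stoichiometric first integrals exist directly from $\operatorname{codim} G(\widehat\k)=s$, and that already suffices for (vi); you do not need to know that \emph{all} linear first integrals are stoichiometric.
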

\begin{proof}
We verify properties (vi)-(iv) of TFPVs. By Proposition~\ref{prop:cb}, the dimension of the set of positive equilibria of $G(\widehat\k)$ is $s$. 
The remaining properties of a TFPV follow from the properties of complex-balanced equilibria in Proposition \ref{prop:cb}. The last statement follows from Proposition~\ref{prop:generic_dimension}(b).
\end{proof}

An immediate consequence of Theorem~\ref{thm_redCB} arises when $G$ is weakly reversible and has deficiency zero.
A key point is that the deficiency of any subnetwork obtained from $G$ by removing reactions
can only decrease \cite[Prop. 8.2]{joshi-shiu-III}. In particular, if $G$ has deficiency zero, then so does any subnetwork.

\begin{theorem}\label{cbthm}
Let a weakly reversible reaction network $G$ of deficiency zero be given, with dynamics governed by system \eqref{eq:ODE2}. 
Let $\widehat\kappa\in\mathbb R^m_{\geq 0}$ be such that the induced subnetwork $G(\widehat\k)$ is weakly reversible and has more connected components than $G$.   

Then $\widehat\kappa$ is a TFPV of system \eqref{eq:ODE2} for dimension $n-d+r$, with $d$ and $r$ the number of complexes, respectively, connected components of $G(\widehat\k)$. This dimension equals the codimension of $G(\widehat\k)$.
\end{theorem}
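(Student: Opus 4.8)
The plan is to derive Theorem~\ref{cbthm} as a direct corollary of Theorem~\ref{thm_redCB}, since the hypotheses here are a special case. First I would observe that because $G$ has deficiency zero and removing reactions can only decrease the deficiency \cite[Prop.~8.2]{joshi-shiu-III}, the subnetwork $G(\widehat\k)$ also has deficiency zero. Since $G(\widehat\k)$ is assumed weakly reversible, the second hypothesis of Theorem~\ref{thm_redCB} is automatically satisfied: by Proposition~\ref{prop:cb}, when $\delta = 0$ and the network is weakly reversible, \emph{all} positive equilibria are complex-balanced, with no constraints needed on the (nonzero) rate constants. So the relations for existence of positive complex-balanced equilibria are vacuous and the nonzero coordinates of $\widehat\k$ trivially satisfy them.

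Next I would pin down the dimension. Let $d$ and $r$ be the numbers of complexes and connected components of $G(\widehat\k)$ (which equal those of $G$ for the complexes, since isolated nodes are retained, but $r$ strictly increases). By the deficiency formula in Proposition~\ref{prop:cb}, $\delta = d - (n - s) - r = 0$ where $s$ is the codimension of $G(\widehat\k)$; hence $s = n - d + r$. The hypothesis that $G(\widehat\k)$ has more connected components than $G$ guarantees $s > s^*$, where $s^* = n - d + r_0$ is the codimension of $G$ with $r_0 < r$ its number of connected components (here I use that $G$, being weakly reversible of deficiency zero, also satisfies the deficiency formula). This gives $s - s^* = r - r_0 > 0$, so $s > s^*$ as required for the TFPV definition.

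With both bullet-point hypotheses of Theorem~\ref{thm_redCB} verified and $s > s^*$ established, Theorem~\ref{thm_redCB} applies directly and yields that $\widehat\k$ is a TFPV for dimension $s = n - d + r$, and moreover that the minimal coordinate subspace containing $\widehat\k$ lies in $W_s$. The final sentence of the statement, that this dimension equals the codimension of $G(\widehat\k)$, is exactly the identity $s = n - d + r$ extracted above from the deficiency-zero formula.

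I do not expect a genuine obstacle here; the content is essentially bookkeeping with the deficiency formula plus invoking Theorem~\ref{thm_redCB}. The one point requiring a little care is the claim that $G(\widehat\k)$ being weakly reversible with more connected components than $G$ forces a \emph{strict} increase in codimension — this needs the deficiency-zero formula applied to both $G$ and $G(\widehat\k)$ (so that codimension is determined purely by $n$, $d$, $r$), together with the fact that $d$ is unchanged while $r$ strictly increases. A secondary subtlety worth a sentence is that weak reversibility of $G(\widehat\k)$ is part of the hypothesis (not automatic from weak reversibility of $G$), which is why it must be assumed explicitly.
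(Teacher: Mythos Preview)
Your proposal is correct and follows essentially the same approach as the paper's own proof: both use that removing reactions cannot increase deficiency (so $G(\widehat\k)$ has deficiency zero), compute the codimensions via the deficiency formula to get $s=n-d+r>s^*=n-d+r^*$, note that deficiency zero plus weak reversibility makes the complex-balancing conditions vacuous, and then invoke Theorem~\ref{thm_redCB}. Your write-up is a bit more detailed, but the logical skeleton is identical.
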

\begin{proof} 
Let $r^*$ be the number of connected components of $G$. 
As the deficiencies of $G$ and $G(\widehat\k)$ are zero, the codimensions of $G$ and $G(\widehat\k)$ are $s^*=n-d+r^*$ and $s=n-d+r$, respectively. As $r>r^*$, we have $s>s^*$. 
Furthermore, all parameter values $\widehat\k$ yield complex-balanced equilibria for $G(\widehat\k)$ as the deficiency is zero. The statement now follows from Theorem~\ref{thm_redCB}.  
\end{proof}

In particular, when the hypotheses of Theorem \ref{cbthm} hold, then $\widehat \kappa$ lies in a coordinate subspace of the parameter space. Moreover, the connected components of $G(\widehat\kappa)$ identify a 
coordinate subspace in $W_s$ for the appropriate $s>s^*$. 
For some classes of reaction networks, including weakly reversible reaction networks of deficiency zero, an explicit formula for the singular perturbation reduction was derived in \cite{fkw}.

\begin{example}
The reversible Michaelis-Menten system from Example~\ref{mmdisguiseex} has deficiency zero and codimension $s^*=2$.  
By Theorem~\ref{cbthm}, setting either $\k_1=\k_{-1}=0$ or $\k_2=\k_{-2}=0$, the number of connected components increases, and the resulting rate parameters are TFPVs for dimension $3$. 
\end{example}

\begin{example}\label{compinex1}

 The competitive inhibition reaction network with reversible product formation \cite{KeSn}, 
 \[X_1+X_2\ce{<=>[\k_1][\k_{-1}]}X_3\ce{<=>[\k_2][\k_{-2}]}X_4+X_2,\quad
X_5+X_2\ce{<=>[\k_3][\k_{-3}]}X_6,\]
has two additional reactions compared to the reversible Michaelis-Menten reaction network, see Example \ref{mmdisguiseex}, namely, inhibition of the enzyme ($X_2$) by an inhibitor ($X_5$) via formation of an intermediate complex ($X_6$).
The reaction network is weakly reversible with deficiency zero (five complexes, two linkage classes and stoichiometric subspace of dimension three). 
By Theorem~\ref{cbthm}, setting either $\k_1=\k_{-1}=0$, or $\k_2=\k_{-2}=0$, or $\k_3=\k_{-3}=0$, the number of connected components increases by one, and the resulting rate parameters are TFPVs for dimension $4$. In addition, choosing two of the three pairs to be zero, one obtains TFPVs for dimension $5$.

\end{example}
 
\begin{example}\label{futileex}
Consider the following reaction network, which is the futile cycle with one phosphorylation site \cite{Wang:2008dc}:
\[
X_1 + X_3 \cee{<=>[\k_1][\k_2]} X_5, \cee{->[\k_3]} X_1+X_4\qquad 
X_2 + X_4 \cee{<=>[\k_4][\k_5]} X_6 \cee{->[\k_6]} X_2+X_3.
\]
This reaction network is not weakly reversible and   has codimension $3$. An easy computation shows that the stationary set admits a parametrization with three free variables $x_1,x_2,x_3$, and hence has dimension $3$.  Proposition~\ref{prop:noTFPV}(a) applies.

Alternatively, Proposition~\ref{prop:noTFPV}(c) is applicable: For a specific choice of $N'$, the matrix $N' \diag(E\lambda) B^\top$ in \eqref{eq:convex} equals
\[\begin{pmatrix}
0 & -\lambda_{2} & 0 & -\lambda_{2} & \lambda_{2} & \lambda_{1}+\lambda_{2} 
\\
 \lambda_{1} & 0 & \lambda_{1} & 0 & -\lambda_{2}-\lambda_{3} & 0 
\\
 0 & \lambda_{2} & 0 & \lambda_{2} & 0 & -\lambda_{1}-2 \lambda_{2}-\lambda_{3} 
\end{pmatrix}.\]
We have $\Lambda= \R^3_{>0}$. The minor given by columns $1,2,5$ is $\lambda_1\lambda_2^2$, which is non-zero.

In conclusion, no TFPVs with positive entries and critical manifold intersecting the positive orthant exist.
Upon setting {$\k_3=\k_6=0$}, the resulting reaction network is weakly reversible and has deficiency $0$ with codimension $s=4$. Hence, by Theorem~\ref{thm_redCB}, any rate parameter of the form $(\k_1,\k_2,0,\k_4,\k_5,0)$ with non-zero entries being positive, is a TFPV
for dimension $4$. 
\end{example} 
 
 \begin{example}
 Consider  the following reaction network modelling an allosteric kinase \cite{feng:allosteric}:
 \begin{align*}
 X_1 + X_5  & \cee{<=>[\k_1][\k_2]} X_3 \cee{->[\k_9]} X_1+X_6, & X_3 &  \cee{<=>[\k_3][\k_4]} X_4, & X_6 & \cee{->[\k_{11}]} X_5,   \\
  X_2 + X_5  & \cee{<=>[\k_6][\k_5]} X_4 \cee{->[\k_{10}]} X_2+X_6, & X_1 &  \cee{<=>[\k_7][\k_8]} X_2. 
 \end{align*}
The positive part of the stationary set admits a parametrization with $s^*=2$ free variables.  Upon setting $\k_9=\k_{10}=\k_{11}=0$, the reaction network becomes weakly reversible of deficiency $1$ and codimension $3$. 
The condition that characterizes when  complex-balanced equilibria exist, referred to in Theorem~\ref{thm_redCB}, is  $\k_2\k_4\k_6\k_7=\k_1\k_3\k_5\k_8$ \cite{Craciun-Sturmfels}. It follows that  any $\widehat\k=(\k_1,\k_2,\k_3,\k_4,\k_5,\k_6,\k_7,\k_8,0,0,0)$, fulfilling this condition,  is a TFPV for dimension $3$ and the corresponding coordinate subspace is included in $W_3$. 
 
 In this case, the positive part of the stationary variety of $G(\widehat\k)$ always admits a parametrization. Using Hurwitz determinants, one confirms that the variety is linearly attracting. Therefore, the whole positive part of this particular coordinate subspace is formed by TFPVs.
\end{example}

We conclude with an example of a weakly reversible reaction network admitting TFPVs that are not included in a proper coordinate subspace. 

\begin{example}
This example is introduced in \cite[Example 4.1]{boros:WR}, where  the purpose is to show the existence of weakly reversible reaction networks with infinitely many equilibria in some SCC. 
The reaction network consists of four connected components, written in rows for convenience:
\begin{align*}
0  \ce{->[\k_1]} X_1 \ce{->[\k_2]} X_1+X_2 & \ce{->[\k_3]} X_2 \ce{->[\k_4]} 0, \\ 
2X_1  \ce{->[\k_5]} 3X_1 \ce{->[\k_6]} 3X_1+X_2 & \ce{->[\k_7]} 2X_1+X_2 \ce{->[\k_8]} 2X_1, \\
2X_2  \ce{->[\k_9]} X_1 + 2X_2 \ce{->[\k_{10}]} X_1+3X_2& \ce{->[\k_{11}]} 3X_2 \ce{->[\k_{12}]} 2X_2, \\ 
2X_1 + 2X_2 \ce{->[\k_{13}]} 3X_1 + 2X_2 \ce{->[\k_{14}]} 3X_1+3X_2 &\ce{->[\k_{15}]} 2X_1+3X_2 \ce{->[\k_{16}]} 2X_1 +2X_2.
\end{align*}
This reaction network is weakly reversible of  codimension  $s^*=0$. 
When all parameters are set to $1$ except for $\k_3=\k_8=\k_{10}=\k_{13}=a$, with $a>5$, then the stationary variety has dimension $1$: it consists of one unstable point $(1,1)$ and one attracting closed curve around $(1,1)$. Hence, any such rate parameter is a TFPV for dimension $1$, which does not belong to a proper coordinate subspace. One might note that the reaction network is of the form discussed in Example \ref{ex:minus} with all negative terms in the ODE system being multiples of $x_2$.
\end{example}


\subsection{TFPVs for first order reaction networks}

In this section, we consider the special case of a {mass-action} reaction network $G=(\mathcal{Y},\mathcal{R},\k)$  
containing only first order reactions; thus $d=n$ or $d=n+1$ and {non-zero} complexes may be identified with species. In the formulation
\begin{equation}\label{lineq}
\dot x=Y A(\k)\, x^Y,\qquad x\in \R^n_{\geq 0},
\end{equation}
the matrix $Y$ is simply the identity matrix if $0\notin\mathcal{Y}$ and the identity matrix with an extra zero column otherwise. Hence either
$x^Y=x$  or $x^Y=(x,1)^\top$. 

\begin{lemma}
A first order mass-action reaction network has deficiency zero.
\end{lemma}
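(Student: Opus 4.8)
The plan is to recall the deficiency formula from Proposition~\ref{prop:cb}, namely $\delta = d - (n - s^*) - r$, where $d$ is the number of complexes, $r$ the number of connected components, $n-s^*$ the dimension of the stoichiometric subspace, and $s^*$ the codimension. So it suffices to show that for a first order reaction network one always has $d - r = \dim(\operatorname{im} N) = n - s^*$. Equivalently, I would show that the rank of the stoichiometric matrix $N$ equals $d - r$, the number of complexes minus the number of connected components (i.e.\ linkage classes).

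First I would set up the relevant linear-algebraic picture. For a first order network, every non-zero complex is a single species $X_i$, so the complexes are (a subset of) $\{X_1,\dots,X_n\}$ together possibly with the zero complex $0$; thus $Y$ is the identity matrix, or the identity with an appended zero column, as already noted in the excerpt right before the lemma. The stoichiometric matrix factors as $N = Y \cdot (\text{incidence-type matrix of the reaction graph})$: writing $I_a \in \R^{d\times m}$ for the matrix whose $i$-th column is $e_\ell - e_j$ when the $i$-th reaction is $Y_j \to Y_\ell$, we have $N = Y I_a$. Since $Y$ restricted to the non-zero complexes is injective (it is essentially an identity matrix, possibly with one zero column corresponding to the zero complex, and the zero complex contributes $e_{\text{zero}}$ to at most the kernel), the rank of $N$ equals the rank of $I_a$ up to the one-dimensional ambiguity coming from the zero complex — and in fact one checks that appending a zero column to $Y$ does not drop the rank of $Y I_a$ because the row/column of $I_a$ corresponding to $0$ can be eliminated by the column-sum-zero structure. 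The key classical fact is then that the incidence matrix $I_a$ of a directed graph with $d$ nodes and $r$ connected components has rank exactly $d - r$; its kernel (on the node side) is spanned by the indicator vectors of the connected components, one per component.

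So the key steps, in order, are: (1) invoke the deficiency formula $\delta = d - (n-s^*) - r$ from Proposition~\ref{prop:cb}, reducing the claim to $\operatorname{rank} N = d - r$; (2) write $N = Y I_a$ with $I_a$ the signed incidence matrix of the reaction digraph; (3) recall/prove that $\operatorname{rank} I_a = d - r$ (standard graph theory: the left null space of $I_a$ is spanned by the per-component all-ones vectors, exactly as recorded in Lemma~\ref{lem:Laplacian}(c) for the Laplacian, which has the same left kernel structure); (4) show that left-multiplication by $Y$ does not change this rank, treating the two cases $0 \notin \mathcal{Y}$ (where $Y = \mathrm{Id}$, so nothing to do) and $0 \in \mathcal{Y}$ (where $Y$ is $\mathrm{Id}$ with a zero column appended; here one argues that the column of $I_a$-rows indexed by the zero complex lies in the span contributed by the other rows after accounting for the incidence structure, so the zeroed-out row does not reduce the rank — concretely, the image of $N$ is spanned by $\{e_i - e_j : X_j \to X_i\} \cup \{e_i : 0 \to X_i\} \cup \{-e_j : X_j \to 0\}$, and a dimension count against the connected-components decomposition of the digraph on node set $\mathcal{Y}$ gives $d - r$); (5) conclude $\delta = 0$.

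The main obstacle is step (4): handling the zero complex cleanly. When $0 \in \mathcal{Y}$, the map $Y$ has a one-dimensional kernel (spanned by the coordinate vector of the zero complex), so a priori $\operatorname{rank}(Y I_a)$ could be one less than $\operatorname{rank}(I_a) = d - r$. The point to nail down is that the kernel of $Y$ meets the image of $I_a$ trivially — i.e.\ the column $e_0$ of the zero complex is never in $\operatorname{im}(I_a)$. This follows because $\operatorname{im}(I_a)$ is contained in the hyperplane of vectors whose coordinates sum to zero within each connected component (each generator $e_\ell - e_j$ has coordinate sum zero), while $e_0$ has coordinate sum $1$ in its component; hence $\ker Y \cap \operatorname{im} I_a = 0$ and $\operatorname{rank} N = \operatorname{rank} I_a = d - r$. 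I would present this as the crux and keep the rest terse, since steps (1)–(3) are immediate from results already in the paper.
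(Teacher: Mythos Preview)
Your proof is correct, and its core observation coincides with the paper's. The paper, however, applies that observation more directly: rather than invoking the formula $\delta = d - (n-s^*) - r$ and computing $\operatorname{rank} N$, it uses the other characterization recorded in Proposition~\ref{prop:cb}, namely $\delta = \dim(\ker Y \cap \operatorname{span}\Delta)$, and simply notes that this intersection is $\{0\}$ ``due to the form of $Y$''. Since $\operatorname{span}\Delta = \operatorname{im} I_a$, your crucial step~(4)---that $\ker Y \cap \operatorname{im} I_a = \{0\}$ because every vector in $\operatorname{im} I_a$ has zero coordinate sum while $e_0$ does not---is exactly the content the paper relies on, but the paper gets to use it as the whole argument rather than as one step out of five. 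Your route through the incidence-matrix rank $d - r$ is perfectly valid and perhaps more self-contained (it spells out the graph-theoretic fact rather than hiding it in the deficiency formula), but the paper's one-line version is available precisely because Proposition~\ref{prop:cb} already records both descriptions of $\delta$.
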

\begin{proof}
With the notation introduced in Proposition \ref{prop:cb}, one has ${\rm Ker}\,Y\cap {\rm span}\,\Delta=\{0\}$, due to the form of $Y$. The assertion follows.
\end{proof}

\begin{remark}\label{rk:dimzeroset}
By Lemma~\ref{lem:Laplacian}, the rank of $A(\k)$ does not depend on $\k\in \R^m_{>0}$. Let $T$ be the number of terminal strongly connected components of $G$ and  $s^*$ be the codimension of $G$. 
We make the following observations:
\begin{itemize}
\item If $0\notin\mathcal{Y}$, then $Y$ is the identity matrix, and the solution set to 
\eqref{lineq} in $\R^n_{\geq 0}$ is $\ker A(\k)$ and $s^*=T$.
 \item If $0\in\mathcal{Y}$, then $s^*=T-1$. 
 \begin{itemize}
 \item If $0$ belongs to a terminal strongly connected component of $G$, then the solution set to 
\eqref{lineq}  in $\R^n_{\geq 0}$ is the linear  affine subspace of $\ker A(\k)\cap \R^n_{\geq 0}$ with last coordinate equal to $1$. 
 By the description of   $\ker A(\k)$ in Lemma~\ref{lem:Laplacian}, this subspace has dimension $T-1$.
   \item If  $0$ does not belong to a terminal strongly connected component of $G$, then 
\eqref{lineq} has no solution. Indeed, the last entry of $x^Y$ is equal to $1$, and hence positive, but  any vector in $\ker A(\k)$  has last entry zero.  
   \end{itemize}
\end{itemize}
\end{remark}

With this in mind, we obtain the following proposition.

\begin{proposition}\label{crnirred}
Let $A(\k)$ be the Laplacian matrix of a mass-action reaction network $G=(\mathcal{Y},\mathcal{R},\k)$  consisting only of first order reactions 
with dynamics governed by system 
\eqref{lineq} in $\R^n_{\geq 0}$. Let $T$ be the number of terminal strongly connected components of $G$, and $s^*$ the dimension of the solution set to  \eqref{lineq}.

\begin{enumerate}[(a)]
\item If $\widehat \k \in\mathbb R_{\geq 0}^m$ is a TFPV of \eqref{lineq} for dimension $s>s^*$, then $\widehat\k$ lies in a proper coordinate subspace of $\mathbb R^m$.
\item 
Let $\widehat \k \in\mathbb R_{\geq 0}^m$  be in a proper coordinate subspace of $\mathbb R^m$, and consider the subnetwork $G(\widehat\k)$.
Then $\widehat \k$ is a TFPV if and only if $G(\widehat\k)$ has more than $T$  terminal strongly connected components, and additionally the complex $0$ belongs to one such component, provided $0$ is a complex of  $G$. 
\item  Each irreducible component of $W_s$ for $s>s^*$  is a coordinate subspace of $\mathbb R^m$.
\end{enumerate}
\end{proposition}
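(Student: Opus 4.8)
The plan rests on the observation that for a first order network the right-hand side of \eqref{lineq} is affine in $x$, so $D_1h(x,\k)$ does not depend on $x$; writing it as $M(\k)$, one has $M(\k)=A(\k)$ when $0\notin\mathcal Y$, while when $0\in\mathcal Y$ the matrix $M(\k)$ is obtained from the Laplacian $A(\k)$ by deleting the row and column indexed by the complex $0$. In both cases $M(\k)$ is again a compartmental matrix: its off-diagonal entries are nonnegative and each of its column sums is $\le 0$, since in the second case deleting the $0$-column only subtracts a nonnegative number from each column sum of $A(\k)$. Hence Lemma~\ref{compmatlem} applies to $M(\k)$ throughout. For part (a): by Lemma~\ref{lem:Laplacian} the rank of $A(\k)$ is the same for all $\k\in\R^m_{>0}$, and by Remark~\ref{rk:dimzeroset} the solution set of \eqref{lineq} then has dimension $s^*$ for all such $\k$, so $\operatorname{rank}M(\k)=n-s^*$ for all $\k\in\R^m_{>0}$. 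If $\widehat\k\in\R^m_{>0}$ were a TFPV for a dimension $s>s^*$, then Definition~\ref{def:TFPV}(ii) would force $\operatorname{rank}D_1h(x_0,\widehat\k)=n-s<n-s^*$, contradicting $D_1h(x_0,\widehat\k)=M(\widehat\k)$. Thus every TFPV for dimension $s>s^*$ has a vanishing coordinate, i.e.\ lies in a proper coordinate subspace.

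For part (b) we repeat this analysis for the subnetwork $G(\widehat\k)$, which is again a first order network with constant, compartmental Jacobian $M(\widehat\k)$. The critical variety $\mathcal V(h(\cdot,\widehat\k))\cap\R^n_{\ge 0}$ is the nonnegative part of the affine subspace $\{x:h(x,\widehat\k)=0\}$, hence irreducible; by Remark~\ref{rk:dimzeroset} applied to $G(\widehat\k)$ it is nonempty precisely when $0\notin\mathcal Y$, or $0$ lies in a terminal strongly connected component of $G(\widehat\k)$, and in that case it has dimension $s$ equal to the number $T'$ of terminal strongly connected components of $G(\widehat\k)$ if $0\notin\mathcal Y$, and $T'-1$ if $0\in\mathcal Y$; moreover $\operatorname{rank}M(\widehat\k)=n-s$. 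Since $M(\widehat\k)$ is compartmental, Lemma~\ref{compmatlem} yields the decomposition $\R^n=\operatorname{Ker}M(\widehat\k)\oplus\operatorname{Im}M(\widehat\k)$ and the fact that all its nonzero eigenvalues have negative real part; as $M(\widehat\k)$ is independent of $x$, conditions (ii) and (iii) of Definition~\ref{def:TFPV} hold automatically (with $\widetilde Z=Z$). Hence $\widehat\k$ is a TFPV exactly when the critical variety is nonempty and $s>s^*$; writing $s^*=T$ if $0\notin\mathcal Y$ and $s^*=T-1$ if $0\in\mathcal Y$ (Remark~\ref{rk:dimzeroset}), this amounts to $T'>T$ together with the requirement that $0$ belong to a terminal strongly connected component of $G(\widehat\k)$ whenever $0\in\mathcal Y$, which is the assertion.

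For part (c), observe that by the remark following Definition~\ref{rem:switching} the network $G(\widehat\k)$ depends only on $\operatorname{supp}(\widehat\k)$, and therefore, by part (b), so do the property of $\widehat\k$ being a TFPV and the unique dimension $s$ for which it can be one. Consequently $\Pi_s$ is the union, over the finitely many index sets $S\subseteq\{1,\dots,m\}$ for which the subnetwork $G_S$ with reaction index set $S$ meets the criterion of part (b) with reduction dimension $s$, of the sets $\{\k\in\R^m_{\ge 0}:\operatorname{supp}(\k)=S\}$; each of these is a nonempty open subset of the coordinate subspace $C_S:=\{\k\in\R^m:\k_i=0\text{ for all }i\notin S\}$, hence Zariski dense in $C_S$. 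Since Zariski closure commutes with finite unions, $W_s=\overline{\Pi_s}^{Zar}=\bigcup_S C_S$ is a finite union of coordinate subspaces. As each $C_S$ is irreducible, the irreducible components of $W_s$ are exactly the maximal $C_S$ that occur, and in particular every irreducible component of $W_s$ is a coordinate subspace.

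I expect the bulk of the work to be in part (b): checking that deleting the $0$-row and $0$-column of $A(\k)$ preserves compartmentality, pinning down exactly when the critical variety is nonempty and reading off its dimension from the terminal-strongly-connected-component structure of $G(\widehat\k)$ through Remark~\ref{rk:dimzeroset}, and dealing with the degenerate case where $G$ itself has an empty solution set (so that $s^*$ must be read off as $n$ minus the generic Jacobian rank, as in the preliminaries) while some subnetwork $G(\widehat\k)$ does not.
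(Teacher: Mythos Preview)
Your proof is correct and follows essentially the same approach as the paper: for (a) you use constancy of the (affine) Jacobian rank for positive $\k$; for (b) you identify the Jacobian as the compartmental matrix $A(\widehat\k)$ or its $0$-row/column truncation and invoke Lemma~\ref{compmatlem} together with Remark~\ref{rk:dimzeroset}; for (c) you deduce the coordinate-subspace structure of $W_s$ from the fact that the TFPV property depends only on $\operatorname{supp}(\widehat\k)$. Your write-up of (c) is in fact more precise than the paper's one-line argument: the paper asserts that $\Pi_s$ is a union of coordinate subspaces of $\R^m_{\ge 0}$, whereas you correctly note it is a union of the relatively open strata $\{\k:\operatorname{supp}(\k)=S\}$ and then take Zariski closures, which is what is really needed to conclude.
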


\begin{proof} (a) The proof is straightforward as the dimension of the solution set to   
\eqref{lineq} 
does not depend on $\k$, provided all entries are positive. 
(b) We first make a digression. Consider $G(\widehat\k)$ and assume $0\in \mathcal{Y}$. Then the last column of the matrix $Y$ is zero and the last entry of $v(x)=x^Y$ is $1$. Let $\widetilde{A}(\widehat\k)$ be the submatrix of $A(\widehat\k)$ obtained by removing the last row and column. Let $\beta\in \R^{d-1}$ be the vector formed by the first $d-1$ entries of the last column of $A(\k)$.  Then $Y A(\widehat\k) v(x) =  \widetilde{A}(\widehat\k) x +\beta$. To prove (b), we apply Lemma~\ref{compmatlem} to the compartmental matrices $A(\widehat\k)$ or $\widetilde{A}(\widehat\k)$, depending on whether  $0$ is a complex of $G$.  (c) is a direct consequence of  (a) and (b), as $\Pi_s$ is a union of  coordinate subspaces of $\R^m_{\ge 0}$. 
\end{proof}

Rephrasing the statement of Proposition~\ref{crnirred}, all TFPVs 
are found by setting rate parameters to zero such that the number of terminal strongly connected components increases, and taking into consideration the role of the zero complex.
We note that the irreducible components of any $W_s$ can be identified by inspecting the graph $G$. 

If the considered first order reaction network $G$ in addition is weakly reversible, then  for this network Theorem \ref{thm_redCB} and Theorem \ref{cbthm} are both consequences of Proposition \ref{crnirred}. For  Theorem \ref{thm_redCB}, note that if the subnetwork $G(\widehat\k)$ of $G$ is weakly reversible with codimension $s>s^*$, then it must be that $\widehat\k\in\R^m_{\ge 0}$ belongs to a proper coordinate subspace of $\R^m$ and the number of terminal strongly connected components 
of $G(\widehat\k)$ exceeds the number of terminal strongly connected components of $G$. Hence, the conclusions of Theorem \ref{thm_redCB} follow from Proposition \ref{crnirred}(b),(c). Note that the second condition of Theorem \ref{thm_redCB} is trivially fulfilled because $G$ has deficiency zero, hence any subnetwork, in particular $G(\widehat\k)$,  has also deficiency zero \cite[Prop. 8.2]{joshi-shiu-III}.
For Theorem \ref{cbthm}, we remark that it is a consequence of Theorem~\ref{thm_redCB}, hence also of Proposition \ref{crnirred}. Alternatively, it follows directly from Proposition \ref{crnirred} by similar arguments to above.

\begin{example}
Consider a  first order reaction network with three complexes and four reactions,
\[X_1 \cee{<=>[\k_1][\k_{-1}]} X_2  \cee{<=>[\k_2][\k_{-2}]}   X_3.\]
This reaction network has one terminal strongly connected component. 
By Remark~\ref{rk:dimzeroset}, $s^*=1$. 
There are three coordinate subspaces yielding  TFPVs for dimension $2$. These arise from the three ways to  increase the number of terminal strongly connected components: $\k_1=\k_{-1}=0$, or $\k_2=\k_{-2}=0$, or 
$\k_1=\k_{-2}=0$. 
\end{example}

\begin{example}
For the first order reaction network 
\[X_1 \cee{<=>[\k_1][\k_{-1}]} X_2,  \qquad 0 \cee{->[\k_2][]}   X_3,\]
we have two connected components and $s^*=1$  (Remark~\ref{rk:dimzeroset}), but this reaction network has no stationary points. Upon setting $\k_2=0$, we have three connected components and $0$ belongs to a terminal strongly connected component. Hence, by Proposition~\ref{crnirred}, 
$(\k_1,\k_{-1},0)$ is a TFPV for dimension $2$. 
\end{example}


 \section{Scalings, stoichiometry and TFPVs}\label{sec:Scale}

In this section, we start from LTC variable sets and the scaling approach to singular perturbation reductions of system  \eqref{eq:ODE2}, as initiated by Heineken et al.\ \cite{hta} (recall Subsection~\ref{slofasubsec} on slow-fast systems). A priori, there are no TFPVs that correspond to scalings, but these may appear when the system is restricted to {SCCs, as new parameters are introduced}.
For motivation, we look again at the reversible Michaelis-Menten system. 

\begin{example}\label{mmdisguiseexplus}
We continue Example \ref{mmdisguiseex}. The LTC variable set $\{x_2,\,x_3\}$ corresponds to the stoichiometric first integral $\phi_1=x_2+x_3$, and the LTC variable set $\{x_1,\,x_3,\,x_4\}$ corresponds to the stoichiometric first integral $\phi_2=x_1+x_3+x_4$. Moreover, on the SCC given by $x_2+x_3=e_0$ and $x_1+x_3+x_4=s_0$, one obtains the $2$-dimensional system
\begin{align*}
\dot x_1 &= -\k_1x_1(e_0-x_3)+ \k_{-1}x_3 & & \\
\dot x_3 &= \k_1x_1(e_0-x_3) - (\k_{-1}+\k_2)x_3+ \k_{-2}(e_0-x_3)(s_0-x_1-x_3).
\end{align*}
This system admits a TFPV with $e_0=0$, and all other parameters $>0$, with a degenerate (one dimensional) SCC forming the critical manifold, and a subsequent singular perturbation reduction. (For a TFPV with $s_0=0$, the SCC degenerates into a single point.)
\end{example}

Quite generally, LTC variable sets point to bifurcation scenarios, and possibly interesting dynamics may appear for small perturbations. In general there is no perfect correspondence to stoichiometric first integrals, as shown by examples in \cite{lawa}. But stoichiometric first integrals which correspond to LTC variable sets may, in turn, yield TFPVs of  the system on SCCs. 

 We start by characterizing LTC species sets. 

\subsection{A characterization of LTC species sets}

A useful modification of system \eqref{eq:ODE2} is the following, when some complexes are {\em non-reactant complexes},  that is, they only appear as product complexes. Complex $Y_j$ is non-reactant if and only if column $j$ of $A(\k)$ is zero. Thus, one may form $Y^*$ from $Y$, respectively,  $A^*(\k)$ from $A(\k)$, by removing all columns that correspond to indices of non-reactant complexes, to rewrite \eqref{eq:ODE2} as
\begin{equation}\label{eq:ODE2ac}
\dot x =Y A(\k) x^Y=Y A^*(\k)\,x^{Y^*}.
\end{equation}
Let $d^*$ be the number of reactant complexes, hence $Y^*\in \N_{0}^{n\times d^*}$. 
Note that $A^*(\k)$ is not a square matrix unless all complexes are reactant complexes and {thus} $A^*(\k)=A(\k)$.

We will first and foremost discuss  sets that are LTC species sets for {\em all} parameter values $\k\in\mathbb R_{>0}^m$.
The equations $x^Y=0$, respectively, $x^{Y^*}=0$ define varieties with coordinate subspaces as irreducible components, and the corresponding variables are obviously LTC variables. 
We will first show that all LTC variable sets of {system \eqref{eq:ODE2ac} }(which is the same as system \eqref{eq:ODE1}) are of this type.

The following proposition characterizes the LTC species sets. Recall  that reaction networks with inflow reaction do not admit any LTC species sets (remark below Definition \ref{def:indexset}).

\begin{proposition}\label{scaprop} 
Let system \eqref{eq:ODE1} be given.
Then $\{i_1,\ldots,i_u\}$ with $u<n$ and $1\leq i_1<i_2\cdots<i_u\leq n$ is an LTC index set for all $\kappa\in\mathbb R^m_{>0}$ if and only if 
\[
x^{Y^*}=0, \quad\text{whenever}\quad x_{i_1}=\cdots = x_{i_u}=0.
\]
\end{proposition}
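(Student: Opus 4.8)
The plan is to prove both implications directly from the structure of the right-hand side of \eqref{eq:ODE2ac}, namely $h(x,\k) = Y A^*(\k)\, x^{Y^*}$. The ``if'' direction is essentially immediate: if $x^{Y^*}=0$ whenever $x_{i_1}=\cdots=x_{i_u}=0$, then on the coordinate subspace $\{x_{i_1}=\cdots=x_{i_u}=0\}$ we get $h(x,\k)=Y A^*(\k)\cdot 0 = 0$ for every $\k$, so $\{i_1,\ldots,i_u\}$ is an LTC index set (here $h^{(0)}=h(\cdot,\pi^*)$ in the notation of Definition~\ref{def:indexset}, and since the property is required for all $\k$, the same coordinate subspace works for any base point).

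For the ``only if'' direction, suppose $\{i_1,\ldots,i_u\}$ is an LTC index set for all $\k\in\R^m_{>0}$, i.e.\ $Y A^*(\k)\, x^{Y^*}=0$ on $V:=\{x: x_{i_1}=\cdots=x_{i_u}=0\}$ for every $\k$. The goal is to deduce $x^{Y^*}=0$ on $V$, i.e.\ that every reactant complex $Y^*_j$ contains at least one of the species $X_{i_1},\ldots,X_{i_u}$ with positive stoichiometric coefficient. First I would argue we may assume all complexes are reactant complexes, so $A^*(\k)=A(\k)$ and $Y^*=Y$: indeed, a non-reactant complex corresponds to a zero column of $A(\k)$, hence a zero column of $A^*(\k)$ once removed, and it contributes nothing to $x^{Y^*}$ by construction. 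Then I would use the freedom in $\k$: since $Y A(\k)\, x^{Y}=0$ on $V$ for \emph{all} positive $\k$, and the reaction-rate vector enters linearly, one can pick $\k$ supported on a single reaction $Y_j \to Y_\ell$. For that choice, $Y A(\k)\, x^Y = \k_{\ell j}(y_\ell - y_j)\, x^{y_j}$, so the condition forces $(y_\ell-y_j)\, x^{y_j}=0$ on $V$. Since $y_\ell \ne y_j$ (self-edges are excluded), $y_\ell - y_j$ is a nonzero vector, hence $x^{y_j}=0$ identically on $V$. Ranging over all reactions, $x^{y_j}=0$ on $V$ for every reactant complex $Y_j$, which is exactly $x^{Y^*}=0$ on $V$. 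Finally I would translate $x^{y_j}\equiv 0$ on $V$ into the combinatorial statement: $x^{y_j}=\prod_i x_i^{y_{ij}}$ vanishes on the coordinate subspace $V$ precisely when $y_{ij}>0$ for some $i\in\{i_1,\ldots,i_u\}$ (otherwise choose $x$ with all coordinates outside $\{i_1,\ldots,i_u\}$ positive and $x^{y_j}>0$).

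The main subtlety — though not really an obstacle — is making rigorous the step ``reduce to a single reaction.'' One has to be careful that the identity $Y A(\k)\, x^Y = 0$ on $V$ holds for \emph{all} $\k\in\R^m_{>0}$, and that $\R^m_{>0}$ is Zariski-dense in $\R^m$, so the identity extends to all $\k\in\R^m$, in particular to the standard basis vectors $\k=e_i$ which isolate the $i$-th reaction; alternatively one differentiates with respect to $\k_i$ at an arbitrary point. Either way one extracts, for each reaction $i$ (say $Y_j\to Y_\ell$), the pointwise identity $(y_\ell-y_j)\, x^{y_j}=0$ on $V$. I expect the rest to be routine: the passage from a monomial vanishing on a coordinate subspace to a divisibility/support condition on its exponent vector is elementary, and the handling of the zero complex $0$ (if present) is automatic since $x^{0}=1$ never vanishes, so $0$ can never be a reactant complex of a network admitting an LTC set — consistent with the remark that networks with inflow reactions have no LTC species sets.
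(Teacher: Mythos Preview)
Your argument is correct and in fact more elementary than the paper's. Both proofs hinge on the assumption that reaction networks have no self-edges (equivalently, $N$ has no zero columns), but they exploit it differently. The paper groups reactions by their reactant complex via an auxiliary matrix $K_\k$ with $N\diag(\k)x^B=NK_\k x^{Y^*}$, uses linear independence of the distinct monomials $x^{y_j}$ to force certain columns of $NK_\k$ to vanish, and then argues that the corresponding columns of $K_\k$ (as $\k$ varies) sweep out a coordinate subspace in $\ker N$, whence $N$ has a zero column. Your approach instead uses linearity of $h(x,\k)=N\diag(\k)x^B$ in $\k$ to isolate a single reaction (extending from $\R^m_{>0}$ to all of $\R^m$ is trivial by linearity, so your Zariski-density remark is even stronger than needed), immediately yielding $(y_\ell-y_j)x^{y_j}\equiv 0$ on $V$ for each reaction $Y_j\to Y_\ell$, and then $x^{y_j}\equiv 0$ since $y_\ell\neq y_j$. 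Your route is shorter and avoids introducing $K_\k$; the paper's route has the mild advantage of making explicit the role of the reactant-complex structure $Y^*$, but this is not essential to the logic. The preliminary reduction to ``all complexes are reactant'' in your proposal is harmless but unnecessary: working directly with the form $N\diag(\k)x^B$ (as you effectively do) already handles non-reactant complexes automatically.
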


\begin{proof}  The non-trivial assertion is the ``only if'' part. The ``if" part follows by definition. 
We need to show that if $h(x,\k)=N \diag(\k) x^B  =0$ for all $\kappa\in\mathbb R^m_{>0}$ whenever $ x_{i_1}=\cdots = x_{i_u}=0$, then also $x^{Y^*}=0$.
We may assume that the LTC index set is $\{1,\ldots,u\}$, and that complexes are ordered such that the first $d^*$ are reactant complexes. We let $y_{1},\ldots,y_{d^*}$ denote the columns of $Y^*$. 

We argue by contradiction and assume that some $x^{y_i}$, $i\in\{1,\ldots,d^*\}$, is non-zero when $x_1=\cdots=x_u=0$.  For $x_1=\cdots=x_u=0$ and $ i=1,\ldots,d^*$, we have
\begin{equation}\label{ueq}
x^{y_i}\not=0\quad\iff\quad y_i=\begin{pmatrix} 0 \\\ast\end{pmatrix},\quad\text{with}\quad 0 \in \mathbb R^u.
\end{equation}
We may assume that \eqref{ueq} holds precisely for the indices $d'\le i\le d^*$, for some  $d'\leq d^*$. 
Thus, we aim to show $d'=d^*$.

Let $K_\k$ be the $(m\times d^*)$-matrix with non-negative entries such that
\[ K_\k\, x^{Y^*} = \diag(\k) x^B,\qquad \textrm{hence}\qquad N \diag(\k) x^B = N K_\k\, x^{Y^*}.\]
Each entry of $K_\k$ is one of the rate parameters: the $(i,j)$-th entry is $\k_{\ell j}$ if the $i$-th reaction is $Y_j\rightarrow Y_\ell$. As $y_i$, $i=1,\ldots,d^*$, are pairwise different, the monomials $x^{y_i}$, $d'\le i \le d^*$, are linearly independent over $\mathbb R$. Using $N K_\k\, x^{Y^*}=0$, we obtain
\[
N K_\k \begin{pmatrix}0\\ \vdots\\0\\x^{y_{d'}}\\ \vdots \\ x^{y_{d^*}}\end{pmatrix}=0\qquad\Rightarrow \qquad N K_\k=\begin{pmatrix} \ast &\cdots&\ast&0&\cdots & 0\end{pmatrix},
\]
with the last $d^*-d'+1\geq 1$ columns equal to zero.   The equality tells us that the last $d^*-d'+1$ columns of $K_\k$ belong to $\ker(N)$ for all $\k\in \R^m_{>0}$.
The sum of these columns lies also in $\ker(N)$. The entries of the sum are positive when they correspond to reactions with $Y_{d'},\dots,Y_{d^*}$ among the reactant species, and zero otherwise. As $\k$ varies, we thus obtain a relatively open and non-empty subset in some proper coordinate subspace $C$ of $\R^m$. The row space of $N$ is therefore orthogonal to $C$; hence $N$ has at least one zero column, and we have reached a contradiction, as a reaction network does not have self-edges. 
\end{proof}

\begin{corollary}\label{ltcspecrem}
 LTC species sets are identifiable from the reactant complexes: A set of species $\{X_{i_1},\ldots,X_{i_u}\}$ is an LTC species set if and only if in every reactant complex, one of the $X_{i_k}$ appears with positive coefficient.
\end{corollary}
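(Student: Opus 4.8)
\textbf{Proof proposal for Corollary~\ref{ltcspecrem}.} The plan is to show that this is essentially an immediate translation of Proposition~\ref{scaprop} into the language of complexes and stoichiometric coefficients. Recall that Proposition~\ref{scaprop} says that $\{i_1,\ldots,i_u\}$ is an LTC index set (for all $\kappa\in\mathbb R^m_{>0}$) precisely when $x^{Y^*}=0$ whenever $x_{i_1}=\cdots=x_{i_u}=0$. Since $x^{Y^*}=(x^{y_1},\ldots,x^{y_{d^*}})$ runs over the monomials attached to the reactant complexes $Y_1,\ldots,Y_{d^*}$, the condition $x^{Y^*}=0$ on the coordinate subspace $x_{i_1}=\cdots=x_{i_u}=0$ holds if and only if \emph{every} such monomial $x^{y_j}$ vanishes there.

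The key step, then, is to analyse when a single monomial $x^{y_j}=\prod_{i=1}^n x_i^{(y_j)_i}$ vanishes identically on the coordinate subspace $\{x_{i_1}=\cdots=x_{i_u}=0\}$. I would argue: such a monomial vanishes on that subspace if and only if at least one of the variables $x_{i_1},\ldots,x_{i_u}$ actually occurs in it, i.e.\ if and only if $(y_j)_{i_k}>0$ for some $k$. (The ``if'' direction is obvious; the ``only if'' direction follows by evaluating at a point with $x_i>0$ for all $i\notin\{i_1,\ldots,i_u\}$ and $x_{i_1}=\cdots=x_{i_u}=0$, at which the monomial is nonzero unless one of the vanishing coordinates appears.) In terms of the chemical description, $(y_j)_{i_k}>0$ means exactly that the species $X_{i_k}$ appears with positive coefficient in the reactant complex $Y_j$. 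Combining, the LTC condition is: for each reactant complex $Y_j$ there exists $k\in\{1,\ldots,u\}$ with $X_{i_k}$ appearing in $Y_j$ with positive coefficient — which is precisely the assertion of the corollary.

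I expect no serious obstacle here; the corollary is a restatement of Proposition~\ref{scaprop} once one unpacks what it means for the vector of reactant monomials to vanish on a coordinate subspace. The only point requiring a sentence of care is the elementary fact that a monomial is not identically zero on a coordinate subspace unless one of the ``killed'' coordinates divides it — this needs the mild observation that the monomials in question involve only the variables $x_1,\ldots,x_n$ and that the coordinate subspace is a genuine product of lines and points, so one can choose the surviving coordinates to be strictly positive. It is also worth recording, as in the remark preceding the corollary, that the statement presupposes there are no inflow reactions (the zero complex is never a reactant complex contributing a nonzero constant monomial), since otherwise no LTC species set exists at all; this is already built into the hypothesis that we are applying Proposition~\ref{scaprop}.
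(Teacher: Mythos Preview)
Your proposal is correct and matches the paper's intent: the corollary is stated in the paper without proof, as an immediate consequence of Proposition~\ref{scaprop}, and your unpacking of when each reactant monomial $x^{y_j}$ vanishes on the coordinate subspace is precisely the (omitted) argument. One small remark: your caveat about inflow reactions is not strictly needed, since if the zero complex is a reactant complex then the condition ``one of the $X_{i_k}$ appears with positive coefficient'' fails automatically, so the corollary already correctly records that no LTC species set exists in that case.
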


 An enumeration of all LTC species sets may start from those reactant complexes that contain the fewest species (that is, the  species appearing with positive stoichiometric coefficients). First, a species that appears alone in some reactant complex is necessarily contained in every LTC species set. Then proceed with complexes containing two species, and so on. From this  observation, one also finds that LTC species sets for first order reaction networks (with every complex consisting of one species) are comprised of all species in reactant complexes. Hence, the notions of LTC species and LTC variables are  of real interest only for non-linear systems.

\begin{example}\label{mmdisguiseex2} In Example
\ref{mmdisguiseex}, the reversible Michaelis-Menten reaction network, the reactant complexes are $X_1+X_2$, $X_3$ and $X_4+X_2$. Thus $X_3$ must lie in every LTC species set, and so must $X_2$ or $X_1$.
The first alternative yields the LTC species set $\{X_2,X_3\}$, while the second yields the LTC  species set $\{X_1,X_3, X_4\}$.  These are the only two LTC species sets.
In contrast, the standard irreversible Michaelis-Menten reaction network without the reaction $X_4+X_2\ce{->[\k_{-2}]}X_3$ has reactant complexes $X_1+X_2$ and  $X_3$, with two LTC species sets, $\{X_1,X_2\}$ and $\{X_2,X_3\}$.
\end{example}
 
\begin{example}\label{futileex2}
We consider again the futile cycle with one phosphorylation site, see Example \ref{futileex}:
\[
X_1 + X_3 \cee{<=>[\k_1][\k_2]} X_5 \cee{->[\k_3]} X_1+X_4,\qquad 
X_2 + X_4 \cee{<=>[\k_4][\k_5]} X_6 \cee{->[\k_6]} X_2+X_3.
\]
Here, $X_5$ and $X_6$ are contained in every LTC species set, and altogether one finds the following LCT species sets,
\[
\{X_1,X_2,X_5,X_6\}, \quad\{X_1,X_4,X_5,X_6\},\quad\{X_2,X_3,X_5,X_6\},\quad \{X_3,X_4,X_5,X_6\}.
\]
Only the first and the last of these are also LTC species sets for the fully reversible system with the additional reactions $X_1+X_4\ce{->[\k_7]}X_5$ and $X_2+X_3\ce{->[\k_8]}X_6$.
\end{example} 

\subsection{LTC species and first integrals}
 
 We proceed to study the relation between LTC species sets and linear first integrals. 
 We first note a relation between LTC indices and the complex matrix. 

\begin{lemma}\label{ltcsmlem}
Let $\{i_1,\ldots,i_u\}$ with $u<n$ and $1\leq i_1<i_2\cdots<i_u\leq n$. 
Then the following statements are equivalent.
\begin{enumerate}[(a)]
\item $\{i_1,\ldots,i_u\}$ is an LTC index set.
\item The support of every column of $Y^*$ contains some $i_k$.
\item There exists a non-negative row vector $\omega \in \N^n_{0}$ with support $\{i_1,\ldots,i_u\}$ and such that every entry of $\omega \cdot Y^*$ is positive.
\end{enumerate}
\end{lemma}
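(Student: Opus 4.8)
\textbf{Proof plan for Lemma~\ref{ltcsmlem}.}

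The plan is to prove the three-way equivalence by establishing $(a)\Leftrightarrow(b)$ and $(b)\Leftrightarrow(c)$, since $(b)$ is the most concrete combinatorial condition and will serve as the hub. For $(a)\Leftrightarrow(b)$, I would invoke Proposition~\ref{scaprop} and Corollary~\ref{ltcspecrem}: being an LTC index set for all $\kappa\in\mathbb R^m_{>0}$ is equivalent to $x^{Y^*}=0$ whenever $x_{i_1}=\cdots=x_{i_u}=0$, and this vanishing of the vector of monomials is, entry by entry, exactly the statement that each column $y_j$ of $Y^*$ has some coordinate $i_k$ with $(y_j)_{i_k}>0$ — i.e. the support of each column of $Y^*$ meets $\{i_1,\ldots,i_u\}$. (If one prefers to argue directly: a monomial $x^{y_j}$ vanishes on the coordinate subspace $\{x_{i_1}=\cdots=x_{i_u}=0\}$ iff it contains one of the variables $x_{i_1},\ldots,x_{i_u}$, which is condition $(b)$ for that column.) This direction is essentially a restatement and should be short.

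For $(b)\Rightarrow(c)$, I would construct $\omega$ explicitly: take $\omega$ to be the indicator row vector of $\{i_1,\ldots,i_u\}$, i.e. $\omega_i=1$ if $i\in\{i_1,\ldots,i_u\}$ and $\omega_i=0$ otherwise. Then $\omega$ is a non-negative integer vector with support exactly $\{i_1,\ldots,i_u\}$, and for any column $y_j$ of $Y^*$ the entry $(\omega\cdot Y^*)_j = \sum_{k=1}^u (y_j)_{i_k}$ is a sum of non-negative integers that, by $(b)$, contains at least one positive term, hence is positive. For the converse $(c)\Rightarrow(b)$: given such an $\omega$ with support $\{i_1,\ldots,i_u\}$ and $\omega\cdot Y^*$ entrywise positive, fix a column $y_j$; then $\sum_{i\in\{i_1,\ldots,i_u\}}\omega_i (y_j)_i>0$, so some term is positive, forcing $(y_j)_{i_k}>0$ for that $i_k$ — hence the support of $y_j$ meets $\{i_1,\ldots,i_u\}$, which is $(b)$.

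I do not anticipate a genuine obstacle here; the lemma is a packaging result that reformulates the characterization of LTC species already obtained in Proposition~\ref{scaprop} and Corollary~\ref{ltcspecrem} in linear-algebraic terms. The only point requiring a little care is the quantifier in $(a)$ — "LTC index set" in the statement should be read as "for all $\kappa\in\mathbb R^m_{>0}$", matching the hypothesis under which Proposition~\ref{scaprop} gives the monomial-vanishing characterization; with that reading, the equivalence $(a)\Leftrightarrow(b)$ is immediate, and $(b)\Leftrightarrow(c)$ is the elementary observation that a non-negative integer combination of the rows indexed by $\{i_1,\ldots,i_u\}$ is entrywise positive precisely when every column of $Y^*$ has a nonzero entry in one of those rows, the indicator vector being the universal witness.
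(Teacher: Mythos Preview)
Your proposal is correct and follows essentially the same approach as the paper: the equivalence $(a)\Leftrightarrow(b)$ is obtained directly from Corollary~\ref{ltcspecrem}, and $(b)\Leftrightarrow(c)$ is the elementary observation that $(\omega\cdot Y^*)_j=\sum_{k=1}^u\omega_{i_k}y_{i_k,j}$ is positive iff the support of the $j$-th column of $Y^*$ meets $\{i_1,\ldots,i_u\}$. The only minor difference is that the paper phrases $(b)\Leftrightarrow(c)$ as a single biconditional computation rather than splitting into two directions with an explicit indicator witness, but the content is identical.
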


\begin{proof} 
The equivalence of (a) and (b) is a restatement of Corollary \ref{ltcspecrem}. As for the equivalence of (b) and (c), note that 
\[
\omega \cdot Y^*=\left(\sum_{i=1}^{n} \omega_i\, y_{i,  1},\ldots,\sum_{i=1}^n \omega_i\, y_{i, d^*}\right)=\left(\sum_{k=1}^u \omega_{i_k}\, y_{{i_k},  1},\ldots,\sum_{k=1}^u \omega_{i_k}\, y_{{i_k} , d^*}\right). \]
Thus, the  $j$-th entry of $\omega\cdot Y^*$ is positive if and only if  $y_{{i_\ell},j}>0$ for some $i_\ell$.
As the $(i,j)$-entry of $Y^*$ is the stoichiometric  coefficient of $X_i$ in the complex $Y_j$, 
we have that $(\omega\cdot Y^*)_j>0$ if and only if the support of the $j$-th column of $Y^*$  intersects $\{i_1,\ldots,i_u\}$. 
The assertion follows.
\end{proof}

As a consequence of Lemma~\ref{ltcsmlem}, one finds that the support of certain stoichiometric first integrals consists of LTC indices.

\begin{proposition}\label{scacor}
Let $G$ be a mass-action reaction network {with $r$ connected components}, such that each connected component has one terminal strongly connected component. Assume that there exists a linear first integral $\phi=\sum_{i=1}^n \alpha_ix_i\not=0$, with non-negative integer coefficients.
\begin{enumerate}[(a)]
\item One has $\left(\alpha_1,\ldots,\alpha_n\right)\cdot Y\in \ker A(\k)$, and therefore, with the notation of Lemma~\ref{lem:Laplacian}, 
\[
\left(\alpha_1,\ldots,\alpha_n\right)\cdot Y=\sum_{i=1}^r \ell_i \left(0,\ldots,0,e^{(i)},0,\ldots,0\right), \qquad  \ell_i\in \N_{0}.
\]
\item 
If $\ell_i\neq   0$ for {all} $i=1,\ldots,r$, then the indices $i_1,\ldots,i_u$ in the support $\supp(\phi)$  form an LTC index set whenever $u<n$.
\end{enumerate}
\end{proposition}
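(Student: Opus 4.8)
The plan is to connect the algebraic identity in part (a) with the combinatorial characterization of LTC index sets from Lemma~\ref{ltcsmlem}(c), via the left-kernel structure of the Laplacian recorded in Lemma~\ref{lem:Laplacian}(c).

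For part (a): the hypothesis that $\phi=\sum_i\alpha_i x_i$ is a first integral means $\left(\alpha_1,\ldots,\alpha_n\right)\cdot Y A(\k)=0$ for all $\k\in\R^m_{>0}$; since each connected component has exactly one terminal strongly connected component, Lemma~\ref{lemma:invariant} tells us $\phi$ is stoichiometric, but more directly the defining relation says precisely that the row vector $\left(\alpha_1,\ldots,\alpha_n\right)\cdot Y$ lies in the left-kernel of $A(\k)$. By Lemma~\ref{lem:Laplacian}(c), under the one-terminal-component hypothesis that left-kernel is spanned by the block indicator vectors $\left(0,\ldots,0,e^{(i)},0,\ldots,0\right)$, so $\left(\alpha_1,\ldots,\alpha_n\right)\cdot Y=\sum_{i=1}^r \ell_i\left(0,\ldots,0,e^{(i)},0,\ldots,0\right)$ for scalars $\ell_i$. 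That the $\ell_i$ may be taken in $\N_0$ follows because the $\alpha_i$ and the entries of $Y$ are non-negative integers, so each coordinate of $\left(\alpha_1,\ldots,\alpha_n\right)\cdot Y$ is a non-negative integer, and that coordinate (for a complex in component $i$) equals $\ell_i$.

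For part (b): I would apply Lemma~\ref{ltcsmlem}, specifically the equivalence of (a) and (c), with the candidate weight vector $\omega=\left(\alpha_1,\ldots,\alpha_n\right)$, which by assumption is a non-negative integer row vector whose support is exactly $\{i_1,\ldots,i_u\}=\supp(\phi)$. I need every entry of $\omega\cdot Y^*$ to be positive. Now $\omega\cdot Y^*$ is obtained from $\omega\cdot Y$ by deleting the columns corresponding to non-reactant complexes, so it suffices to show every entry of $\omega\cdot Y$ indexed by a \emph{reactant} complex is positive — and in fact, from part (a), the entry of $\omega\cdot Y$ corresponding to any complex $Y_j$ lying in connected component $i$ is $\ell_i$. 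If $\ell_i\neq 0$ for all $i=1,\ldots,r$, then since $\ell_i\in\N_0$ we have $\ell_i\geq 1$, so every entry of $\omega\cdot Y$, in particular every entry of $\omega\cdot Y^*$, is $\geq 1>0$. By Lemma~\ref{ltcsmlem}(c)$\Rightarrow$(a), $\{i_1,\ldots,i_u\}$ is an LTC index set, provided $u<n$, which is the stated hypothesis.

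The main subtlety — rather than a genuine obstacle — is bookkeeping the relabelling: Lemma~\ref{lem:Laplacian}(c) and the block decomposition of $A(\k)$ presuppose the complexes are ordered by connected component, and I should note at the outset that this ordering is harmless (it merely permutes columns of $Y$ and rows/columns of $A(\k)$ simultaneously and does not affect which complexes are reactant complexes). A second point worth making explicit is why the coordinate of $\left(\alpha_1,\ldots,\alpha_n\right)\cdot Y$ at a complex $Y_j$ in component $i$ is exactly $\ell_i$ and not merely a multiple of it: this is immediate because $e^{(i)}$ has all entries equal to $1$, so in the sum $\sum_i\ell_i(0,\ldots,e^{(i)},\ldots,0)$ only the $i$-th term contributes to a coordinate in block $i$, and it contributes $\ell_i\cdot 1$. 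Everything else is routine.
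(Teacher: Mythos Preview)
Your argument is correct and follows essentially the same route as the paper: for (a) you land on $(\alpha_1,\ldots,\alpha_n)\cdot YA(\k)=0$ and then invoke Lemma~\ref{lem:Laplacian}(c), and for (b) you spell out exactly the application of Lemma~\ref{ltcsmlem}(c)$\Rightarrow$(a) that the paper compresses to ``follows directly from Lemma~\ref{ltcsmlem}.'' The only presentational difference is that the paper obtains $(\alpha_1,\ldots,\alpha_n)\cdot YA(\k)=0$ from the linear independence of the monomials $x^{y_j}$ (since the complexes are pairwise distinct), whereas you reach it via Lemma~\ref{lemma:invariant}; your phrase ``more directly the defining relation says\ldots'' is slightly misleading, since without one of these two steps the first-integral condition only gives $(\alpha_1,\ldots,\alpha_n)\cdot YA(\k)\,x^Y=0$.
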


\begin{proof}
 (a) Since the complexes  are pairwise different, the monomial entries of $x^Y$ are linearly independent over $\mathbb R$. Therefore
\[
\phi(YA(\kappa) x^Y)=0\quad\text{for all}\quad x\in\R^n_{\ge 0}\qquad\Leftrightarrow\qquad \phi(YA(\kappa))=0.
\]
Now, the statement follows from Lemma~\ref{lem:Laplacian}(c). 
(b) It follows directly from Lemma \ref{ltcsmlem}. 
\end{proof}

\begin{example} 
For the futile cycle from Examples \ref{futileex} and \ref{futileex2},
the linear first integral $\phi_1=x_1+x_2+x_5+x_6$ satisfies
\[ (1,1,0,0,1,1) \cdot Y =  (1,1,1,1,1,1)= (1,1,1,0,0,0)+(0,0,0,1,1,1),\]
where in the second equality  the vector is written as in Proposition~\ref{scacor} with $\ell_1=\ell_2=1$.
Hence, by  Proposition~\ref{scacor}, $\{X_1,X_2,X_5,X_6\}$ is an LTC species set. 
The linear first integral $\phi_2=x_1+x_5$ satisfies
\[ (1,0,0,0,1,0)\cdot  Y =  (1,1,1,0,0,0)= (1,1,1,0,0,0)+0\cdot (0,0,0,1,1,1),\]
and Proposition~\ref{scacor} does not apply. In fact, $\{X_1,X_5\}$ is not an LTC species set. 
The  LTC species set $\{X_1,X_4,X_5,X_6\}$ does not correspond to the support of any linear first integral, but it contains the support of one. 
\end{example}
\color{black}

The next example shows that non-negativity of the coefficients of the stoichiometric first integral in Proposition~\ref{scacor} cannot be discarded in general.

\begin{example} Consider the reversible Michaelis-Menten mass-action  reaction network {in Example~\ref{mmdisguiseex}}, with degradation of the intermediate complex (the reaction $X_3\ce{->[\k_3]}0$), governed by the ODE system,
\begin{align*}
\dot x_1 &= -\k_1x_1x_2 + \k_{-1}x_3  \\
\dot x_2 &= -\k_1x_1x_2 + (\k_{-1}+\k_2)x_3- \k_{-2}x_2x_4\\
\dot x_3 &=\k_1x_1x_2 - (\k_{-1}+\k_2)x_3+ \k_{-2}x_2x_4 -\k_3 x_3\\
\dot x_4 &= \k_2x_3 - \k_{-2}x_2x_4,
\end{align*}
with  $\k_3>0$.
As in the system without degradation (Example \ref{mmdisguiseex2}),  $\{X_2,\,X_3\}$ is an LTC species set, but the only stoichiometric first integral (up to multiples) is $\phi=x_1-x_2+x_4$, due to $(1,\,-1,\,0,\,1)\,Y=0$.  
The set $\{1,2,4\}$ is not an LTC index set. As noted in \cite{lawa}, the example also shows that the scaling approach may yield singular perturbation scenarios which are not directly related to TFPVs (even after restricting to SCCs).
One verifies that scaling $x_2$ and $x_3$ yields a system that admits a singular perturbation reduction to dimension two, with trivial reduced equation.  
\end{example}

\color{black}
\begin{remark}
There remains the question under which conditions the existence of LTC species sets in turn implies the existence of stoichiometric first integrals with corresponding support. We give a characterization for reaction networks with one connected component and one terminal strongly connected component.
Thus, let system \eqref{eq:ODE2} represent such a reaction network.
Assume without loss of generality that $\{X_1,\ldots,X_u\}$ is an LTC species set, and denote by {$\bar{y}_1,\ldots,\bar{y}_u$} the first rows of the complex matrix {$Y$}. By Lemma \ref{lem:Laplacian} the system admits a  stoichiometric first integral if, and only if, $e=(1,\ldots,1)$ is a multiple of some element in the closed convex hull of {$\bar{y}_1,\ldots,\bar{y}_u$}.  (Note that due to Lemma \ref{ltcsmlem}(c), there exist integers $\omega_1>0,\ldots,\omega_u>0$ such that {$\sum_{i=1}^u \omega_i \bar{y}_i> 0$} (coordinate-wise).)
\end{remark}

\subsection{Stoichiometry and TFPVs}\label{sec:LTC-TFPV}
We now address TFPVs of system \eqref{eq:ODE2} versus TFPVs of its restriction to stoichiometric compatibility classes. As seen in Example \ref{mmdisguiseexplus}, the restricted system may admit additional TFPVs. We first fix some notation.

We introduce the abbreviation
\begin{equation}\label{eq:ODE2h}
h(x,\k)=YA(\k)x^Y.
\end{equation}
In the following, we will assume that system \eqref{eq:ODE2h} admits a maximal set of independent stoichiometric first integrals $\phi_1,\ldots,\phi_{s^*}$. Then every SCC is the intersection of $\mathbb R_{\geq 0}^n$ with the common level set
\[
\phi_i(x)=\theta_i,\quad 1\leq i\leq s^*, 
\]
which we abbreviate as $S_\theta$, $ \theta=(\theta_1,\ldots,\theta_{s^*})$.
One may choose $\widehat x\in \mathbb R^{n-s^*}$ with entries from $x_1,\ldots,x_n$, such that {the Jacobian of } $(\widehat x,\, \phi_1(x),\ldots,\phi_{s^*}(x))$ has full rank $n$. 
This yields an equivalent version
\begin{equation}\label{eq:ODEscc}
\dot{\widehat x}=\widehat h(\widehat x,\k,\theta) \quad\text{in}\quad \mathbb R^{n-s^*},
\end{equation}
which for given $\theta$
represents system \eqref{eq:ODE2h} on $S_\theta$. 
We are interested in TFPVs  of the $(n-s^*)$-dimensional system \eqref{eq:ODEscc} for dimension $s>0$. Possible candidates for TFPVs are as follows.
\begin{itemize}
\item TFPVs via ``inheritance'' from \eqref{eq:ODE2h}: If $\widehat\kappa$ is a TFPV of \eqref{eq:ODE2h} for dimension $s>s^*$, then $(\widehat\kappa,\theta)$ is a TFPV of \eqref{eq:ODEscc} for some dimension $>0$ and some $\theta$ whenever a transversality condition is satisfied. This condition is rather weak: It does {\em not} hold only if the sum of the stoichiometric subspace  and the tangent space of $\mathcal V(h(\cdot,\widehat\kappa))$ at every $x_0\in \mathcal V(h(\cdot,\widehat\kappa))$ has dimension $<n-s^*+s$.
\item TFPV candidates from stoichiometric first integrals: Let the setting of Proposition \ref{scacor} be given and assume that the stoichiometric first integral $\phi_\ell$ with non-negative coefficients corresponds to an LTC variable set. If there exists a TFPV $(\widehat\kappa,\widehat\theta)$ with $\theta_\ell=0$, then the critical variety will be a coordinate subspace, and consequently by \cite{gwz3} the singular perturbation reduction will agree with the ``classical'' QSS reduction (in the sense of Subsection \ref{slofasubsec}) for the LTC variables. (We restrict attention to a single first integral here, since we are interested in minimal LTC sets; cf.\  Section \ref{slofasubsec}.)
\end{itemize}

There remains to establish manageable criteria for TFPVs from stoichiometric first integrals. The next result yields conditions for parameter values that are ``almost TFPV''.

\begin{proposition}\label{scctfp}
Let system \eqref{eq:ODE1} be given, and assume that every SCC of this system is compact {(equivalently, the left-kernel of $N$ in \eqref{eq:ODE1} has a vector with all entries positive \cite{benisrael})}.  Moreover assume that there exists a parameter $\widehat\theta\in\R^{s^*}$ such that: 
\begin{enumerate}[(a)]
\item No stationary points in $S_{\widehat\theta}$ are isolated relatively to  $S_{\widehat\theta}$. 
\item For every $\rho>0$, there exists some $\theta$ such that $\lVert \theta-\widehat\theta \rVert<\rho$ and $\dot{\widehat x}=\widehat h(\widehat x, \k,\theta)$ admits an isolated linearly attracting stationary point. (Here $\lVert\cdot\rVert$ denotes some norm.)
\end{enumerate}
Then, $\dot{\widehat x}=\widehat h(\widehat x, \k,\widehat\theta)$ admits a non-isolated stationary point whose Jacobian has only eigenvalues with non-positive real part, and admits zero as an eigenvalue.
\end{proposition}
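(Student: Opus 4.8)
The plan is to argue by a compactness/limiting argument, extracting a limit of the attracting stationary points guaranteed by (b) as $\theta\to\widehat\theta$, and then using upper semicontinuity of the spectral abscissa together with hypothesis (a) to pin down the eigenvalue structure. First I would fix a sequence $\theta^{(k)}\to\widehat\theta$ furnished by (b), so that for each $k$ the system $\dot{\widehat x}=\widehat h(\widehat x,\k,\theta^{(k)})$ has an isolated linearly attracting stationary point $\widehat x^{(k)}\in S_{\theta^{(k)}}$; in the original coordinates this corresponds to a point $x^{(k)}\in S_{\theta^{(k)}}$. Because every SCC is compact and the $S_{\theta^{(k)}}$ accumulate on the compact set $S_{\widehat\theta}$ (the level sets vary continuously and, by the positive left-kernel vector, stay in a bounded region), the sequence $x^{(k)}$ has a subsequence converging to some $x^*\in S_{\widehat\theta}$. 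By continuity of $h$ in $(x,\k)$ and of the constraints, $x^*$ is a stationary point of the system on $S_{\widehat\theta}$, i.e.\ a stationary point of $\dot{\widehat x}=\widehat h(\widehat x,\k,\widehat\theta)$.

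Next I would analyze the Jacobian $D_1\widehat h(\widehat x^{(k)},\k,\theta^{(k)})$. Linear attractivity of $\widehat x^{(k)}$ means all its eigenvalues have strictly negative real part. Passing to the limit, the entries of the Jacobian converge (it is polynomial in $x$ and $\k$ and the coordinate change defining $\widehat x$ is fixed), so by continuity of the roots of the characteristic polynomial the eigenvalues of $D_1\widehat h(\widehat x^*,\k,\widehat\theta)$ all have \emph{non-positive} real part. This already gives one half of the conclusion. It remains to rule out that $x^*$ is an isolated (hence, since all eigenvalues would have to be nonzero, hyperbolically attracting) stationary point of the reduced system on $S_{\widehat\theta}$: but hypothesis (a) states precisely that no stationary point of $\dot{\widehat x}=\widehat h(\widehat x,\k,\widehat\theta)$ in $S_{\widehat\theta}$ is isolated. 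Therefore $x^*$ is non-isolated.

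Finally I would deduce that $0$ is an eigenvalue of $D_1\widehat h(\widehat x^*,\k,\widehat\theta)$. If every eigenvalue had nonzero (hence negative) real part, the Jacobian would be invertible, and by the inverse function theorem applied to $\widehat h(\cdot,\k,\widehat\theta)$ the stationary point $x^*$ would be isolated in $S_{\widehat\theta}$, contradicting the previous paragraph. Hence $D_1\widehat h(\widehat x^*,\k,\widehat\theta)$ is singular, i.e.\ $0$ is an eigenvalue, while all eigenvalues have non-positive real part. This is exactly the assertion.

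The main obstacle I anticipate is the limiting step for the stationary points: one must ensure the points $\widehat x^{(k)}$ do not escape to the boundary of the relevant orthant or fail to converge. Compactness of the SCCs (equivalently the positivity of a left-kernel vector of $N$, as noted in the statement) is what secures boundedness, and the continuous dependence of $S_\theta$ on $\theta$ secures that a limit point lies in $S_{\widehat\theta}$; making this precise — in particular checking that the coordinate chart $\widehat x$ behaves well in the limit and that no mass leaks out — is the delicate part, whereas the spectral semicontinuity and the inverse-function-theorem step are routine.
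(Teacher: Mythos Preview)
Your argument is correct and essentially identical to the paper's proof: both extract a convergent (sub)sequence of the attracting equilibria using compactness of the union of SCCs over a compact parameter neighborhood, then use continuity of the characteristic polynomial coefficients to conclude non-positive real parts, and finally invoke hypothesis~(a) together with the inverse/implicit function theorem to force a zero eigenvalue. The only cosmetic difference is that the paper phrases the compactness step via a compact neighborhood $K\ni\widehat\theta$ and argues the positive-real-part exclusion by contraposition, whereas you pass directly to a limit of roots; the content is the same.
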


\begin{proof} Given a compact subset $K$ of the parameter space, the union of the SCCs $S_\theta$ with $\theta\in K$ is compact. In the following, let $K$ be a compact neighborhood of $S_{\widehat\theta}$.

For every positive integer {$L$} let $\theta_L\in K$ be such that $\lVert\theta_L-\widehat\theta\rVert<1/L$ and $\dot{\widehat x}=\widehat h(\widehat x, \k,\theta_L)$ admits an isolated linearly attracting stationary point $\widehat z_L$. By compactness, the sequence $(\widehat z_L)_L$ in $\mathbb{R}^{n-s^*}$ has an accumulation point $\widehat z$, in  $S_{\widehat \theta}$. Since $\widehat z$ is not isolated, the Jacobian of $D_1\widehat h(\widehat z,\k,\widehat\theta)$ has the eigenvalue zero.
Moreover, the map which sends $(\widehat x,\k,\theta)$ to the coefficients of the characteristic polynomial
\begin{equation}\label{scccharpol}
\widehat\chi_{(\widehat x,\k,\theta)}{(\tau)}=\tau^{n-s^*}+\widehat \sigma_1(\widehat x,\k,\theta) \tau^{n-s^*-1}+\cdots+\widehat\sigma_{n-s^*}(\widehat x,\k,\theta)
\end{equation}
of $D_1\widehat h(\widehat x,\k,\theta)$ is continuous.
Thus, if some eigenvalue of the Jacobian had positive real part, the same would hold for some eigenvalue of the Jacobian of $D_1\widehat h(\widehat z_L,\k,\theta_L)$ with $L$ sufficiently large (see e.g. the reasoning in Gantmacher \cite{Gant}, Ch. V, section 3).
\end{proof} 

\begin{corollary}\label{sccdef0}
Assume that system \eqref{eq:ODEscc}  describes the dynamics of a weakly reversible deficiency zero reaction network. Then, the conclusion of Proposition \ref{scctfp} holds for every $\widehat\theta$, such that no stationary points are isolated in $S_{\widehat\theta}$.
\end{corollary}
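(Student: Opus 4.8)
The plan is to verify hypothesis~(b) of Proposition~\ref{scctfp} and then invoke that proposition directly. Hypothesis~(a) is exactly the condition imposed on $\widehat\theta$, and (assuming, as in Proposition~\ref{scctfp}, that the SCCs are compact) it only remains to produce, for every $\rho>0$, a parameter $\theta$ with $\lVert\theta-\widehat\theta\rVert<\rho$ for which $\dot{\widehat x}=\widehat h(\widehat x,\k,\theta)$ admits an isolated, linearly attracting stationary point.

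The nearby parameters will come from a density observation. Write $L\colon\R^n\to\R^{s^*}$, $L(x)=(\phi_1(x),\ldots,\phi_{s^*}(x))$; this linear map is surjective since $\phi_1,\ldots,\phi_{s^*}$ are independent. By construction $S_\theta\neq\emptyset$ if and only if $\theta\in L(\R^n_{\geq 0})$, and $S_\theta\cap\R^n_{>0}\neq\emptyset$ if and only if $\theta\in L(\R^n_{>0})$. As $\R^n_{>0}$ is dense in $\R^n_{\geq 0}$ and $L$ is continuous, $L(\R^n_{>0})$ is dense in $L(\R^n_{\geq 0})$; moreover $\widehat\theta\in L(\R^n_{\geq 0})$, because $S_{\widehat\theta}$ carries stationary points and is in particular non-empty. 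Hence every neighbourhood of $\widehat\theta$ contains some $\theta$ with $S_\theta\cap\R^n_{>0}\neq\emptyset$.

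Now fix such a $\theta$. Since $G$ is weakly reversible of deficiency zero it is complex balanced for every $\k$ (cf.\ the proof of Theorem~\ref{cbthm}), so by Proposition~\ref{prop:cb} the SCC $S_\theta$ contains exactly one equilibrium $z_\theta$, it lies in $\R^n_{>0}$, and $D_1 h(z_\theta,\k)$ has $n-s^*$ eigenvalues with negative real part and $s^*$ zero eigenvalues counted with multiplicity. The coordinate change $x\mapsto(\widehat x,L(x))$ underlying \eqref{eq:ODEscc} is a linear isomorphism, so in these coordinates the Jacobian of the full system at $z_\theta$ is similar to $D_1 h(z_\theta,\k)$ and is block triangular with diagonal blocks $D_1\widehat h(\widehat z_\theta,\k,\theta)$ and the zero $(s^*\times s^*)$ matrix; its spectrum is therefore that of $D_1\widehat h(\widehat z_\theta,\k,\theta)$ together with $s^*$ zeros. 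Cancelling the $s^*$ zeros forced by the conservation laws, all $n-s^*$ eigenvalues of $D_1\widehat h(\widehat z_\theta,\k,\theta)$ have negative real part; in particular this Jacobian is invertible, so $\widehat z_\theta$ is an isolated stationary point of \eqref{eq:ODEscc}, and it is linearly attracting. This establishes hypothesis~(b), and Proposition~\ref{scctfp} yields the claim.

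The only delicate point is the last spectral bookkeeping: one must be sure that restricting to the SCC removes precisely the $s^*$ zero eigenvalues associated with the stoichiometric first integrals, leaving the remaining spectrum, together with its sign pattern, untouched. This is immediate from the block-triangular form, but it is exactly here that the hypothesis ``weakly reversible of deficiency zero'' does the work, since via Proposition~\ref{prop:cb} it is what forces the zero eigenvalue of the full Jacobian at the positive equilibrium to have multiplicity exactly $s^*$.
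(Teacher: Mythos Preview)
Your proof is correct and follows precisely the route the paper has in mind: the corollary is stated without proof, and the intended argument is simply to verify hypothesis~(b) of Proposition~\ref{scctfp} using the complex-balancing theory recorded in Proposition~\ref{prop:cb}, which is exactly what you do. Your density step for the $\theta$-parameters and the block-triangular spectral bookkeeping make explicit the details the paper suppresses.

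Two small remarks. First, Proposition~\ref{prop:cb} asserts that each SCC contains precisely one \emph{positive} equilibrium, not precisely one equilibrium; boundary equilibria may exist. This does not affect your argument, since the eigenvalue count already gives that $D_1\widehat h(\widehat z_\theta,\k,\theta)$ is invertible and hence $\widehat z_\theta$ is isolated, but the phrasing ``exactly one equilibrium $z_\theta$'' is a slight overstatement. Second, your parenthetical about compactness of the SCCs is well placed: weak reversibility with deficiency zero does not by itself force a strictly positive conservation law (consider $0\rightleftharpoons X_1$ adjoined to $X_2\rightleftharpoons X_3$), so compactness is indeed an inherited hypothesis from Proposition~\ref{scctfp} rather than a consequence of the network assumptions.
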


\begin{remark} 
We clarify here what is meant by ``almost TFPV'' prior to the statement of  Proposition \ref{scctfp}. For this we discuss the conditions for TFPV in {Definition~\ref{def:TFPV} in} Subsection \ref{tfpvsubsec} for system \eqref{eq:ODEscc} and parameter value $\widehat\theta$.
\begin{itemize}
\item Condition (i) is always satisfied for some dimension $>0$, due to condition (a) in Proposition~\ref{scctfp}.
\item Condition (ii) requires equality of geometric and algebraic multiplicity for the eigenvalue $0$. This holds automatically when the algebraic multiplicity is equal to one. Generally this {property} can be checked by algebraic methods: For $\widehat x$ in the critical manifold, $\tau$ divides the characteristic polynomial  in \eqref{scccharpol}. {Obtain a new polynomial $\eta$ from $\widehat\chi_{(\widehat x,\k,\theta)}$  by} dividing out a power of $\tau$ such that a single factor $\tau$ remains. Then the multiplicity equals one if and only if $\eta$ annihilates $D_1\widehat h(\widehat x,\k,\widehat\theta)$.
But (as mentioned e.g.\ in \cite[Example 4]{gw2}) there exist realistic reaction networks for which the direct sum condition on the kernel and the image does not hold. 
\item Finally, to guarantee condition (iii), one needs to verify that there exist no purely imaginary eigenvalues except $0$. 
However,  if (iii) is not satisfied, then the system may admit some interesting dynamics, like zero-Hopf bifurcations. 
\end{itemize}
\end{remark}

We note a sharper result for the case of a one dimensional critical variety.

\begin{corollary}\label{corscctfp} In the setting of Proposition \ref{scctfp}, consider system \eqref{eq:ODEscc}, with characteristic polynomial of the Jacobian given by \eqref{scccharpol}. Let $\widehat \theta$ be such that $\widehat\sigma_{n-s^*}=0$ and $\widehat\sigma_{n-s^*-1}\not=0$ for $\theta=\widehat\theta$, some {$\kappa\in\R^m_{>0}$} and some stationary point $\widehat z\in{\R^{n-s^*}}_{\geq 0}$. Then it holds:
\begin{enumerate}[(a)]
\item The eigenvalue $0$ of $D_1\widehat h(\widehat z,\k,\widehat\theta)$ has multiplicity one.
\item There exists a polynomial $\Phi$ in $n-s^*-1$ variables with the following property: The Jacobian $D_1\widehat h(\widehat z,\k,\widehat\theta)$ admits non-zero purely imaginary eigenvalues if and only if $\Phi(\widehat\sigma_1,\ldots,\widehat\sigma_{n-s^*-1})=0$ at $(\widehat z,\k,\widehat\theta)$.
\end{enumerate}

\end{corollary}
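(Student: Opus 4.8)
The plan is to analyze the characteristic polynomial $\widehat\chi_{(\widehat x,\k,\theta)}(\tau)$ at the degenerate parameter value $\widehat\theta$, using the hypotheses $\widehat\sigma_{n-s^*}=0$ and $\widehat\sigma_{n-s^*-1}\neq 0$. For part (a), since $\widehat\sigma_{n-s^*}(\widehat z,\k,\widehat\theta)=0$, the constant term of the characteristic polynomial vanishes, so $\tau$ divides $\widehat\chi$; write $\widehat\chi(\tau)=\tau\cdot\eta(\tau)$ with $\eta(\tau)=\tau^{n-s^*-1}+\widehat\sigma_1\tau^{n-s^*-2}+\cdots+\widehat\sigma_{n-s^*-1}$. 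The condition $\widehat\sigma_{n-s^*-1}\neq 0$ says precisely that $\eta(0)\neq 0$, so $\tau=0$ is a simple root of $\widehat\chi$. This immediately gives that the algebraic multiplicity of the eigenvalue $0$ of $D_1\widehat h(\widehat z,\k,\widehat\theta)$ is one, and hence (geometric multiplicity being at least one and at most the algebraic multiplicity) the multiplicity is exactly one. This also incidentally confirms condition (ii) of Definition~\ref{def:TFPV} automatically in this setting.

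For part (b), I would work with the polynomial $\eta(\tau)$ of degree $n-s^*-1$, whose roots are exactly the non-zero eigenvalues of $D_1\widehat h(\widehat z,\k,\widehat\theta)$ (counted with multiplicity), by part (a). The question of whether this matrix has non-zero purely imaginary eigenvalues is then the question of whether $\eta$ has a root on the imaginary axis. This is a classical elimination problem: a real polynomial $p(\tau)=\tau^{k}+c_1\tau^{k-1}+\cdots+c_k$ has a purely imaginary root $\tau=i\omega$ with $\omega\neq 0$ if and only if its real and imaginary parts (as functions of $\omega$) have a common positive root, equivalently if and only if a certain resultant (or, equivalently, the appropriate subresultant / critical Hurwitz-type determinant $\Delta_{k-1}$ in the coefficients $c_1,\dots,c_k$) vanishes. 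Concretely, I would invoke the standard fact (see Gantmacher \cite{Gant}, Ch.~V) that the next-to-last Hurwitz determinant $\Phi:=\Delta_{k-1}(c_1,\dots,c_{k})$ of $p$ vanishes exactly when $p$ has a pair of roots $\pm i\omega$ with $\omega\in\R$ (here using that $\eta(0)\neq 0$ rules out the degenerate case $\omega=0$). Setting $k=n-s^*-1$ and $c_j=\widehat\sigma_j$ for $j=1,\dots,n-s^*-1$, noting that $c_k=\widehat\sigma_{n-s^*-1}$ appears among these, one obtains a polynomial $\Phi$ in the $n-s^*-1$ quantities $\widehat\sigma_1,\dots,\widehat\sigma_{n-s^*-1}$ with the claimed property.

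The main obstacle is purely a matter of bookkeeping and of citing the correct classical criterion in its sharp form: I must be careful that $\Phi$ detects a \emph{non-zero} purely imaginary root and nothing spurious. The hypothesis $\widehat\sigma_{n-s^*-1}\neq 0$ is exactly what is needed here, since it guarantees $0$ is not a root of $\eta$, so the vanishing of the relevant Hurwitz determinant cannot be an artifact of a zero eigenvalue; and one checks that the determinant in question is, in the generic (squarefree) case, equivalent to the existence of a conjugate pair $\pm i\omega$ on the imaginary axis. I would also remark that $\Phi$ can be taken to be a universal polynomial depending only on $n-s^*$, independent of the particular network, which is what the statement asserts. The remainder -- substituting the $\widehat\sigma_j$ evaluated at $(\widehat z,\k,\widehat\theta)$ and reading off the equivalence -- is routine.
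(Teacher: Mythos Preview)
Your argument for part (a) is correct and matches the paper's (which simply calls it ``obvious'').

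For part (b) there is a genuine gap. The ``standard fact'' you invoke is not quite what you state: by Orlando's formula, the next-to-last Hurwitz determinant $\Delta_{k-1}$ (equivalently, the resultant-type polynomial you describe) vanishes if and only if $\eta$ has a pair of roots $\lambda,\mu$ with $\lambda+\mu=0$. This is \emph{not} the same as having a purely imaginary pair $\pm i\omega$. A squarefree real polynomial can perfectly well have real roots $r$ and $-r$ (or a quadruple $\pm a\pm bi$) summing to zero without any root on the imaginary axis, so your appeal to the ``generic (squarefree) case'' does not close the equivalence. The hypothesis $\widehat\sigma_{n-s^*-1}\neq 0$ only excludes $0$ as a root of $\eta$; it does nothing to exclude such real or complex pairs.

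What you are missing is precisely the conclusion of Proposition~\ref{scctfp}, which is the ``setting'' the corollary places you in: at $(\widehat z,\kappa,\widehat\theta)$ every eigenvalue of $D_1\widehat h$ has non-positive real part. Hence every root of $\eta$ has real part $\leq 0$, and since $\eta(0)\neq 0$, each root has real part $<0$ or is purely imaginary and non-zero. Under this constraint, two roots summing to zero forces both to have real part zero, i.e.\ to be a pair $\pm i\omega$ with $\omega\neq 0$. This is exactly how the paper argues: it cites a polynomial $\Phi$ (from \cite{kwhopf}) that vanishes iff some pair of eigenvalues adds to zero, and then uses the sign information from Proposition~\ref{scctfp} to upgrade ``sum to zero'' to ``purely imaginary''. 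Once you insert this step, your Hurwitz-determinant formulation is a valid concrete realization of $\Phi$.
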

\begin{proof}  (a) is obvious.  (b) There exists a polynomial $\Phi$ in the coefficients of the characteristic polynomial that vanishes if and only if a pair of (non-zero) eigenvalues adds up to zero; see e.g.\ \cite[Lemma 4.1, Appendix B]{kwhopf}. Since all eigenvalues have real part $\leq 0$, such a pair of eigenvalues must have zero real parts. 
\end{proof}

The non-trivial restrictions on the $\widehat\sigma_i$ in Corollary \ref{corscctfp}(b) suggest that there will be non-zero purely imaginary eigenvalues only in exceptional cases. 
There is an obvious (but less readily applicable) extension of Corollary~\ref{corscctfp} to TFPVs for dimension strictly larger than one, 
with an additional requirement that the geometric and the algebraic multiplicities of the zero eigenvalue are equal in Corollary  \ref{corscctfp}(a), and that Corollary \ref{corscctfp}(b) is left unchanged except for the number of { variables of $\Phi$}.

The polynomial $\Phi$ can be determined explicity. We recall some cases for SCCs of small generic dimension from \cite[Example 1]{kwhopf}.

\begin{remark}\label{remscctfp}
(a) If the SCCs of system \eqref{eq:ODE2} generically have dimension two, and the hypotheses of Proposition \ref{scctfp} are satisfied, then $\widehat\theta$ is a TFPV for dimension one whenever $\widehat \sigma_1\not=0$ at $(\widehat z, \k,\widehat\theta)$,

(b) If the SCCs of system \eqref{eq:ODE2} generically have dimension three, and the hypotheses of Proposition \ref{scctfp} are satisfied, then $\widehat\theta$ is a TFPV  for dimension one whenever  $\widehat \sigma_1\not=0$ and $\widehat \sigma_2\not=0$ at $(\widehat z,\k, \widehat\theta)$.

(c) If the SCCs of system \eqref{eq:ODE2} generically have dimension four, and the hypotheses of Proposition \ref{scctfp} are satisfied, then $\widehat\theta$ is a TFPV  for dimension one whenever  $\widehat \sigma_3\not=0$ and $\widehat\sigma_1\widehat \sigma_2\not=\widehat\sigma_3$ at $(\widehat z, \k,\widehat\theta)$.
\end{remark}

\begin{example}
Consider the reversible competitive inhibition reaction network in Example \ref{compinex1}, with 
the initial conditions $x_1(0)=s_0,\,x_2(0)=e_0,\,x_5(0)=i_0$ and $x_3(0)=x_4(0)=x_6(0)=0$,
and  $x_2+x_3+x_6=e_0$, $x_1+x_3+x_4=s_0$, $x_5+x_6=i_0$. Here, Corollary \ref{sccdef0} is applicable, with the critical parameter value $\widehat\theta$ having $e_0=0$, and  all other parameters being positive.

The dynamics on an SCC is described by
 the ODE system, 
\begin{align*}
{\dot x}_1 &=- \k_1(e_0-x_3-x_6)x_1+\k_{-1}x_3,   \\
{\dot x}_3&= \k_1(e_0-x_3-x_6)x_1-(\k_{-1}+\k_2)x_3 
                +\k_{-2}(e_0-x_3-x_6)(s_0-x_1-x_3), \\ 
{\dot x}_6&= \k_3 (e_0-x_3-x_6) (i_0-x_6)     - \k_{-3} x_6.
\end{align*}
 The Jacobian on the SCC with $e_0=0$ (thus, on the critical manifold with $x_3=x_6=0$) is equal to:
{\small \[
\begin{pmatrix}0 & \k_1x_1+\k_{-1} & \k_1x_1\\
                    0  & -(\k_1x_1+\k_{-1}+\k_2+\k_{-2}(s_0-x_1)) & -(\k_1x_1+\k_{-2}(s_0-x_1)) \\
                       0& -\k_3i_0 &  -(\k_3i_0+\k_{-3}) \end{pmatrix},
\]}
and the coefficients of its characteristic polynomial are 
\begin{align*}
\widehat\sigma_1&= i_0\k_3+\k_{-3}+\k_2+\k_{-1}+\k_1x_1+\k_{-2}(s_0-x_1),  \\
\widehat\sigma_2&=\k_3i_0(\k_2+\k_{-1}) + \k_{-3}(\k_1x_1+\k_{-1}+\k_2) + \k_{-2}\k_{-3} (s_0-x_1),\\
\widehat\sigma_3&=0.
\end{align*}
Since both $\widehat\sigma_1$ and $\widehat\sigma_2$ are positive when $0\leq x_1\leq s_0$,  the conditions in Remark \ref{remscctfp} and Corollary \ref{sccdef0} are satisfied, and $\widehat\theta$ is a TFPV for dimension one.
This system was discussed by elementary means in \cite{gswz}, with no reference to reaction network theory. A comparison shows that the approach developed here saves substantial computational effort. Moreover, one verifies that Proposition \ref{scctfp} is also applicable to the system with irreversible product formation (that is, $\k_{-2}=0$).

\end{example}

\subsection{Partial scalings: An outlook}
So far we considered on the one hand TFPVs that, in reaction network interpretation, arise from ``switching off'' certain reactions (Theorems \ref{thm_redCB} and \ref{cbthm}). On the other hand, we introduced LTC species sets, with characterizing property that if their concentrations are zero, then all reactions of the reaction network are precluded from taking place, and discussed their relation to TFPVs (Proposition \ref{scctfp} and Corollary \ref{corscctfp}). It is suggestive to combine these approaches by switching off certain reactions and determining LTC species for the remaining reactant complexes, and this combination will be sketched next. In the setting of Subsection \ref{slofasubsec} and in particular expansion \eqref{tayloreps}, we  consider LTC variable sets for a specific choice of the parameter {$\pi^*$ in \eqref{tayloreps}, for reaction networks $\pi^*=\widetilde \k$}. This yields a slow-fast system which may further admit a Tikhonov-Fenichel reduction. We will not attempt to establish necessary or sufficient conditions for a singular perturbation setting.

We start again from \eqref{eq:ODE2ac}, but now we consider some $\widetilde \k$ such that $A(\widetilde\k)$ has zero columns, thus there are additional non-reactant complexes  in the reaction network $G(\widetilde\k)$. We may assume that the remaining reactant complexes correspond to columns $y_1,\ldots,y_{\widetilde d}$ of $Y^*$, thus 
\[
A^*(\widetilde\k)=\begin{pmatrix} *&\cdots&*&0&\cdots&0\\
                                                  \vdots &&\vdots&\vdots&&\vdots\\
                                                   *&\cdots&*&0&\cdots&0\end{pmatrix}.
\]
The matrices of this type define some coordinate subspace of parameter space.  We denote by $Y_1$ the matrix with columns $y_1,\ldots,y_{\widetilde d}$, and by $Y_2$, the matrix with the remaining columns of $Y^*$. 
LTC variable sets for $G(\widetilde\k)$ can be identified via Proposition \ref{scaprop} with the complex matrix $Y_1$.

Upon relabelling, we may assume that $x_1,\ldots,x_u$ form an  LTC variable set for $G(\widetilde\k)$. 
Considering a curve $\varepsilon\mapsto \widetilde\k+\varepsilon\rho+\ldots$ in parameter space, we obtain
\[
A^*(\widetilde\k+\varepsilon\rho)=\begin{pmatrix} A_{11}+\varepsilon\cdots & \varepsilon A_{12}^*+\varepsilon^2\cdots\\
                                                                             A_{21}+\varepsilon\cdots & \varepsilon A_{22}^*+\varepsilon^2\cdots\end{pmatrix}
\]
with $A_{11}\in\mathbb R^{u\times \widetilde d}$. Moreover, set $x_i=\varepsilon x_i^*$ for $1\leq i\leq u$, then we have
\[
x^{y_j}=\varepsilon^{y_{1j}+\cdots+y_{uj}}{x_1^*}^{y_{1j}}\cdots {x_u^*}^{y_{uj}}\cdot x_{u+1}^{y_{u+1,j}}\cdots x_n^{y_{nj}},
\]
noting that the exponent of $\varepsilon$ is positive for all $j\leq \widetilde d$.
Abbreviating $w_1(x)=x^{Y_1}$, and $w_2(x)=x^{Y_2}$, one obtains an expansion
\[
\begin{array}{rcl}
w_1(\varepsilon x_1^*,\ldots,\varepsilon x_u^*,x_{u+1},\ldots,x_n)&=&\varepsilon w_1^*(x_1^*,\ldots,x_u^*,x_{u+1},\ldots,x_n)+\varepsilon^2\cdots,\\
w_2(\varepsilon x_1^*,\ldots,\varepsilon x_u^*,x_{u+1},\ldots,x_n)&=&w_2^*(x_1^*,\ldots,x_u^*,x_{u+1},\ldots,x_n) + \varepsilon\cdots,
\end{array}
\]
and altogether we arrive at the slow-fast system,
\begin{equation*}
\frac{d}{dt}\begin{pmatrix} x_1^*\\ \vdots\\ x_u^*\\x_{u+1}\\ \vdots\\ x_n\end{pmatrix}=\begin{pmatrix} A_{11}&A_{12}^*\\ \varepsilon A_{21}&\varepsilon A_{22}^*\end{pmatrix}\cdot \begin{pmatrix} w_1^*(x_1^*,\ldots,x_u^*,x_{u+1},\ldots,x_n) \\ w_2^*(x_1^*,\ldots,x_u^*,x_{u+1},\ldots,x_n)\end{pmatrix}+\begin{pmatrix} \varepsilon \cdots\\ \varepsilon^2\cdots\end{pmatrix}.
\end{equation*}

One would arrive at the same slow-fast system by starting from a different vantage point: First designate LTC variables and then switch off all reactions whose source complexes do not contain these variables.

Since the fast part of the scaled system involves slow reactions corresponding to $A_{12}^*$, the results from the previous subsections do not carry over to partial scalings. We will not discuss these matters any further here.

To close the present paper, we are satisfied to indicate by example that this heuristic is worth pursuing for general reaction networks.

\begin{example}
We continue the Michaelis-Menten reaction network from Example \ref{mmdisguiseex}, rewriting \eqref{mmcrn} in the form \eqref{eq:ODE2} with
\[
Y=\begin{pmatrix} 1&0&0\\ 1&0&1\\0&1&0\\ 0&0&1\end{pmatrix},\quad A(\k)=\begin{pmatrix} -\k_1 & \k_{-1}& 0\\   \k_1 &-( \k_{-1}+\k_2)& \k_{-2}\\  0 & \k_2& -\k_{-2}\end{pmatrix}, \quad x^Y=\begin{pmatrix}x_1x_2\\ x_3\\ x_2x_4\end{pmatrix}.
\]

{If $\widetilde{\k}$ is such that $\{x_3\}$ is an LTC variable set for $G(\widetilde{\k})$, then we need $\k_1=\k_{-2}=0$ at $\varepsilon=0$ (slow formation of the intermediate complex from both sides). Then for the curve in parameter space $\widetilde{\k} + \epsilon \k^* = ( \epsilon \k_1^*, \k_{-1} , \k_2 , \epsilon \k_{-1}^*)$,   }
\[
A(\k)=\begin{pmatrix} 0 & \k_{-1}& 0\\   0 &-( \k_{-1}+\k_2)& 0\\  0 & \k_2& 0\end{pmatrix}+\varepsilon \begin{pmatrix} -\k_1^* & 0& 0\\   \k_1^* &0 & \k_{-2}^*\\  0 & 0& -\k_{-2}^*\end{pmatrix},
\]
and scaling $x_3=\varepsilon x_3^*$ yields
\begin{align*}
\dot x_1&=\varepsilon (-\k_1^*x_1x_2+\k_{-1}x_3^*), \\
\dot x_2&= \varepsilon(-\k_1^*x_1x_2+\k_{-1}cx_3^*+\k_2x_3 ^*- \k_{-2^*}x_2x_4),\\
\dot x_3^*&=\k_1^*x_1x_2-\k_{-1}x_3^* - \k_2x_3^*+ \k_{-2}^*x_2x_4,\\
\dot x_4&=\varepsilon(\k_2x_3^* - \k_{-2}^*x_2x_4).\\
\end{align*}

Here, Tikhonov's theorem is directly applicable, with the reduced system admitting the first integrals $\phi_1=x_2$ and $\phi_2=x_1+x_4$. Thus, we end up with a one-dimensional equation (see \cite{gwz}).

Designating the LTC variable set $\{x_2\}$ forces  $\k_{-1}=\k_2=0$ at $\varepsilon=0$ (slow degradation of the intermediate complex in both directions). Proceeding as before, one has
\[
A(\k)=\begin{pmatrix} -\k_1 & 0& 0\\   \k_1 &0& \k_{-2}\\  0 & 0& -\k_{-2}\end{pmatrix}+\varepsilon \begin{pmatrix} 0 & \k_{-1}^*& 0\\   0 &-( \k_{-1}^*+\k_2^*)& 0\\  0 & \k_2^*& 0\end{pmatrix}.
\]
Scaling $x_2=\varepsilon x_2^*$, one obtains
\begin{align*}
\dot x_1&=\varepsilon (-\k_1x_2^*x_1+\k_{-1}^*x_3), \\
\dot x_2^*&= -\k_1x_2^*x_1+\k_{-1}^*cx_3+\k_2^*x_3- \k_{-2}x_2^*x_4,\\
\dot x_3&=\varepsilon( \k_1x_2^*x_1-\k_{-1}^*x_3 -\k_2^*x_3+ \k_{-2}x_2^*x_4),\\
\dot x_4&=\varepsilon(\k_2^*x_3- \k_{-2}x_2^*x_4),
\end{align*}
for which, again, Tikhonov's theorem is directly applicable. The reduced system admits the first integrals $\phi_1=x_3$ and $\phi_2=x_1+x_4$. Thus, again one arrives at a reduction to dimension one; see \cite{gwz} for details. 

{Hence, as noted earlier, for} reversible Michaelis-Menten, by the approaches in the present paper we have obtained all TFPVs that were determined algorithmically in  \cite{gwz} for this system.
\end{example}

\bigskip

\noindent{\bf Acknowledgements.} SW acknowledges support  by the bilateral project ANR-17-CE40-0036 and DFG-391322026 SYMBIONT. EF  acknowledges support from the Independent Research Fund  Denmark, and by the Novo Nordisk Foundation (Denmark), grant NNF18OC0052483. CW acknowledges support from the Novo Nordisk Foundation (Denmark), grant NNF19OC0058354. 
 
{\small

}
\end{document}